\documentclass[10pt]{article}
\usepackage{indentfirst, latexsym, amsmath,bm,amssymb}
\usepackage{tikz}
\usepackage{color,soul}
\usepackage{amsmath}
\usepackage{amssymb}
\usepackage{mathrsfs}
\usepackage{mathtools}
\usepackage{stmaryrd}
\usepackage{bbold}

\allowdisplaybreaks[4]

\begin{document}
\newtheorem{claim}{Claim}
\newtheorem{fact}{Fact}
\newtheorem{theorem}{Theorem}[section]
\newtheorem{corollary}[theorem]{Corollary}
\newtheorem{definition}[theorem]{Definition}
\newtheorem{conjecture}[theorem]{Conjecture}
\newtheorem{question}[theorem]{Question}
\newtheorem{lemma}[theorem]{Lemma}
\newtheorem{remark}[theorem]{Remark}
\newtheorem{proposition}[theorem]{Proposition}
\newtheorem{example}[theorem]{Example}
\newenvironment{proof}{\noindent {\bf
Proof.}}{\rule{3mm}{3mm}\par\medskip}
\newenvironment{proofof}{\noindent {\bf
Proof of  Theorem}}{\rule{2mm}{2mm}\par\medskip}
\newcommand{\pp}{{\it p.}}
\newcommand{\de}{\em}

\title{  {Tur\'{a}n extremal graphs vs. Signless Laplacian spectral Tur\'{a}n extremal graphs
}\thanks{
This work is  supported by NSFC (Nos. 12461063, 12101166, 12371349, 12371354, 12471331), NSFC-BRFFR (No. 12311530761), Hainan Provincial Natural Science Foundation
of China (No. 123MS005) %Shanghai University Young Faculty Grant  %(No. B3A010023045014)
and
Youth Foundation of Shanghai University of International Business and Economics (No. 24QN010).
The authors   are listed in alphabetical order.
E-mail addresses: mzchen@hainanu.edu.cn (M.-Z. Chen), yaleijin@shnu.edu.cn ($^\dag$Y.-L. Jin, corresponding author), zpengli0626@163.com ($^\dag$P.-L. Zhang, corresponding author), xiaodong@sjtu.edu.cn($^\dag$X.-D. Zhang).}}
 %fenglh@163.com (L. Feng),  wjliu6210@126.com(W. Liu).
\date{}
\author{
Ming-Zhu Chen$^b$, Ya-Lei Jin$^a\dag$, Peng-Li Zhang$^c\dag,$ Xiao-Dong Zhang$^d$\\
{\small $^a$ Department of Mathematics, Shanghai Normal University, Shanghai 200234, P.R. China
} \\
{\small  $^b$ School of Mathematics and Statistics, Hainan University,
Haikou 570228, P.R. China}\\
{\small $^c$ School of Statistics and Data Science, Shanghai University of International Business and} \\
{\small   Economics, Shanghai 201620, P.R. China}\\
{\small  $^d$ School of Mathematical Sciences,  MOE-LSC, SHL-MAC, Shanghai Jiao Tong University,}\\ {\small  Shanghai 200240, PR China}
}

\maketitle

\vspace{-0.5cm}

\begin{abstract}
%Let $F$ be a graph with chromatic number $\chi(F)=r+1$ and $Ex(n,F)$ be the set of $n$-vertex $F$-free graphs with maximum number of edges, where a graph is called $F$-free if it does not contain $F$ as a  subgraph.  Let   $1 \gg\epsilon >0,$ $r\geqslant 2$ and $\mathcal{G}_{n}$ be the collection of all $n$-vertex $F$-free graphs with
%minimum degree more than $(1-\frac{1}{r}-\epsilon)n.$
%where the chromatic number of $F$ is $r+1$.
%Employing Szemer\'{e}di's graph regularity lemma and F\"{u}redi stability theorem, we  characterize the structure of graphs in $\mathcal{G}_{n}$. As its applications, if $ex(n,F)=t_r(n)+O(1),$  $n$ is sufficiently large and $r\geqslant 3$, then $Ex_{ssp}(n,F)\subseteq Ex(n,F)$, where $Ex_{ssp}(n,F)$ is the set of $n$-vertex $F$-free  graphs with maximum signless Laplacian spectral radius.

Let $F$ be a graph with chromatic number $\chi(F) = r+1$. Denote by $ex(n, F)$  and $Ex(n, F)$  the Tur\'{a}n number and  the set of all extremal graphs for $F$, respectively. In addition,  $ex_{ssp}(n, F)$  and $Ex_{ssp}(n, F)$ are the maximum signless Laplacian spectral radius of all $n$-vertex  $F$-free graphs and
the set of all $n$-vertex  $F$-free graphs  with signless Laplacian spectral radius  $ex_{ssp}(n, F)$, respectively.  It is known that $Ex_{ssp}(n, F)\supset Ex(n, F)$ if $F$ is a triangle. In this paper, employing %Szemer\'{e}di's graph
the regularity method and F\"{u}redi's stability theorem,
we prove that for a given graph $F$ and $r\geqslant 3$, if $ex(n, F) = t_r(n)+O(1)$, then $ Ex_{ssp}(n, F) \subseteq Ex(n, F)$ for sufficiently large $n$, where $t_r(n)$ is the number of  edges in  the Tur\'{a}n graph $T_r(n)$.
\end{abstract}

{AMS Classification: 05C50, 05C35}

{{\bf Keywords:}
Regularity method;
%Szemer\'{e}di's graph regularity lemma;
F\"{u}redi's stability theorem;
 Tur\'{an} number; Spectral radius; Extremal graph}
\section{Introduction}
%Denote by  $T_{r}(n)$  the {\it Tur\'{a}n graph}, which is .
Given a graph $F$, a graph is called $F$-free if it does not contain $F$ as a subgraph.
The classic Tur\'{a}n-type problem asks what is the maximum number of edges in  an $n$-vertex $F$-free  graph, where the maximum number of edges is called the {\it Tur\'{a}n number} of $F$,  denoted by $ex(n,F).$ The $n$-vertex $F$-free  graph with $ex(n,F)$ edges is called the {\it extremal graph} for $F$ and the set of all extremal graphs on $n$ vertices for $F$  is denoted by $Ex(n,F).$
In 1941, Tur\'{a}n~\cite{Turan 1941} proved  Tur\'{a}n theorem, which states that the Tur\'{a}n graph $ T_r(n),$ the balanced complete $r$-partite graph on $n$ vertices, is the unique extremal graph for $K_{r+1}$.
Denote by $t_{r}(n)$ the number of edges in $ T_r(n).$
The celebrated Erd\H{o}s-Stone-Simonovits theorem \cite{EM1966,ES1946}
states $$ex(n, F) = \bigg(1 - \frac{1}{r}\bigg)\frac{n^2}{2} +o(n^2),$$ where $\chi(F)=r+1$  is the chromatic number of $F$.
%There are many results on  Tur\'{a}n-type problems, see~\cite{}.
A graph is color-critical if it contains an edge whose deletion reduces its chromatic number. %There are many color-critical graphs, for example, complete  graphs, odd cycles and even wheels.
For any color-critical graph $F$ with chromatic number $\chi(F)=r+1$, Erd\H{o}s-Stone-Simonovits theorem shows that $ex(n, F)=(1-\frac{1}{r})\frac{n^2}{2} +o(n^2).$ Later, Simonovits~\cite{Simonovits1968} gave the exact Tur\'{a}n number of $F$ and proved that  $T_r(n)$ is the unique extremal graph for large $n$. %which is called chromatic critical edge theorem  by F\"{u}redi and Gunderson \cite{FG2015}.

\begin{theorem} [Simonovits, \cite{Simonovits1968}]\label{thm1} Let $F$ be a color-critical graph with $\chi(F)=r+1$. Then
there exists a number $n_0(F)$ such that $T_r(n)$ is the only extremal graph with respect to
$ex(n, F)$ provided $n \geqslant  n_0(F)$.
\end{theorem}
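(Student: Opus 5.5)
We prove Theorem~\ref{thm1} by the stability method: first show that an extremal graph is close to $T_r(n)$, then clean it up, then use the critical edge of $F$ to rule out any edge inside a class. Throughout, let $h=|V(F)|$, let $G\in Ex(n,F)$, and note that $e(G)\geqslant t_r(n)$ because $T_r(n)$ is $r$-colourable and hence $F$-free.

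\emph{Step 1 (stability and minimum degree).} By the Erd\H{o}s--Stone--Simonovits theorem together with the Erd\H{o}s--Simonovits stability theorem, for every $\epsilon>0$ and large $n$, $G$ can be turned into $T_r(n)$ by changing at most $\epsilon n^2$ edges. Next we pass to large minimum degree by the standard deletion argument. While some vertex has degree less than $(1-\frac1r-\delta)n$, delete it. Each deletion raises $e(G)-t_r(|V|)$ by about $\delta n$, and this surplus cannot exceed $O(\epsilon n^2)$ without contradicting Erd\H{o}s--Stone. Hence only $o(n)$ vertices are deleted, and we obtain an induced subgraph $G'$ on $n'=n-o(n)$ vertices with $\delta(G')\geqslant(1-\frac1r-\delta)n'$ and $e(G')\geqslant t_r(n')$.

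\emph{Step 2 (structure of $G'$).} Take a max-cut $r$-partition $V_1,\dots,V_r$ of $G'$. Stability gives $|V_i|=n'/r+o(n)$ and only $o(n^2)$ edges inside the classes. The minimum-degree condition combined with max-cut then yields the following. Each vertex has at most $o(n)$ neighbours in its own class; otherwise it would have at least $\gamma n$ neighbours in every class, and its common neighbourhoods would contain a copy of $K_{r}(h,\dots,h)$, which together with it gives $F$. Moreover, every vertex is adjacent to all but $o(n)$ vertices of each other class.

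\emph{Step 3 (killing in-class edges).} Suppose some edge $uv$ lies inside $V_1$. By Step~2, $u$ and $v$ have at least $n/r-o(n)$ common neighbours in each $V_j$ with $j\geqslant 2$. Greedily choose sets $S_j\subseteq V_j$ of size $h$ inside these common neighbourhoods so that all the $S_j$ are pairwise completely joined; this is possible because each vertex misses only $o(n)$ vertices of the other classes. Since $F$ is colour-critical, $F-e$ is $r$-colourable for some edge $e=xy$. In the $r$-colouring of $F-e$, the endpoints $x,y$ must receive the same colour, since otherwise the colouring would already properly colour $F$. So we may embed $x\mapsto u$, $y\mapsto v$, the rest of that colour class into $V_1$ (and we actually need $h$ further vertices of $V_1$ joined to $S_2\cup\dots\cup S_r$, which we also pick greedily), and the other colour classes into $S_2,\dots,S_r$. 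This produces a copy of $F$, a contradiction. Hence $G'$ is $r$-partite, so $e(G')\leqslant t_r(n')$. Combined with $e(G')\geqslant t_r(n')$, this gives $G'=T_r(n')$.

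\emph{Step 4 (reinserting the deleted vertices).} For a deleted vertex $w$, if it has $\gamma n$ neighbours in every class, then $w$ together with $G'=T_r(n')$ contains $F$, since $F$ is $(r+1)$-partite with one part a single vertex after removing a critical vertex. Actually it suffices to note that $K_1+K_r(h,\dots,h)\supseteq F$. So $w$ has degree at most $(1-\frac1r)n+o(n)$. Redoing the count with this bound shows that the deletion in Step~1 removed no vertices at all, because each deleted vertex would cost edges relative to $T_r$. Hence $G=G'$ is $r$-partite with $t_r(n)$ edges, and therefore $G=T_r(n)$.

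The main obstacle is Step~4, together with the degree bookkeeping in Step~1. A cleaner route is to prove directly that $\delta(G)\geqslant(1-\frac1r)n-O(1)$ by a symmetrization argument. If a vertex $w$ had smaller degree, delete it and clone a vertex of maximum degree. We would need to check that this does not create $F$; that is not automatic, so the fallback is the counting argument above, which compares $e(G)$ with $t_r(n)$ through the deficit of the deleted vertices.
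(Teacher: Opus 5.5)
The paper does not prove this statement. Theorem~\ref{thm1} is quoted from Simonovits as background, so there is no in-paper argument to compare with. Judged on its own, your outline is the standard stability proof, and it goes through once you fix the ordering in Step~2 described below.

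It runs parallel to the paper's Section~3. Theorem~\ref{jin-main lemma} gets its near-Tur\'an partition from regularity plus F\"uredi's theorem (Lemma~\ref{Furedi}), where you use Erd\H{o}s--Simonovits stability plus a max-cut partition. The greedy procedure in Claim~\ref{Claim1} is the same device as your Steps~2--3. The real difference is how minimum degree is obtained. The paper builds it into $\mathcal{G}_n$, or gets it in Corollary~\ref{corollary} from a single deletion using $ex(n-1,F)\leqslant t_r(n-1)+c_0$. That hypothesis is part of what Simonovits's theorem asserts for colour-critical $F$, so you cannot assume it. Your iterated deletion in Step~1 is the correct substitute.

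Step~4 is not an obstacle, but the detour through $K_1+K_r(h,\dots,h)$ does not help. The bound $d(w)\leqslant(1-\frac1r)n+o(n)$ it gives is weaker than the deletion threshold and proves nothing on its own. Use the Step~1 bookkeeping directly:
\begin{itemize}
\item Since $t_r(m)-t_r(m-1)=m-\lceil m/r\rceil>(1-\frac1r)m-1$, deleting a vertex of degree less than $(1-\frac1r-\delta)m$ from $G_m$ raises $e(G_m)-t_r(m)$ by more than $\delta m-1>0$.
\item So if even one vertex were deleted, then $e(G')>t_r(n')$. This contradicts Step~3, which shows $G'$ is $r$-partite.
\item Hence $G=G'$ is $r$-partite with $e(G)\geqslant t_r(n)$, so $G=T_r(n)$.
\end{itemize}
You can drop the symmetrization fallback; cloning a vertex can indeed create $F$ for general $F$.

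There is one ordering issue in Step~2. Embedding $K_r(h,\dots,h)$ into $N(v)$ cannot use the ``Moreover'' clause, since that clause is deduced from it. Argue directly from stability instead:
\begin{itemize}
\item The max-cut partition has at most $\epsilon n^2$ in-class edges.
\item Because $e(G')\geqslant t_r(n')\geqslant\sum_{i<j}|V_i||V_j|$, it also has at most $\epsilon n^2$ missing cross pairs.
\item Hence all but $2\sqrt{\epsilon}\,n$ vertices miss at most $\sqrt{\epsilon}\,n$ vertices of other classes.
\item Choose greedily among these typical neighbours of $v$; this needs $\gamma>(rh+2)\sqrt{\epsilon}$.
\end{itemize}
Likewise, the ``$o(n)$'' terms in Steps~2--3 are really $O(\delta+\gamma+\sqrt{\epsilon})\,n$. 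So fix $\delta,\gamma\ll 1/(r^2h)$ for the greedy choices in Step~3, then choose $\epsilon$. Finally, take the Erd\H{o}s--Stone error in Step~1 below $\delta/2$, so that the deletion process stops while $n'\geqslant n/2$.
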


Let $A(G)$ be the adjacency matrix  and $Q(G)=D(G)+A(G)$ be  the signless Laplacian matrix of a graph $G$, respectively,
where $D(G)$ is the degree diagonal matrix of $G$. The largest eigenvalue of  $A(G)$ (resp. $Q(G)$) is called the  {\it adjacency}  (resp. {\it signless Laplacian}) {\it spectral radius} of $G$,  denoted by $\lambda(G)$ (resp. $q(G)$).
The {\it signless Laplacian} {\it spectral Tur\'{a}n number} of a graph $F$, denoted by $ex_{ssp}(n,F)$, is the maximum  signless Laplacian spectral radius   among all $n$-vertex $F$-free graphs.
Denote by $Ex_{ssp}(n,F)$ the set of all $n$-vertex $F$-free graphs with  the signless Laplacian spectral radius $ex_{ssp}(n,F)$.

%The {\it adjacency} (resp. {\it signless Laplacian}) {\it spectral Tur\'{a}n number} of a graph $F$, denoted by $ex_{sp}(n,F)$ (resp. $ex_{ssp}(n,F)$), is the largest  adjacency (resp. signless Laplacian) spectral radius   of an $n$-vertex $F$-free graph.
%Denote by $Ex_{sp}(n,F)$ (resp. $Ex_{ssp}(n,F)$) be the set of $F$-free graphs with the number of edges $ex_{sp}(n,F)$ (resp. $ex_{ssp}(n,F)$).
In 2010, Nikiforov \cite{Nikiforov2010} proposed the adjacency spectral  Tur\'{a}n-type problem which asks  %for
the maximum  adjacency spectral radius of an $n$-vertex $F$-free graph.
Guiduli \cite{Guiduli1996} and Nikiforov \cite{Nikiforov2007} independently proved adjacency spectral Tur\'{a}n theorem, showing that if $G$ attains the maximum adjacency spectral radius among all $n$-vertex $K_{r+1}$-free  graphs, then $G=T_r(n).$ Nikiforov \cite{Nikiforov2011} proved the adjacency spectral Erd\H{o}s-Stone-Simonovits theorem.
%Denote $(k, t)$-fan by  $F_{k,t},$ the
A graph on $(t-1)k + 1$ vertices consisting of $k$ cliques each with $t$ vertices which
intersect in exactly one common vertex is called a {\it $(k, t)$-fan} and denoted by  $F_{k,t}$.
When $t=3$, $F_{k,3}$ is also called  the  {\it $k$-fan} and denoted by $F_{k}.$
Cioab\v{a}, et al.~\cite{Cioaba Feng Tait Zhang 2020} showed that if $F$ is a $k$-fan $F_{k}$ and $G$ has the maximum adjacency spectral radius among all $n$-vertex $F$-free  graphs, then $G\in  Ex(n,F)$ for sufficiently large $n.$
Desai, et al.~\cite{DKLNTW2022} showed that for $k\geqslant 2$ and  $t\geqslant3,$ if $F$ is a $(k,t)$-fan $F_{k,t}$  and $G$ attains the maximum adjacency spectral radius among all $n$-vertex $F$-free  graphs, then $G\in  Ex(n,F)$ for sufficiently large $n.$
Cioab\v{a}, Desai and Tait~\cite{Cioaba Desai Tait 2022} investigated  the maximum adjacency spectral radius  of an $n$-vertex graph containing no odd wheel  and  posed the following conjecture: For any graph $F$  satisfying that the graphs in $Ex(n,F)$ are Tur\'{a}n graphs plus $O(1)$ edges,
if $G$ attains the maximum adjacency spectral radius among all $n$-vertex $F$-free graphs,  then $G\in  Ex(n,F)$ for  sufficiently large $n.$
%\begin{conjecture}[\cite{Cioaba Desai Tait 2022}]\label{conjecture cioaba}
%Let $F$ be any graph such that the graphs in $Ex(n,F)$ are Tur\'{a}n graphs plus $O(1)$ edges. For
%\end{conjecture}
In 2022, Wang, Kang  and Xue~\cite{Wang Kang Xue 2023}  confirmed this conjecture %~\ref{conjecture cioaba}
 and presented the following stronger result.
\begin{theorem}[\cite{Wang Kang Xue 2023}]\label{theorem of Wang Kang Xue}
Let $r\geqslant 2$ be an integer and $F$ be a graph with $ex(n,F)=t_{r}(n)+O(1)$. For
sufficiently large $n,$  if $G$ attains the maximum adjacency spectral radius among all $n$-vertex $F$-free graphs,  then $G\in  Ex(n,F).$
\end{theorem}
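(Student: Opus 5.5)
The plan follows the by now standard scheme: stability, then structure of a spectral extremal graph $G$, then a local-improvement argument showing $G$ has the maximum number of edges. Let $G$ attain the maximum adjacency spectral radius $\lambda=\lambda(G)$ among $n$-vertex $F$-free graphs, and let $\mathbf{x}$ be its positive unit Perron vector. Since $T_r(n)$ is $F$-free ($\chi(F)=r+1$), we have $\lambda\geqslant \lambda(T_r(n))\geqslant (1-\frac1r)n-O(1/n)$. Because $ex(n,F)=t_r(n)+O(1)$, $F$ is contained in $T_r(n)$ plus a bounded number of edges; in particular $F\subseteq K_{r+1}(t)$ for some $t$, so the adjacency spectral Erd\H{o}s--Stone--Simonovits bound applies. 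Together with $e(G)\leqslant ex(n,F)$, F\"uredi's stability theorem then shows that $G$ can be made $r$-partite by deleting $o(n^2)$ edges.

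\textbf{Step 1 (structure).} Fix a max-cut partition $V_1,\dots,V_r$ of $G$ with $|V_i|=(\frac1r+o(1))n$ and $o(n^2)$ edges inside parts. Let $W$ be the set of vertices with $d(v)\leqslant (1-\frac1r-\epsilon)n$. From the eigenvalue equation and $\lambda\geqslant(1-\frac1r)n-O(1)$, deleting such a vertex and adding a vertex joined to a largest part-complement would increase $\lambda$ while keeping the graph $F$-free. This will show $W=\emptyset$, so $\delta(G)>(1-\frac1r-\epsilon)n$.

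With this minimum degree we use the regularity method: any vertex with $\geqslant \epsilon' n$ neighbours in its own part, combined with the near-complete $r$-partite structure, would give $K_{r+1}(t)$ and hence $F$ by embedding. So every vertex has only $O(1)$ neighbours inside its own part. This also yields $x_v=(1+o(1))\frac{1}{\sqrt n}$ for all $v$.

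\textbf{Step 2 (key bound).} Let $E_{\rm in}$ be the edges inside parts and $E_{\rm miss}$ the non-edges across parts. We will show $|E_{\rm in}|=O(1)$ and $|E_{\rm miss}|=O(1)$ by comparing $\lambda(G)$ with $\lambda(T_r(n))$, using $\lambda\approx \frac{2}{n}\bigl(e(G)\bigr)$ corrected by the nearly uniform $\mathbf x$. Since $e(G)\leqslant t_r(n)+O(1)$, each missing cross edge costs about $2/n$ in $\lambda$, so only $O(1)$ such edges can be absent. Part sizes must then be balanced up to $O(1)$.

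\textbf{Step 3 (conclusion).} Suppose $e(G)<ex(n,F)$, and take $H\in Ex(n,F)$; by the same stability argument $H$ also has the form $T_r(n)$ plus or minus $O(1)$ edges. We then compare $\lambda(G)$ and $\lambda(H)$ via the Rayleigh quotient with refined eigenvector estimates $x_v=\frac{1}{\sqrt n}(1+O(1/n))$. To first order, $\lambda$ differs from $\frac{2e}{n}$ by a quantity that depends on the part sizes and on the $O(1)$ exceptional edges only at order $O(1/n^2)$, whereas one extra edge contributes order $1/n$. Thus $e(G)<e(H)$ would give $\lambda(H)>\lambda(G)$ for large $n$, a contradiction. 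Hence $e(G)=ex(n,F)$, i.e.\ $G\in Ex(n,F)$.

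\textbf{Main obstacle.} I expect the precision in Step 3 to be the hardest part. It requires controlling second-order eigenvector fluctuations, namely $x_u-x_v=O(n^{-3/2})$ for vertices in the same part, so that the $\Theta(1/n)$ gain from a single edge dominates. I would first try to obtain these fluctuations from $\lambda(x_u-x_v)=\sum_{N(u)}x-\sum_{N(v)}x$, using that the symmetric difference of $N(u)$ and $N(v)$ has size $O(1)$.
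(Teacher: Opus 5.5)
The paper gives no proof of this statement; it is quoted from Wang--Kang--Xue. The closest comparison is the paper's proof of the signless Laplacian analogue in Section~4, and your plan follows its skeleton.

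\textbf{The main gap is in Step~3.} You claim that $\lambda-\frac{2e}{n}$ depends on the part sizes only at order $O(n^{-2})$; this is false. Take $K=K_{n_1,\dots,n_r}$ with $S=\sum_i(n_i-\frac{n}{r})^2$ bounded. The equation $\sum_i\frac{n_i}{\lambda+n_i}=1$ gives
\[
\lambda(K)=(1-\tfrac{1}{r})n-(1-\tfrac{1}{r})\frac{S}{n}+O(n^{-2}),\qquad \frac{2e(K)}{n}=(1-\tfrac{1}{r})n-\frac{S}{n},
\]
so $\lambda(K)-\frac{2e(K)}{n}=\frac{S}{rn}+O(n^{-2})$. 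Relative to its edge count, an unbalanced partition therefore earns a spectral bonus of order $1/n$, the same order as a single edge. Combining this with your eigenvector estimates gives
\[
\lambda(G)=\lambda(T_r(n))+\frac{2(e(G)-t_r(n))}{n}+\frac{S_G-S_T}{rn}+O(n^{-2}),
\]
where $S_T$ is the value of $S$ for the Tur\'an partition. Your Step~2 only gives balance up to $O(1)$, so ``$e(G)<e(H)$'' does not by itself give $\lambda(G)<\lambda(H)$.

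You need one more ingredient, and either of two routes works:
\begin{enumerate}
\item[(a)] Prove exact balance $|n_i-n_j|\leqslant 1$ before comparing with $H$. This is exactly what the paper does for $q$ in Lemma~\ref{turan graph}, playing the $\Omega(1/n)$ gap of Lemma~\ref{difference of two type of partitions}(ii) against $O(n^{-2})$ error terms. For $\lambda$ the corresponding gap is $(1-\frac{1}{r})\frac{S_G-S_T}{n}$, which stays positive even for $r=2$, unlike the $q$-gap.
\item[(b)] Use the structural bound $e(G_{in})-e(G_{out})\leqslant c_0$ from Theorem~\ref{jin-main lemma}(ii). It gives $e(H)-e(G)=\bigl(c_0-e(G_{in})+e(G_{out})\bigr)+\frac{S_G-S_T}{2}$, hence
\[
\lambda(H)-\lambda(G)\geqslant\frac{2\bigl(c_0-e(G_{in})+e(G_{out})\bigr)+(1-\frac{1}{r})(S_G-S_T)}{n}-O(n^{-2}).
\]
If $e(G)<e(H)$, the numerator is at least $1$, because $S_G-S_T$ is a nonnegative even integer and $c_0-e(G_{in})+e(G_{out})$ is a nonnegative integer.
\end{enumerate}

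\textbf{Three earlier points also need repair,} though standard tools suffice.

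\emph{Opening stability step.} F\"uredi's theorem (Lemma~\ref{Furedi}) needs a $K_{r+1}$-free graph and a \emph{lower} bound on its edge count. The bound $e(G)\leqslant ex(n,F)$ supplies neither, and $\lambda^2\leqslant 2e(G)$ only gives roughly $e(G)\geqslant(1-\frac{1}{r})^2\frac{n^2}{2}$. You need Nikiforov's spectral stability theorem, or alternatively the removal lemma, then $\lambda^2\leqslant 2(1-\frac{1}{r})e$ for $K_{r+1}$-free graphs, then F\"uredi. The paper sidesteps this for $q$ via Theorem~\ref{theorem of liyongtao}: it reduces to $\mathcal{G}_n$, where the minimum degree supplies the edge count.

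\emph{Vertex replacement in Step~1.} A new vertex joined to all of $V\setminus V_i$ can complete a copy of $F$, through edges inside other parts or through atypical vertices. Join it only to typical vertices, and certify $F$-freeness with the replacement argument of Corollary~\ref{property of edge maximal}.

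\emph{Embedding in Step~1.} A vertex with many neighbours in its own part does not produce $K_{r+1}(t)$. It produces an $(rt+1)$-vertex subgraph with $t_r(rt+1)+t$ edges. That contradicts $ex(rt+1,F)\leqslant t_r(rt+1)+c_0$ once $t>c_0$, as in Claim~\ref{Claim1}.
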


A similar problem  is that  whether the signless Laplacian spectral analogue of Theorem~\ref{theorem of Wang Kang Xue} holds.
However, there exist some counterexamples for $r=2.$ Let $k\geqslant 2.$ A book  is a graph consisting of $k$ triangles sharing one common edge, which is denoted by  $B_{k}.$ When $F$ is a triangle $K_{3},$ an odd cycle $C_{2k+1}$ or a book $B_{k},$ %since $\chi(F)=3$
  $Ex(n,F)=\{T_{2}(n)\}$ for large $n$ by Theorem~\ref{thm1}.
%$Ex(n,K_{3})=\{T_{2}(n)\}$~\cite{Turan 1941} and
%In 2013, He, Jin and Zhang~\cite{He Jin Zhang 2013} proved that all complete bipartite graphs $K_{s,n-s}$ on $n$ vertices are the extremal graphs with respect to $ex_{ssp}(n,K_{3}),$ where $1\leqslant  s  \leqslant  n-1.$
Clearly, $Ex_{ssp}(n, K_{3})\supset Ex(n, K_{3})$ since  $Ex_{ssp}(n,K_{3})=\{K_{s,n-s}:1\leqslant  s  \leqslant  n-1\}$~\cite{He Jin Zhang 2013}.
%which reveals that Theorem~\ref{theorem of Wang Kang Xue} does not hold for signless Laplacian spectral radius if $r=2.$
%By Tur\'{a}n theorem~\cite{Turan 1941},
%we have $Ex(n,K_{3})=\{T_{2}(n)\}.$
%By~\cite{He Jin Zhang 2013}, we have $Ex_{ssp}(n,K_{3})=\{K_{s,n-s}:1\leqslant  s  \leqslant  n-1\},$ which implies that
%In 2013, He, Jin and Zhang~\cite{He Jin Zhang 2013} proved that all complete bipartite graphs $K_{s,n-s}$ on $n$ vertices are the extremal graphs with respect to $ex_{ssp}(n,K_{3}),$ where $1\leqslant  s  \leqslant  n-1.$
%Clearly, $Ex_{ssp}(n, F)\supset Ex(n, F).$
%which reveals that Theorem~\ref{theorem of Wang Kang Xue} does not hold for signless Laplacian spectral radius if $r=2.$
And $Ex_{ssp}(n, C_{2k+1}) \cap Ex(n, C_{2k+1})=\emptyset$ because  $Ex_{ssp}(n,C_{2k+1})=\{K_{k}\vee \overline{K}_{n-k}\}$~\cite{Yuan2014} for large $n$, where $``\vee"$ is the join operation. Also,
$Ex_{ssp}(n, B_{k}) \cap Ex(n, B_{k})=\emptyset$ for large $n$ by Theorem~1.4 in~\cite{Chen Jin Zhang 2025}.
Besides,
for large $n,$ $Ex(n, F_{k})=\{G_{n,k}^{1}\}$ if  $k$ is odd and
$Ex(n, F_{k})=\{G_{n,k}^{2}\}$ if  $k$ is even~\cite{Erdos Furedi Gould Gunderson 1995},
%For $n\geqslant 50k^2,$  Erd\H{o}s, et al.~\cite{Erdos Furedi Gould Gunderson 1995} %F\"{u}redi, Gould and Gunderson
%proved that the graphs $G_{n,k}^{i}(i=1,2)$ are the only extremal graphs with respect to $ex(n, F_{k}),$
where
$G_{n,k}^{1}$ is the graph obtained by taking
%a complete equi-bipartite graph
the  Tur\'{a}n graph $T_{2}(n)$
and embedding two vertex disjoint copies of $K_{k}$ on one side if $k$ is odd (where $n\geqslant 4k-1$), and $G_{n,k}^{2}$ is the graph obtained by taking
%a complete equi-bipartite graph
the Tur\'{a}n graph $T_{2}(n)$
and embedding
a graph with $2k-1$ vertices, $k^2-\frac{3}{2}k$ edges with  maximum degree $k-1$ in one side
%otherwise.
if $k$ is even (where $n\geqslant 4k-3$).
 Clearly,  $Ex_{ssp}(n,F_{k})\cap Ex(n,F_{k})=\emptyset $ since %$n\geqslant  3k^2-k-2,$
 $Ex_{ssp}(n,F_{k})=\{K_{k}\vee \overline{K}_{n-k}\}$~\cite{Zhao Huang Guo2021}  for large $n$.

Let $r\geqslant 3.$
According to   $Ex(n,K_{r+1})=\{T_{r}(n)\}$~\cite{Turan 1941} and $Ex_{ssp}(n,K_{r+1})=\{T_r(n)\}$~\cite{He Jin Zhang 2013},
$Ex_{ssp}(n, K_{r+1})= Ex(n, K_{r+1}).$
%Furthermore,
%He, Jin and Zhang~\cite{He Jin Zhang 2013} proved that
%the Tur\'{a}n graph $ T_r(n)$ is the unique extremal graph with respect to $ex_{ssp}(n,K_{r+1}).$  %for $r\geqslant 3.$
%Chen, Jin and Zhang~\cite{Chen Jin Zhang 2025} determined that the extremal graphs attaining the largest signless Laplacian spectral radius among all book-free graphs of order $n.$
%Let $F$ be a color-critical graph $F$ with $\chi (F)=r+1\geqslant 3$ For $$ Simonovits~\cite{} proved
Furthermore, for any color-critical graph $F$ with chromatic number $r+1$ %$\geqslant 4$
and sufficiently large $n,$
since $Ex(n,F)=\{T_{r}(n)\}$ %for sufficiently large $n$
by Theorem~\ref{thm1} and $Ex_{ssp}(n,F)=\{T_{r}(n)\}$~\cite{Zheng Li Li 2025}, $Ex_{ssp}(n, F)= Ex(n, F)$
 %Zheng, Li and Li
 for sufficiently large $n$.
%showed that the Tur\'{a}n graph $T_{r}(n)$ is the unique extremal graph with respect to $ex_{ssp}(n,F).$
%where $F$ is  any and  if  $G$ is an $n$-vertex $F$-free graph, % of order $n,$
%then $q(G)\leqslant  q(T_{r}(n)),$ with equality if and only if $G=T_{r}(n).$
The above two results indicate that Theorem~\ref{theorem of Wang Kang Xue} may hold for signless Laplacian spectral radius for $r\geqslant3.$

Very recently,
Zheng, Li and Su~\cite{Zheng Li Su 2025} proved a signless Laplacian spectral
version of the Erd\"{o}s-Stone-Simonovits theorem.
Besides,
Desai, et al.~\cite{DKLNTW2022}  proposed a problem as follows: for $k\geqslant 1$ and $t\geqslant 3,$ %if $t\geqslant  3,$
if $n$ is sufficiently large and $G$ has the maximum signless Laplacian spectral radius among all $n$-vertex $F_{k,t}$-free graphs, then $G$ is the complete split graph $K_{k(t-2)}\vee \overline{K}_{n-k(t-2)}.$
%$S_{n,k(t-2)}$ is the join of  a clique on $k(t-2)$ vertices and an independent set on $n-k(t-2)$ vertices.
For more results about the  spectral Tur\'{a}n-type problems, see~\cite{Byrne 2025,Byrne Desai Tait 2024,Fang Tait Zhai 2025,Li Feng Peng 2024,Li Feng Peng 2025,Lou Lu Zhai 2025,Ni Wang Kang 2023,Zheng Li Fan 2025}.
Motivated by the above results,
it is natural to ask  the following general question:
% for $r\geqslant 3$.

\begin{question}\label{question}
Let $r\geqslant 3$ be an integer and $F$ be a graph with $ex(n,F)=t_{r}(n)+O(1)$. Is it true that $Ex_{ssp}(n,F)\subseteq Ex(n,F)$ for sufficiently large $n?$
\end{question}

In this paper, by utilizing
the regularity method %Szemer\'{e}di's graph regularity lemma
 and F\"{u}redi's stability theorem,
%a novel technique,
%Szemer\'{e}di's graph regularity lemma and F\"{u}redi stability theorem,
%two novel and powerful tools,  % technique
%  rather than %~\cite{Wang Kang Xue 2023} the spectral version of the Stability Lemma
%which is totally different from the technique used in proving Theorem~\ref{theorem of Wang Kang Xue},
we provide a positive answer to Question~\ref{question}, %for $r\geqslant 3$,
%which can be seen as an extension of %conjecture
%~\ref{conjecture cioaba} and %the signless Lapalcian spectral version of
%Theorem~\ref{theorem of Wang Kang Xue}.
%the signless Laplacian spectral analogue of Theorem~\ref{theorem of Wang Kang Xue} as follows.

\begin{theorem}\label{main result}
Let $r\geqslant 3$ be an  integer and $F$ be any graph such that  $ex(n,F)=t_{r}(n)+O(1).$
%are obtained from $T_{r}(n)$ by adding $O(1)$ edges.
 %if $G$ has the maximal signless Laplacian spectral radius over all $n$-vertex $F$-free graphs, then
Then $Ex_{ssp}(n,F)\subseteq Ex(n,F)$  for sufficiently large $n.$
\end{theorem}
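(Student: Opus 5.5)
Let $G\in Ex_{ssp}(n,F)$ and write $q=q(G)$, with Perron vector $\mathbf{x}$ normalized so that $\max_v x_v=1$. The plan is to show in stages that $G$ is a Tur\'an graph plus a bounded perturbation, and then that $G$ must have exactly $ex(n,F)$ edges.

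\textbf{Lower bound on $q$.} Since $T_r(n)$ is $F$-free (as $ex(n,F)\ge t_r(n)$ forces $\chi(F)\ge r+1$, and the $O(1)$ term forces $\chi(F)=r+1$), we get
$$q\ge q(T_r(n))\ge 2(1-\tfrac1r)n-O(1/n).$$
Writing $q=\max_{\|\mathbf y\|=1}\sum_{uv\in E}(y_u+y_v)^2$ and using $q\le\max_{uv\in E}(d_u+d_v)$, this forces $e(G)\ge(1-\frac1r)\frac{n^2}{2}-o(n^2)$. By the Erd\H{o}s--Stone--Simonovits theorem together with F\"uredi's stability theorem, $G$ is then $o(n^2)$-close to $T_r(n)$.

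\textbf{Regularity step.} Apply the regularity lemma to get a partition $V=V_1\cup\dots\cup V_r\cup W$ with $|V_i|=(\frac1r+o(1))n$ and $e(G[V_i])=o(n^2)$. I would then show that the set $W$ of vertices of low degree, namely $d(v)\le(1-\frac1r-\epsilon)n$, is small. First $|W|=o(n)$ by counting. To get $W=\emptyset$, compare $G$ with the graph obtained by deleting the edges at $v\in W$ and joining $v$ completely to the $r-1$ largest other parts. Freeness is preserved up to $O(1)$ edges, using that $ex(n,F)=t_r(n)+O(1)$ means $F\subseteq K_{r}(s,\dots,s)$ plus a bounded number of edges, via the Simonovits decomposition family. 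The change in the Rayleigh quotient $\sum_{uv}(x_u+x_v)^2$ is positive, because eigenvector entries are nearly uniform: from $qx_v=d_vx_v+\sum_{u\sim v}x_u$ we get $x_v\approx d_v/(q-d_v)\cdot\bar x$. Once $\delta(G)\ge(1-\frac1r-\epsilon)n$, a standard embedding argument shows that every vertex has $O(1)$ neighbours in its own part, that each $G[V_i]$ has bounded degree, and that, since a large matching inside a part creates $F$, $\sum_i e(G[V_i])=O(1)$. The parts are then forced to be nearly balanced, with $||V_i|-n/r|=O(1)$, because the missing cross edges are $O(1)$ and imbalance costs $\Theta(n)$ in $q$.

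\textbf{Main obstacle: the final comparison.} It remains to prove $e(G)=ex(n,F)$. Suppose $e(G)<ex(n,F)$, and take $H\in Ex(n,F)$, which by the same structure theory is also $T_r(n)$ plus/minus $O(1)$ edges. I would expand both radii as $q=2(1-\frac1r)n+c_0+\frac{c_1}{n}+\cdots$. Here $q(G)$ and $q(H)$ agree up to $O(1/n)$ terms, and the first-order correction is governed by $4(e-t_r(n))/n$ plus corrections from part sizes and degree irregularity, since $\sum(x_u+x_v)^2$ with near-constant $\mathbf x$ counts edges with weight about $4/n$. So I need a precise second-order expansion showing
$$q(G)=q(T_{r}(n_1,\dots,n_r))+\frac{4(e(G)-e(T))}{n}+O(1/n^2),$$
and that the balanced choice dominates. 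This is where $r\ge3$ matters. For $r=2$, unbalanced $K_{s,n-s}$ have equal $q$ (namely $q=n$), so the $\Theta(1)$ part-size penalty vanishes; for $r\ge3$, imbalance costs $\Theta(1)\gg O(1)/n$. Given this, an $F$-free graph with fewer edges than $ex(n,F)$ loses at least $4/n-O(1/n^2)$ against $H$, which contradicts maximality. Hence $G\in Ex(n,F)$. I expect controlling the $O(1/n^2)$ error uniformly, via perturbation of the Perron vector near $T_r(n)$, to be the delicate part.
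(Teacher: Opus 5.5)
Your overall plan (near-Tur\'an structure, $O(1)$ defects inside the parts, balanced parts, then an edge-count comparison) matches the paper's. However, the first step is invalid. From $q\le\max_{uv\in E}(d_u+d_v)$ and $q\ge q(T_r(n))$ you only learn that \emph{some} edge has degree sum about $2(1-\frac1r)n$, and that says nothing about $e(G)$ (think of $K_2\vee\overline{K}_{n-2}$). Your argument never uses $r\geqslant 3$, and for $r=2$ its conclusion is false. Take $F=C_{2k+1}$ with $k\geqslant 2$: then $ex(n,F)=t_2(n)$, yet the $q$-maximizer $K_k\vee\overline{K}_{n-k}$ has $q>n=q(T_2(n))$ and only $O(n)$ edges. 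So you never reach the Erd\H{o}s--Stone--Simonovits/F\"uredi stability regime. Your removal of the low-degree set $W$ therefore has nothing to stand on, since it presupposes the near-Tur\'an partition and near-uniform Perron entries. Moreover, the modified graph must be genuinely $F$-free to contradict maximality, and ``up to $O(1)$ edges'' does not give that. The paper avoids this entirely. It checks (\ref{the first inequality}) and (\ref{the second inequality}) and applies the Zheng--Li--Li reduction, Theorem~\ref{theorem of liyongtao}, to get that every maximizer lies in $\mathcal{G}_n$, i.e.\ has $\delta(G)>(1-\frac1r-\epsilon)n$. The regularity/F\"uredi structure theorem is applied only there, where the edge count comes for free from the minimum degree.

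Your final comparison also rests on wrong orders of magnitude, and this hides a missing ingredient. Your expansion should have no constant term: adding $c_0$ edges moves $q$ by about $4c_0/n$. Likewise, an $O(1)$ imbalance costs neither $\Theta(1)$ nor $\Theta(n)$. For parts of sizes $\frac nr+a_i$ one has $q(K_{n_1,\dots,n_r})\approx q(T_r(n))-\frac{2(r-2)}{r}\cdot\frac{\sum_i a_i^2}{n}$, which is the same order as the edge term. Worse, this penalty is \emph{smaller} than the gain $\frac4n\bigl(t_r(n)-e(K_{n_1,\dots,n_r})\bigr)\approx\frac{2\sum_i a_i^2}{n}$ that an unbalanced skeleton could recoup if all you knew were $e(G)\leqslant ex(n,F)$. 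So ``the balanced choice dominates'' needs the local bound $e(G_{in})-e(G_{out})\leqslant c_0$ relative to $G$'s own partition. The paper proves this in Theorem~\ref{jin-main lemma}(ii) by sampling a balanced vertex set containing every defect vertex. Setting it against the $\frac{2(r-2)}{r^2n}$ gap of Lemma~\ref{difference of two type of partitions}(ii) and $O(1/n^2)$ errors gives Lemma~\ref{turan graph}. The $O(1/n^2)$ control you leave open is Lemma~\ref{the lower bound of minimum component}, $x_{\min}>1-c_3/n$, and it needs $d_{G_{out}}(u)=O(1)$ for \emph{every} vertex. The paper gets this from $F$-saturation of the maximizer (Corollary~\ref{property of edge maximal}: a vertex with no neighbour in its own part is complete to the other parts, so $G_{out}$ lives on $\bigcup_i A_i$ with $|A_i|\leqslant 2c_0$), together with the $O(1)$ part-size bound of Lemma~\ref{the size of each partition}. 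With these in hand, the last step is a single Rayleigh-quotient comparison of $H$ against $G$'s Perron vector on a common $T_r(n)$ skeleton (Lemma~\ref{edge extremal graph}). No second-order expansion of both radii is needed.
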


The remainder of this paper is organized as follows. In Section~2, we provide some necessary notations and auxiliary tools. In Section~3, for $1 \gg\epsilon >0$ and a graph $F$  with $ex(n,F)=t_{r}(n)+O(1),$ %chromatic number  $\chi(F)=r+1\geqslant 3$,
 employing the regularity method %Szemer\'{e}di's graph regularity lemma
 and F\"{u}redi's stability theorem, we  characterize the structure of graphs in $\mathcal{G}_{n}$, where $\mathcal{G}_{n}$ is the collection of all $n$-vertex $F$-free graphs with
minimum degree more than $(1-\frac{1}{r}-\epsilon)n.$ As its applications,
in Section~4, we present the proof of Theorem~\ref{main result} and provide a negative answer to the problem proposed by Desai, et al.~\cite{DKLNTW2022}.

\section{Preliminaries}
We provide some notations which will be used in this paper.
Let $G$ be a simple graph with  vertex set $V(G)$   and  edge set $E(G).$
We call $|V(G)|$ the {\it order} of $G.$ Denote by $e(G)=|E(G)|.$
For two vertices $u,v\in V(G),$ $u$ and $v$ are {\it adjacent} if $uv\in E(G),$ which is denoted by $u\sim v.$
For a vertex $u\in V(G),$ the set $\{v\in V(G)\ | \ u\sim v\}$ is  the {\it neighborhood}  of $u$ in $G,$ which is denoted by $N_{G}(u).$
We call $|N_{G}(u)|$   the {\it degree} of a vertex $u$ in $G,$  which is denoted by $d_{G}(u).$
For brevity, sometimes we  omit  $G$ from  these notations if it is clear from the context.
For two graphs $H$ and  $G,$   $H\subseteq G$ denotes $H$ is a subgraph of $G.$
For two disjoint vertex sets $A,B\subseteq V(G),$  $E(A)$ denotes the subset of edges of $G$ whose end vertices are both in $A,$
$E(A,B)$ denotes the subset of edges of $G$ with one end vertex in $A$ and the other  in $B,$ and $\overline{E(A,B)}$ denotes the set of all missing edges  between $A$ and $B.$ %but not in $E(A,B).$
Denote by $e(A)=|E(A)|$ and $e(A,B)=|E(A,B)|.$
The graph $G[A]$ is called  the {\it induced subgraph} by $A,$ which is a subgraph of $G$ with vertex set $A$ and edge set $E(A).$
Especially, for $u\in V(G),$ $G-u$ denotes the induced subgraph  by $V(G)\setminus\{u\}$
and $d_{A}(u)$ denotes the number of vertices in $A$ adjacent to  $u.$
For two graphs $G$ and $H$ with the same vertex set $V(G)$,  denote by  $G\cup H$ the graph with vertex set $V(G)$ and edge set $E(G)\cup E(H).$
For two vertex disjoint graphs $G$ and $H,$ denote
by $G\vee H$ the graph obtained from $G$ and $H$ by adding all edges between  $V(G)$ and $V(H).$
Let $K_{n_{1},\cdots,n_{r}}$ be the complete $r$-partite graph with parts of sizes $n_{1},\cdots,n_{r}.$
For a graph $G,$ $\overline{G}$ denotes the complement of $G.$
Readers are referred to the book~\cite{Bondy Murty2008} for undefined notations and definitions.

%$A\mathbf{x}=\rho(A)\mathbf{x}$ and
%its maximum component is $1.$
 %satisfying that $A\mathbf{x}=\rho(A)\mathbf{x}.$
%A nonnegative vector
%$\mathbf{x}$ is called {\it Perron vector} if it satisfies $A\mathbf{x}=\rho(A)\mathbf{x}$ and
%its maximum component is $1.$

Let $G$ be a graph and $U,W\subseteq V(G)$. %Denote by $e(U,W)$ the number of edges between $U$ and $W.$
Define the {\it edge density} between $U$ and $W$ in $G$ by $d(U,W)=\frac{e(U,W)}{|U||W|}.$
We call $(U,W)$ an {\it $\epsilon$-regular pair} in $G$ if for all $A\subseteq U$ and $B\subseteq W$ with $|A|\geqslant\epsilon\left|U\right|$ and $|B|\geqslant\epsilon\left|W\right|$, one has
$|d(A,B)-d(U,W)|\leqslant \epsilon.$
A partition $\mathcal{P}=\{V_{1},\cdots,V_{k}\}$ of its vertex set is called {\it an $\epsilon$-regular partition} if
$$\sum_{\substack{(i,j) \in [k]^{2} \\ (V_{i},V_{j}) \text{ not } \epsilon \text{-regular}}}|V_{i}||V_{j}| \leqslant  \epsilon|V(G)|^{2}.$$

Szemer\'{e}di's graph regularity lemma~\cite{Szemerdi1978}  is one of %and graph counting lemma~\cite{}  are
the most powerful tools in extremal graph theory.
There are many important applications of the regularity lemma, we refer the reader
to nice surveys~\cite{Komlos Shokoufandeh  Simonovits Szemeredi 2002,Komlos Simonovits 1996,Rodl Schacht 2010} and other related references.  Many applications of the regularity lemma, including the triangle removal lemma~\cite{Erdos Furedi Gould Gunderson 1995,Fox 2011,Ruzsa  Szemeredi 1978}, rely on also having an associated {\it graph counting lemma}.
%for example, see the surveys for more information~\cite{Komlos Shokoufandeh  Simonovits Szemeredi 2002,Komlos Simonovits 1996}.
This combined application of the regularity lemma and a counting lemma is often referred
to as the {\it regularity method}.
Here we adopt the versions stated by Zhao~\cite[Theorem~2.1.9]{Zhao 2023} and ~\cite[Theorem~2.6.4]{Zhao 2023}.

\begin{theorem}[Szemer\'{e}di's graph regularity lemma, \cite{Zhao 2023}]
For every $\epsilon > 0$, there exists a constant $M$ such that every graph has an $\epsilon$-regular partition into at most $M$ parts.
\end{theorem}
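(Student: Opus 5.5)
The plan is the standard energy-increment argument. For a partition $\mathcal{P}=\{V_1,\dots,V_k\}$ of $V(G)$ with $|V(G)|=n$, define its \emph{energy}
$$q(\mathcal{P})=\sum_{i,j\in[k]}\frac{|V_i||V_j|}{n^2}\,d(V_i,V_j)^2 .$$
Since $0\leqslant d\leqslant 1$, we always have $0\leqslant q(\mathcal{P})\leqslant 1$. The strategy is to show that any partition which is not $\epsilon$-regular has a refinement whose energy is larger by at least $\epsilon^5$ and whose number of parts is bounded by a function of $k$ alone. Iterating from the trivial partition $\{V(G)\}$ then stops after at most $\epsilon^{-5}$ rounds. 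It stops either at an $\epsilon$-regular partition or because the energy would exceed $1$, and the latter is impossible. This yields a bound $M=M(\epsilon)$ independent of $G$.

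The steps, in order, are as follows.

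First, \emph{monotonicity}: if $\mathcal{P}'$ refines $\mathcal{P}$, then $q(\mathcal{P}')\geqslant q(\mathcal{P})$. Fix a pair $(V_i,V_j)$ and view $d(V_i,V_j)$ as the mean of the random variable taking value $d(V_i^a,V_j^b)$ with probability $|V_i^a||V_j^b|/(|V_i||V_j|)$, where $V_i^a$ and $V_j^b$ range over the pieces of $V_i$ and $V_j$ in $\mathcal{P}'$. Cauchy--Schwarz (equivalently $\mathbb{E}X^2\geqslant(\mathbb{E}X)^2$) gives the claim pair by pair. The same computation works for the diagonal pairs $(V_i,V_i)$ that appear in the definition of an $\epsilon$-regular partition.

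Second, the \emph{single-pair increment}: suppose $(U,W)$ is not $\epsilon$-regular, witnessed by $A\subseteq U$ and $B\subseteq W$ with $|A|\geqslant\epsilon|U|$, $|B|\geqslant\epsilon|W|$ and $|d(A,B)-d(U,W)|>\epsilon$. Split $U$ into $\{A,U\setminus A\}$ and $W$ into $\{B,W\setminus B\}$. Using the variance identity $\mathbb{E}X^2-(\mathbb{E}X)^2=\mathrm{Var}(X)$, the energy contribution of this pair increases by at least
$$\frac{|A||B|}{|U||W|}\bigl(d(A,B)-d(U,W)\bigr)^2\cdot\frac{|U||W|}{n^2}\geqslant \epsilon^4\frac{|U||W|}{n^2}.$$

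Third, \emph{simultaneous refinement}: for each irregular pair $(V_i,V_j)$, choose its witnesses $A_{ij}\subseteq V_i$ and $B_{ij}\subseteq V_j$. Let $\mathcal{P}'$ be the common refinement in which each $V_i$ is cut by all the witness sets lying inside it. Each $V_i$ then splits into at most $2^{2k}$ pieces, so $|\mathcal{P}'|\leqslant k\,4^{k}$. By monotonicity, refining further than the two-piece split of the second step does not lose the gain computed there. Summing over the irregular pairs, whose total weight $\sum|V_i||V_j|$ exceeds $\epsilon n^2$, gives
$$q(\mathcal{P}')\geqslant q(\mathcal{P})+\epsilon^4\cdot\epsilon=q(\mathcal{P})+\epsilon^5 .$$

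Finally, define $M$ by iterating $k\mapsto k4^k$ starting from $k=1$, for $\lceil\epsilon^{-5}\rceil$ rounds (a tower-type bound).

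The main obstacle is the bookkeeping in the third step: the increments of different pairs must be obtained simultaneously in one common refinement, not just in the separate two-piece refinements. I would handle this by defining, for each pair, the energy of the pair under the common refinement. I would then apply monotonicity twice for each irregular pair: first from the two-piece split of that pair to the full refinement, and then from the original pair to the two-piece split. The diagonal pairs $(V_i,V_i)$ need the same treatment, with $A$ and $B$ both subsets of $V_i$, so that both witness sets cut $V_i$. No other issues are expected.
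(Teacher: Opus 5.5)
The paper does not prove this statement; it quotes it from Zhao's book (Theorem~2.1.9 there). Your energy-increment argument is correct and is essentially the standard proof given in that source; Zhao splits each part into at most $2^{k+1}$ pieces rather than your $2^{2k}$, which only changes the constants in the tower-type bound. One convention you should make explicit: for overlapping sets, in particular the diagonal pairs $(V_i,V_i)$, $e(U,W)$ must count ordered pairs $(u,w)\in U\times W$ with $uw\in E(G)$, because your monotonicity step needs $e(U,W)=\sum_{a,b}e(U^a,W^b)$ and the paper defines $E(A,B)$ only for disjoint $A,B$.
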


\begin{theorem}[Graph counting lemma, \cite{Zhao 2023}]\label{graph counting lemma}
Let $F$ be a graph with maximum degree $\Delta(F)\geqslant1,$ $\epsilon>0$ and $G$ be a graph. Let $X_{i}\subseteq V(G)$ for each $i\in V(F).$ Suppose that for each $ij\in E(F),$ $(X_{i},X_{j})$ is an $\epsilon$-regular pair with edge density $d(X_{i},X_{j})\geqslant  (\Delta(F)+1)\epsilon^{1/\Delta(F)}.$ If $|X_{i}|\geqslant |V(F)|/\epsilon$ for each $i,$ then there exists such a homomorphism $F\rightarrow G$ that is injective (i.e., an embedding of $F$ as a subgraph).
\end{theorem}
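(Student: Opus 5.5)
We prove the statement by greedy embedding with shrinking candidate sets. Label $V(F)=\{1,\dots,m\}$ with $m=|V(F)|$, write $\Delta=\Delta(F)$, and let $d$ be a common lower bound for the densities $d(X_i,X_j)$, $ij\in E(F)$. We embed the vertices $1,2,\dots,m$ one at a time. Throughout, each not-yet-embedded vertex $j$ keeps a candidate set $C_j\subseteq X_j$, initially $C_j=X_j$. We maintain the invariant that $C_j$ consists of vertices adjacent to the images of all already embedded neighbours of $j$. When vertex $i$ is embedded at $v\in C_i$, we replace $C_j$ by $C_j\cap N(v)$ for every later neighbour $j>i$ of $i$. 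Each $C_j$ is therefore cut at most $\Delta$ times (once per neighbour of $j$), and any choice made within the candidate sets yields a homomorphism. Injectivity is obtained by also forbidding the at most $m-1$ vertices already used.

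\textbf{Key step: one-sided regularity.} Fix $i$, a later neighbour $j$ of $i$, and suppose $|C_j|\geqslant \epsilon|X_j|$. Let $B_{ij}\subseteq C_i$ be the set of vertices $v$ with $|N(v)\cap C_j|<(d-\epsilon)|C_j|$. Then $|B_{ij}|<\epsilon|X_i|$. Indeed, otherwise the pair $(B_{ij},C_j)$ satisfies the size conditions in the definition of an $\epsilon$-regular pair but has $d(B_{ij},C_j)<d-\epsilon\leqslant d(X_i,X_j)-\epsilon$, contradicting $\epsilon$-regularity of $(X_i,X_j)$. So at step $i$ we choose
$$v\in C_i\setminus\Big(\bigcup_{j>i,\ j\sim i}B_{ij}\ \cup\ \{\text{previously used vertices}\}\Big).$$
Every later neighbour's candidate set then shrinks by a factor of at least $d-\epsilon$.

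\textbf{Bookkeeping.} We must check two things.
\begin{enumerate}
\item Whenever the key step is applied, $|C_j|\geqslant\epsilon|X_j|$. When $C_j$ is about to be cut for the $k$-th time ($k\leqslant\Delta$), it has been cut $k-1\leqslant \Delta-1$ times, so $|C_j|\geqslant(d-\epsilon)^{k-1}|X_j|$.
\item The allowed set for $v$ is nonempty. When $i$ is embedded, $|C_i|\geqslant (d-\epsilon)^{\Delta}|X_i|$, while the number of forbidden vertices is less than $\Delta\epsilon|X_i|+m$.
\end{enumerate}
The density hypothesis $d\geqslant(\Delta+1)\epsilon^{1/\Delta}$ gives
$$d-\epsilon\geqslant \Delta\epsilon^{1/\Delta}+(\epsilon^{1/\Delta}-\epsilon)\geqslant\Delta\epsilon^{1/\Delta}\geqslant\epsilon^{1/\Delta}.$$
Hence $(d-\epsilon)^{\Delta-1}\geqslant\epsilon^{(\Delta-1)/\Delta}\geqslant\epsilon$, which settles the first check. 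For the second, $(d-\epsilon)^{\Delta}\geqslant\Delta^{\Delta}\epsilon$, and the size hypothesis $|X_i|\geqslant m/\epsilon$ gives $m\leqslant\epsilon|X_i|$. So it suffices that $\Delta^{\Delta}\epsilon|X_i|>(\Delta+1)\epsilon|X_i|$, which holds for $\Delta\geqslant2$.

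\textbf{Main obstacle: matching the exact constants.} This is the step needing care, in particular the degenerate case $\Delta=1$.
\begin{itemize}
\item For $\Delta=1$, $F$ is a matching plus isolated vertices. Each vertex has at most one neighbour, so only that neighbour's bad set is removed, and each candidate set is cut at most once. I would refine the count to remove bad vertices only when the relevant $C_j$ is still large, and use that the inequality $d-\epsilon\geqslant\epsilon^{1/\Delta}$ has slack $\Delta\epsilon^{1/\Delta}$.
\item If needed, I would handle the $+m$ term by noting that at most $m$ vertices are used while $(d-\epsilon)^{\Delta}|X_i|-\Delta\epsilon|X_i|\geqslant(\Delta^{\Delta}-\Delta)\epsilon|X_i|$, treating $\Delta=1$ separately. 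In that case the $+m$ term amounts to finding an edge of $X_i\times X_j$ avoiding $O(m)$ used vertices. This follows directly from regularity applied to $X_i$ and $X_j$ minus the used vertices, since $|X_i|\geqslant m/\epsilon$.
\end{itemize}
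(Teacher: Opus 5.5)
Your greedy vertex-by-vertex embedding, with candidate sets cut at most $\Delta$ times and bad vertices controlled by one-sided $\epsilon$-regularity, is correct and is the standard argument for this result; the paper gives no proof of its own and only cites Zhao's book, whose argument (as far as I recall, since the text is not reproduced here) proceeds the same way. Your separate treatment of $\Delta=1$ is also sound: for each matching edge $ij$, find an edge between $X_i\setminus U$ and $X_j\setminus U$, where $U$ is the set of at most $m-2$ used vertices. This works because $2\epsilon\leqslant d\leqslant 1$ forces $\epsilon\leqslant 1/2$, so $|X_i\setminus U|\geqslant(1-\epsilon)|X_i|\geqslant\epsilon|X_i|$, and regularity then gives density at least $d-\epsilon\geqslant\epsilon>0$. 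The vaguer first bullet about ``slack'' is unnecessary, and in fact there is no slack when $\Delta=1$.
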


\begin{lemma}[\cite{Furedi2015}]\label{Furedi}
    Suppose that $K_{p+1} \nsubseteq G$, $|V(G)| = n$, $t \geqslant 0$, and
$e(G) = t_{p}(n) -t.$
Then there exists an (at most) $p$-chromatic subgraph $H_0$, $E(H_0) \subset E(G)$ such that
$$
e(H_0) \geqslant e(G) - t.$$
\end{lemma}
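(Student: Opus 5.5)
We would prove this by a greedy vertex partition in the spirit of Zykov symmetrization. The plan has two parts: build a partition of $V(G)$ into at most $p$ classes, and then show that at most $t$ edges of $G$ lie inside the classes. Deleting those edges then leaves the required $p$-chromatic subgraph $H_0$.

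\textbf{Constructing the partition.} Set $U_1=V(G)$. For $i=1,2,\dots$, as long as $U_i\neq\emptyset$, do the following:
\begin{itemize}
\item choose $x_i\in U_i$ of maximum degree in the induced subgraph $G[U_i]$;
\item set $U_{i+1}=N(x_i)\cap U_i$ and $V_i=U_i\setminus U_{i+1}$.
\end{itemize}
Note that $x_i\in V_i$, so each class is nonempty. Also $x_1,\dots,x_i$ are pairwise adjacent, since $x_{j}\in U_{j}\subseteq N(x_{j-1})\cap U_{j-1}$ for every $j\leqslant i$, and every vertex of $U_{i+1}$ is adjacent to all of $x_1,\dots,x_i$. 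If $U_{p+1}$ were nonempty, any vertex of it together with $x_1,\dots,x_p$ would span a $K_{p+1}$, which is impossible. Hence the process stops after at most $p$ steps, and $V_1,\dots,V_k$ with $k\leqslant p$ partition $V(G)$. Let $H$ be the complete $k$-partite graph on these classes, so that $e(H)\leqslant t_p(n)$ by Tur\'an's theorem (or simple convexity).

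\textbf{The degree-sum inequality.} This is the key step. For $v\in V_i$ we have $d_{U_i}(v)\leqslant d_{U_i}(x_i)=|U_{i+1}|$ by the choice of $x_i$. Summing over $v\in V_i$ gives
$$2e(V_i)+e(V_i,U_{i+1})\leqslant |V_i||U_{i+1}|.$$
Since $U_{i+1}=V_{i+1}\cup\dots\cup V_k$, each edge of $G$ between two different classes is counted exactly once on the left, in the term for the class with the smaller index. Summing over $i$ therefore yields
$$e(G)+\sum_i e(V_i)\leqslant \sum_i |V_i||U_{i+1}|=e(H)\leqslant t_p(n).$$

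\textbf{Conclusion.} Substituting $e(G)=t_p(n)-t$ gives $\sum_i e(V_i)\leqslant t$. Let $H_0$ be $G$ with all edges inside the classes $V_i$ removed, i.e.\ $H_0=G\cap H$. Then $H_0$ is $k$-partite with $k\leqslant p$, so it is at most $p$-chromatic, $E(H_0)\subseteq E(G)$, and $e(H_0)=e(G)-\sum_i e(V_i)\geqslant e(G)-t$.

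The only delicate point is choosing $x_i$ to maximize the degree \emph{within} $G[U_i]$ rather than in $G$. This choice is exactly what makes the bound $d_{U_i}(v)\leqslant |U_{i+1}|$ valid for every $v\in V_i$, and it is what lets the telescoping sum count each cross edge exactly once.
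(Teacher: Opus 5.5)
Your proof is correct, and it is essentially the argument in the cited source: the paper does not prove this lemma itself but quotes it from F\"uredi (2015). F\"uredi's proof likewise picks a maximum-degree vertex $x_i$ inside the current common neighbourhood $U_i$, bounds $d_{U_i}(v)\leqslant |U_{i+1}|$ for $v\in V_i$, and sums to get $e(G)+\sum_i e(V_i)\leqslant e(K(V_1,\dots,V_k))\leqslant t_p(n)$.
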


Let $F$ be a graph with Tur\'{a}n density
$\pi(F)=1-\frac{1}{r},$ where $\pi(F)$ is the Tur\'{a}n density of a graph $F$ and is defined as $\pi(F)=\lim\limits_{n\rightarrow \infty} \frac{ex(n,F)}{\binom{n}{2}}.$ Denote by $\mathcal{G}_{n}$ the collection of all $n$-vertex $F$-free graphs with
minimum degree more than $(\pi(F)-\epsilon)n$ and $q(\mathcal{G}_{n})=\max\{q(G):G\in \mathcal{G}_{n}\}.$
%and $\chi(F)=r+1.$
%Notice that $\pi(F)=1-\frac{1}{r}.$
%Let $\mathcal{G}_{n}$ be the collection of all $n$-vertex $F$-free graphs with
%minimum degree more than $(\pi(F)-\epsilon)n.$
If  $r\geqslant 3$ and $F$ is  a graph with  $ex(n,F)=t_{r}(n)+O(1),$  then
$\pi(F)=1-\frac{1}{r}$ as $\left(1-\frac{1}{r}\right)\frac{n^2}{2}-\frac{r}{8}\leqslant  t_{r}(n)\leqslant  \left(1-\frac{1}{r}\right)\frac{n^2}{2}.$
Recently, Zheng, Li and Li~\cite{Zheng Li Li 2025} deduced the following property of $\mathcal{G}_{n}.$
\begin{theorem}[\cite{Zheng Li Li 2025}]\label{theorem of liyongtao}
Let $r\geqslant 3$ and $F$ be a family of graphs with Tur\'{a}n density
$\pi(F)=1-\frac{1}{r}.$ For real numbers $0<\epsilon <\frac{1}{2}$
and $\sigma<\frac{\epsilon}{36}.$
%let $\mathcal{G}_{n}$ be the collection of all $n$-vertex $F$-free graphs with
%minimum degree more than $(\pi(F)-\epsilon)n.$ We denote $q(\mathcal{G}_{n})=\max\{q(G):G\in \mathcal{G}_{n}\}.$
Suppose that
there exists $N > 0$ such that for every $n \geqslant  N,$ we have
\begin{eqnarray}\label{the first inequality}
\begin{vmatrix}\operatorname{ex}(n,F)-\operatorname{ex}(n-1,F)-\pi(F)n\end{vmatrix}\leqslant \sigma n,
\end{eqnarray}
and
\begin{eqnarray}\label{the second inequality}
\begin{vmatrix}q(\mathcal{G}_n)-4\mathrm{ex}(n,F)n^{-1}\end{vmatrix}
\leqslant \sigma.
\end{eqnarray}
Then there exists $n_{0} \in \mathbb{N}$ such that for any $F$-free graph $H$ on $n \geqslant n_{0}$ vertices, we have
$$
q(H)\leqslant  q(\mathcal{G}_n).
$$
In addition, if the equality holds, then $H \in \mathcal{G}_n$.
\end{theorem}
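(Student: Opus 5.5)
We argue by contradiction through iterative vertex deletion, in the style of the adjacency-spectral results of Wang, Kang and Xue. Let $H$ be an $F$-free graph on $n\geqslant n_0$ vertices with $q(H)\geqslant q(\mathcal{G}_n)$. Set $H_0=H$. As long as $H_i$ (on $n-i$ vertices) has a vertex $v_i$ with $d_{H_i}(v_i)\leqslant (\pi(F)-\epsilon)(n-i)$, put $H_{i+1}=H_i-v_i$. Stop at the first $k$ for which $\delta(H_k)>(\pi(F)-\epsilon)(n-k)$. Then $H_k\in\mathcal{G}_{n-k}$, provided $n-k\geqslant N$. We will show $k=0$, which gives both $q(H)\leqslant q(\mathcal{G}_n)$ and, in the equality case, $H\in\mathcal{G}_n$.

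\textbf{Step 1: the gap grows.} Track the quantity
\[
g_i=q(H_i)-\frac{4\,ex(n-i,F)}{n-i}.
\]
By (\ref{the second inequality}), $g_0\geqslant q(\mathcal{G}_n)-4ex(n,F)n^{-1}\geqslant -\sigma$. By (\ref{the first inequality}), the target sequence $4ex(m,F)/m$ decreases from $m$ to $m-1$ by
\[
\frac{4(ex(m)-ex(m-1))}{m}-\frac{4ex(m-1)}{m(m-1)} = 4\pi-2\pi\pm O(\sigma)=2\pi(F)\pm O(\sigma).
\]
So it suffices to prove the one-step bound
\[
q(H_i)-q(H_{i+1})\leqslant 2(\pi(F)-\epsilon)+o(1)+O(\sigma)
\]
whenever the deleted vertex has degree at most $(\pi(F)-\epsilon)(n-i)$. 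Since $\sigma<\epsilon/36$, this yields $g_{i+1}\geqslant g_i+\epsilon$, and hence $g_i\geqslant i\epsilon-\sigma$.

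\textbf{Step 2: the process stops early and $k=0$.} Trivially $q(H_i)\leqslant 2(n-i-1)$, while $4ex(n-i,F)/(n-i)\approx 2\pi(F)(n-i)$. Therefore $g_i\leqslant \frac{2}{r}(n-i)+O(1)$. Combined with $g_i\geqslant i\epsilon-\sigma$, this forces $k\leqslant (1-\delta)n$ for some $\delta=\delta(\epsilon,r)>0$. Hence $n-k\geqslant \delta n\geqslant N$ once $n_0$ is large, and the stopping graph satisfies $H_k\in\mathcal{G}_{n-k}$. Then (\ref{the second inequality}) gives $g_k\leqslant q(\mathcal{G}_{n-k})-4ex(n-k,F)/(n-k)\leqslant \sigma$. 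If $k\geqslant 1$, then $g_k\geqslant \epsilon-\sigma>\sigma$, a contradiction. So $k=0$, i.e.\ $H$ already has minimum degree $>(\pi(F)-\epsilon)n$. Thus $H\in\mathcal{G}_n$ and $q(H)\leqslant q(\mathcal{G}_n)$, with equality only if $H\in\mathcal{G}_n$. (If $q(H)<q(\mathcal{G}_n)$ from the outset, there is nothing to prove.)

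\textbf{Step 3 (main obstacle): the one-step decrement.} Let $x$ be the unit Perron vector of $Q(H_i)$ and let $v$ be the deleted vertex, of degree $d=d(v)$. Using the Rayleigh quotient of $Q(H_{i+1})$ at $x$ restricted to $V\setminus\{v\}$, together with $(q-d)x_v=\sum_{w\sim v}x_w$, we get
\[
q(H_{i+1})\geqslant \frac{q(H_i)-(2q(H_i)-d)x_v^2-\sum_{w\sim v}x_w^2}{1-x_v^2}.
\]
We therefore need entrywise control: $x_u^2\leqslant (1+o(1))/n$ for all $u$. I would obtain this in two stages. First, $q(H_i)\geqslant q(H)-O(i)$ stays of order $2\pi n$ as long as $i\leqslant (1-\delta)n$. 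Then the eigen-equation gives $(q-d_u)x_u=\sum_{w\sim u}x_w$. Iterating this once, and using Cauchy--Schwarz on $\sum x_w$, yields the near-uniform bound on $x_u$. With that bound in hand, the decrement is at most $(2q-d)/n+d/n+o(1)\approx 2\cdot 2\pi-\ldots$.

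This crude estimate must be sharpened to land at $2d/n$-type terms, giving roughly $\leqslant 2(\pi-\epsilon)$. Getting precisely the constant $2(\pi-\epsilon)$, with error $O(\sigma)+o(1)$, is the delicate point. I would first try the identity $\sum_{w\sim v}(x_v+x_w)^2 = (2q-d)x_v^2+\sum_{w\sim v}x_w^2$ with the sharper bound $x_v\approx \frac{d}{q-d}\,\bar{x}$, where $\bar{x}$ is the typical entry. This makes $x_v$ small when $d$ is small.
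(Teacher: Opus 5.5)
The paper gives no proof of this statement. It is quoted from Zheng, Li and Li and used as a black box, so your argument has to stand on its own, and it has a genuine gap at Step~3.

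\textbf{What works.} Your outer scaffolding (Steps 1 and 2) is fine. Write $\pi=\pi(F)$ and $m=n-i$. The pieces that go through are:
\begin{itemize}
\item tracking $g_i=q(H_i)-4\,ex(n-i,F)/(n-i)$;
\item using that $4ex(m,F)/m$ drops by $2\pi\pm O(\sigma)+o(1)$ per deleted vertex;
\item keeping $n-k$ linear via the trivial bound $g_i\leqslant 2(n-i)$;
\item comparing with the hypothesis on $q(\mathcal{G}_{n-k})$ at the stopping graph.
\end{itemize}

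\textbf{Where it breaks.} Everything rests on the one-step bound $q(H_i)-q(H_i-v)\leqslant 2(\pi-\epsilon)+o(1)+O(\sigma)$. That bound is the real content of the theorem, and Step~3 does not establish it. Your Rayleigh-quotient computation, followed by Cauchy--Schwarz on $(q-d)x_v=\sum_{w\sim v}x_w$, gives exactly
\[
q(H_i)-q(H_i-v)\leqslant \frac{(q-d)x_v^2+S}{1-x_v^2}\leqslant \frac{q}{q-d}\cdot\frac{S}{1-x_v^2},\qquad S=\sum_{w\sim v}x_w^2 .
\]
With only $S\leqslant 1$, and with $d$ near $(\pi-\epsilon)m$ and $q\approx 2\pi m$, this is about $2\pi/(\pi+\epsilon)$. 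That exceeds $2(\pi-\epsilon)$, and is close to $2$ for small $\epsilon$. What you actually need is a fair-share bound $S\leqslant(1+o(1))\,d/m+o(1)$. With it the right-hand side is at most about $2(\pi-\epsilon)\pi/(\pi+\epsilon)$, and the induction closes.

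\textbf{Why your suggested route fails.} You propose iterating the eigen-equation with Cauchy--Schwarz to get $x_u^2\leqslant(1+o(1))/m$ for every $u$. This only yields $x_u\leqslant\sqrt{d_u}/(q-d_u)$, i.e.\ $x_u^2\lesssim 1/((2\pi-1)^2m)$, which is $9/m$ when $r=3$. That constant-factor bound is essentially attained: the apex of $K_1\vee T_r(m-1)$ has $q\approx 2\pi m$ and $x_u\approx \bar{x}/(2\pi-1)$.

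More tellingly, the one-step bound is false for $r=2$. The star $K_{1,m-1}$ is triangle-free with $q=m=2\pi m$, and deleting a leaf (degree $1$) lowers $q$ by $1=2\pi$. So any proof of Step~3 must use $r\geqslant 3$ essentially. Your sketch uses it only through $q-d_u\geqslant(2\pi-1)m$, which gives the bound above and nothing more.

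\textbf{What is missing.} The missing input is structural. One way to supply it is a signless-Laplacian stability statement: for $r\geqslant 3$, an $F$-free graph on $m$ vertices with $q\geqslant 2\pi m-O(1)$ is $o(m^2)$-close to $T_r(m)$. Combined with $x_u^2=O(1/m)$, this forces $\|x-\mathbf{1}/\sqrt{m}\|=o(1)$, since the second eigenvalue of $Q(T_r(m))$ is only $\pi m$. That in turn yields the fair-share bound on $S$.

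This is precisely where the adjacency argument of Wang, Kang and Xue is easier. There $\lambda^2x_v^2\leqslant d(v)\sum_{w\sim v}x_w^2\leqslant d(v)$ gives $\lambda(G-v)^2\geqslant\lambda(G)^2-2d(v)$ with no information about how the eigenvector is distributed. For $Q$, the diagonal term makes $S$ enter linearly.

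A smaller point: in Step~3 the lower bound you need is $q(H_i)\geqslant 4ex(n-i,F)/(n-i)-\sigma\approx 2\pi(n-i)$, which comes from the induction. It is not $q(H)-O(i)$, and it is of order $2\pi(n-i)$ rather than $2\pi n$.
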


\begin{lemma}[\cite{Cioaba Feng Tait Zhang 2020}]\label{property of cup of sets}
Let $V_{1},\cdots, V_{p}$ be finite sets. Then
$$ |V_{1}\cap\cdots\cap V_{p}|\geqslant \sum_{i=1}^{p}|V_{i}|-(p-1)\mid\bigcup_{i=1}^{p}V_{i}\mid.$$
\end{lemma}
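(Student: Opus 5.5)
The plan is to work inside the universe $U=\bigcup_{i=1}^{p}V_{i}$, pass to complements taken in $U$, and apply the union bound (subadditivity of cardinality). Every $V_{i}$ is a subset of $U$, so its complement $U\setminus V_{i}$ in $U$ has exactly $|U|-|V_{i}|$ elements. This turns the lower bound on the intersection into an upper bound on a union, which is elementary.

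\textbf{Step 1 (De Morgan).} Inside $U$ we have
$$U\setminus\bigl(V_{1}\cap\cdots\cap V_{p}\bigr)=\bigcup_{i=1}^{p}\bigl(U\setminus V_{i}\bigr).$$

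\textbf{Step 2 (union bound).} Applying the union bound to the right-hand side gives
$$|U|-|V_{1}\cap\cdots\cap V_{p}|\leqslant \sum_{i=1}^{p}\bigl(|U|-|V_{i}|\bigr)=p|U|-\sum_{i=1}^{p}|V_{i}|.$$

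\textbf{Step 3 (rearrange).} Moving terms across yields
$$|V_{1}\cap\cdots\cap V_{p}|\geqslant \sum_{i=1}^{p}|V_{i}|-(p-1)|U|,$$
which is exactly the claimed inequality.

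No step is a genuine obstacle. The only point needing care is that the complements are taken relative to $U$ rather than some larger ambient set. This is what makes each $|U\setminus V_{i}|$ equal to $|U|-|V_{i}|$, and what makes the coefficient of $|U|$ come out as $p-1$.

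An alternative, if a more self-contained argument is preferred, is induction on $p$. The case $p=1$ is trivial. For the inductive step, inclusion–exclusion for two sets gives $|A\cap B|=|A|+|B|-|A\cup B|$. Take $A=V_{1}\cap\cdots\cap V_{p-1}$ and $B=V_{p}$. Then use $|A\cup B|\leqslant|U|$ together with the induction hypothesis for $A$, noting that the union of $V_{1},\dots,V_{p-1}$ has at most $|U|$ elements.
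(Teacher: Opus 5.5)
Your argument is correct and complete. Since $V_1\cap\cdots\cap V_p\subseteq U$ and each $V_i\subseteq U$, De Morgan plus subadditivity give $|U|-|V_1\cap\cdots\cap V_p|\leqslant p|U|-\sum_{i}|V_i|$, and this rearranges to the claim. Your inductive alternative is also sound. When you replace $|\bigcup_{i<p}V_i|$ by $|U|$, you are using that its coefficient $-(p-2)$ is nonpositive, which holds for $p\geqslant 2$.

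The paper gives no proof of its own here; it imports the lemma from Cioab\v{a}, Feng, Tait and Zhang. So there is no in-paper argument to compare with. Your complement/union-bound route is the most direct one and avoids the bookkeeping of the induction.

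One small correction to your closing remark. Taking complements inside a larger ambient set $W\supseteq U$ works equally well, since $|W\setminus V_i|=|W|-|V_i|$ still holds. It yields
\[
|V_1\cap\cdots\cap V_p|\geqslant \sum_{i=1}^{p}|V_i|-(p-1)|W| .
\]
So choosing $U$ is not needed for correctness; it only makes the bound as strong as possible. This weaker $W$-form is in fact how the lemma is used in the proof of Corollary~\ref{property of edge maximal}. There the union $\bigcup_{j}(N_G(w_j)\cap V_i)$ is immediately replaced by $V_i$.
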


\section{The structure of graphs  in $\mathcal{G}_{n}$}

Let $r\geqslant 2,$ $F$ be a graph %a maximal $F$-free graph is a $F$-free graph adding any new edges will result a graph containing a subgraph isomorphic to $F$.
and $ex(n,F)=t_{r}(n)+O(1).$  Then $\chi(F)=r+1$ and there exists an integer $c_{0}$ such that %$c_{0}=  \adjustlimits\limsup_{k\rightarrow\infty n\geqslant k}\{{ex(n,F)-t_r(n)}\}$.
$c_{0}=  \lim\limits_{k\rightarrow\infty}\sup\limits_{n\geqslant k}\{{ex(n,F)-t_r(n)}\}$. In fact,
since $\sup\limits_{n\geqslant k}\{ex(n,F)-t_r(n)\}$ is a decreasing integer sequence with respect to $k$ and has lower bounds,   there is an integer $n_0$ such that for any $k\geqslant n_0$, $\sup\limits_{n\geqslant k}\{{ex(n,F)-t_r(n)}\}=c_{0}$.
Notice that $c_{0}$ and  $n_{0}$ are both dependent on $F,$
let $k_0(F)=\max\{n_0,c_0+1\}$.%if $H$ is a $F$-free graphs with maximum number of edges, then   $e(H)\leqslant t_{r}(n)+c $.

\begin{theorem}\label{jin-main lemma}
    Let $1\gg\epsilon>0,$ $r\geqslant 2$ and  $F$ be a graph with  $ex(n,F)=t_{r}(n)+O(1).$  If $G\in \mathcal{G}_{n} $
   % is a  $F$-free graph on $n$ vertices with $\delta(G)\geqslant (\pi(F)-\epsilon)n$
    and $n$ is sufficiently large, then  \\
    %$(a)$ $G=G_0\cup G_{in}$, where  $G_0\subseteq K_{n_1,\cdots, n_r}$,  $G_{in}\subseteq K^c_{n_1,\cdots, n_r}$ and $\left(\frac{1}{r}-3\sqrt{\epsilon}\right)n\leqslant n_i\le\left(\frac{1}{r}+3\sqrt{\epsilon}\right)n$ for $1\leqslant i\leqslant r$.\\
    $(i)$   there exists a unique partition $V_1, \cdots, V_r$ of  $V(G)$ satisfying  that $e(V_i)\leqslant c_0$  for $1\leqslant i\leqslant r.$ Moreover, we have $\left(\frac{1}{r}-3\sqrt{\epsilon}\right)n< |V_{i}|<\left(\frac{1}{r}+3\sqrt{\epsilon}\right)n$ for $1\leqslant i\leqslant r;$  \\
    %$(b)$   there exists a unique partition $V_1, \cdots, V_r$ of  $V(G)$ satisfying  that $e(V_i)\leqslant c_0$ %and the maximum degree of $G[V_i]$ is at most $k_0-1$
    %and   where $n_{i}=|V_{i}|$ for $1\leqslant i\leqslant r.$\\
    $(ii)$ $e(G_{in})-e(G_{out})\leqslant c_0,$ where $G_{in},$ $G_{out}$ are two graphs  with the same vertex set $V(G)$ and edge sets $\cup_{i=1}^{r}E(V_{i}),$ $\cup_{i\neq j}\overline{E(V_{i},V_{j})}$, respectively. % induced by the edges of $E(K_{n_1,\cdots, n_r}-G_0)$.
\end{theorem}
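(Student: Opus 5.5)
The plan has three stages. First we show that $G$ is close to $r$-partite. Then we fix an optimal partition and read off its properties. Finally we extract part $(ii)$ from the edge-count bound by a counting argument.

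\textbf{Stage 1: $G$ is nearly $K_{r+1}$-free.} Since $\delta(G)>(1-\frac1r-\epsilon)n$, we have $e(G)\geqslant(1-\frac1r-\epsilon)\frac{n^2}{2}$. On the other hand $e(G)\leqslant ex(n,F)\leqslant t_r(n)+c_0$ for $n\geqslant k_0(F)$. Apply Szemer\'edi's regularity lemma with a small parameter $\eta$ and pass to the cleaned graph $G'$. To form $G'$ we delete edges inside parts, edges in irregular pairs, and edges in pairs of density below $(\Delta(F)+1)\eta^{1/\Delta(F)}$. This removes at most $o_\eta(1)n^2$ edges. If $G'$ contained a $K_{r+1}$, then the reduced graph would contain one too. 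Blowing each of its vertices up to $|V(F)|$ copies inside the corresponding parts, the graph counting lemma (Theorem~\ref{graph counting lemma}) would embed $F$, since $\chi(F)=r+1$ means $F\subseteq K_{r+1}(|V(F)|)$. Hence $G'$ is $K_{r+1}$-free with $e(G')\geqslant t_r(n)-\gamma n^2$, where $\gamma=\gamma(\epsilon)\to 0$.

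\textbf{Stage 2: choose the partition.} F\"uredi's Lemma~\ref{Furedi} applied to $G'$ gives an $r$-partite subgraph missing at most $2\gamma n^2$ edges of $G'$, so $G$ has an $r$-partition with $\sum_i e(V_i)\leqslant 3\gamma n^2$. Among all $r$-partitions choose $V_1,\dots,V_r$ minimizing $\sum_i e(V_i)$. Writing $|V_i|=n/r+a_i$, we get $e(G)\leqslant t_r(n)-\frac12\sum_i a_i^2+\sum_i e(V_i)$. Combining this with the lower bound on $e(G)$ yields $\sum_i a_i^2\leqslant(\epsilon+6\gamma)n^2$, which gives the size bounds $|a_i|<3\sqrt\epsilon\, n$ once $\gamma\ll\epsilon$.

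By minimality each vertex $v\in V_i$ satisfies $d_{V_i}(v)\leqslant d_{V_j}(v)$ for all $j$, so $d_{V_i}(v)\leqslant d(v)/r$. The key step is to upgrade this to $d_{V_i}(v)\leqslant |V(F)|$ and then to $e(V_i)\leqslant c_0$. Call $v$ \emph{bad} if $d_{V_j}(v)\geqslant \beta n$ for every $j$, with $\beta$ small. There are only $O(\gamma/\beta)n$ bad vertices. Suppose some vertex $v\in V_i$ is good; then it has few neighbours in some part, and for it to have many neighbours in its own part would force many non-edges, which contradicts minimality and the minimum degree. For a vertex with at least $|V(F)|$ neighbours in its own part $V_i$, we find a copy of $F$ as follows. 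Every vertex of $G$ misses at most $(\frac1r+\epsilon)n$ vertices, so by Lemma~\ref{property of cup of sets} the common neighbourhoods of small sets are large inside every part. A greedy embedding then places $F$ inside $K_{r+1}(t)$ built from $v$, its $V_i$-neighbours, and common neighbours in the other parts; the edge of this $K_{r+1}$ inside $V_i$ plays the role of the extra color class. Bad vertices are handled by showing that a bad vertex together with the near-complete $r$-partite structure yields $F$, so there are none.

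Once the maximum degree inside each $V_i$ is bounded by a constant, suppose $e(V_i)\geqslant c_0+1$ for some $i$. Then $\sum_i e(V_i)$ is bounded, and adding all missing crossing edges produces a graph $G''\supseteq G$ whose structure is still $F$-free. This again follows from a common-neighbourhood argument, because the embedding of $F$ only ever uses constantly many vertices. It would give $e(G'')\geqslant t_r(n)+c_0+1$ after rebalancing, contradicting $ex(n,F)\leqslant t_r(n)+c_0$. The rebalancing needs part sizes nearly equal, which follows from the degree condition together with the internal degree bound.

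Uniqueness follows because any other partition with $e(V_i')\leqslant c_0$ would have to differ on some vertex. That vertex has linearly many neighbours in each of the other classes, which would put $\Omega(n)$ edges inside some $V_i'$.

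\textbf{Stage 3: part $(ii)$.} We have $e(G)=\sum_{i<j}|V_i||V_j|-e(G_{out})+e(G_{in})\leqslant t_r(n)+c_0$ and $\sum_{i<j}|V_i||V_j|\leqslant t_r(n)$. This alone gives only $e(G_{in})-e(G_{out})\leqslant c_0+t_r(n)-\sum_{i<j}|V_i||V_j|$, which is not enough. So part $(ii)$ must come from the same completion argument, applied on possibly unbalanced parts. Let $G^*$ be the graph obtained by adding all of $G_{out}$ to $G$. Show that $G^*$ restricted to the inside edges is still $F$-free in the sense that $K_{|V_1|,\dots,|V_r|}\cup G_{in}$ is $F$-free. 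Then compare it with the balanced graph $T_r(n)$ with the same bounded-degree inside edges transplanted, so that $e(G_{in})\leqslant c_0$ for that configuration. Combining this with $e(G)\leqslant ex(n,F)$ gives the bound.

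\textbf{Main obstacle.} The hardest step is proving that $K_{|V_1|,\dots,|V_r|}\cup G_{in}$ remains $F$-free when the missing crossing edges are added back. I would first try an embedding-transfer argument: any copy of $F$ uses $O(1)$ vertices, and each such vertex can be replaced by a vertex with the same inside-neighbourhood but full crossing degree, using the common-neighbourhood bound of Lemma~\ref{property of cup of sets}.
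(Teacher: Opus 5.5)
Your Stage 1 matches the paper's use of the regularity and counting lemmas, and your uniqueness sketch is close to the paper's last claim. There is a genuine gap, however. Your route to $e(V_i)\leqslant c_0$ and to $(ii)$ rests on a claim that is false in general.

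\textbf{The completion claim is false.} You want $K_{|V_1|,\dots,|V_r|}\cup G_{in}$ to be $F$-free. After transplanting onto $T_r(n)$ you would conclude $e(G_{in})\leqslant c_0$, which is strictly stronger than $(ii)$ and not true. Take $r=2$ and $F=F_3$, so $c_0=6$. Put two disjoint triangles into each side of $T_2(n)$. Then delete all $36$ edges between the six triangle vertices of $V_1$ and the six triangle vertices of $V_2$. For every vertex $v$, the graph $G[N(v)]$ has matching number at most $2$:
\begin{itemize}
\item either every edge of $G[N(v)]$ meets one of the two triangle-mates of $v$,
\item or $N(v)$ is the whole opposite side, which spans only two disjoint triangles.
\end{itemize}
So $G$ is $F_3$-free and lies in $\mathcal{G}_n$. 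Yet $e(G_{in})=12>c_0$ (while $e(G_{in})-e(G_{out})=-24$), and restoring the deleted cross edges creates an $F_3$. Your embedding-transfer idea breaks exactly here. The vertices carrying inside edges cannot be swapped for fresh vertices, and the missing cross edges among them are what keep $G$ $F$-free.

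\textbf{Smaller problems.}
\begin{itemize}
\item A bounded maximum inside degree does not make $\sum_i e(V_i)$ bounded.
\item The structure you build from $v$, its $V_i$-neighbours and their common neighbourhoods is $K_{1,t,\dots,t}$, not $K_{r+1}(t)$. So containment of $F$ in it does not follow from $\chi(F)=r+1$ alone.
\item With a single threshold $\beta$, the $k$ good vertices picked greedily can together miss more than $\beta n$ vertices of a part, so the greedy step fails. You need two scales, as in the paper's Claim~1: inside degree at most $\epsilon^{1/4}n$ for picked vertices, versus neighbourhoods of order $k^2\epsilon^{1/4}n$.
\end{itemize}

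\textbf{The missing idea.} No edges ever need to be added. Each statement follows by exhibiting a bounded subgraph of $G$ itself and comparing it with $ex(N,F)\leqslant t_r(N)+c_0$ for $N\geqslant k_0(F)$.

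For $(ii)$: $e(G_{in})\leqslant rc_0$, so if $e(G_{in})-e(G_{out})\geqslant c_0+1$ then also $e(G_{out})\leqslant (r-1)c_0-1$. Hence the set $U$ of vertices incident to $G_{in}\cup G_{out}$ has bounded size. Extend $U$ to a set $W$ of bounded size (at least $k_0(F)$) with $|W\cap V_i|$ the same for all $i$. Then $G[W]$ is exactly $T_r(|W|)$ with $G_{out}$ removed and $G_{in}$ added. So $e(G[W])=t_r(|W|)+e(G_{in})-e(G_{out})>ex(|W|,F)$, a contradiction.

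The same edge counting replaces your direct embeddings of $F$:
\begin{itemize}
\item A vertex with linearly many neighbours in every part yields greedily a subgraph on $rk+1$ vertices with $t_r(rk+1)+k$ edges.
\item If $e(V_1)\geqslant c_0+1$, take $c_0+1$ inside edges spanned by at most $2(c_0+1)$ vertices. Completing them via common neighbourhoods to a balanced complete $r$-partite frame gives a bounded subgraph with at least $t_r(N)+c_0+1$ edges.
\end{itemize}
This is how the paper obtains $e(V_i)\leqslant c_0$ and $(ii)$. It needs no structural information about $F$ and no completion or rebalancing of the whole graph.
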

\begin{proof}
Apply the graph regularity lemma to obtain an $\eta$-regular partition $V(G)=W_{1} \cup \cdots \cup W_{m}$ for some sufficiently small $\eta>0$ only depending on $\epsilon$ and $F$, to be decided later. Then the number $m$ of parts is also bounded for fixed $F$ and $\epsilon$.

Remove all edges between $ W_{i} $ and $ W_{j}$ if

(a) $\left(W_{i}, W_{j}\right)$ is not $\eta$-regular, or

(b) $d\left(W_{i}, W_{j}\right)<\epsilon / 16$, or

(c) $\min \left\{\left|W_{i}\right|,\left|W_{j}\right|\right\}<\epsilon n /(16 m)$.

Then the number of edges in (a) is no more than $\eta n^{2} \leqslant  \epsilon n^{2} / 16$, the number of edges in (b) is less than $\epsilon n^{2} / 16$, and the number of edges in (c) is less than $m \epsilon n^{2} /(16 m) \leqslant  \epsilon n^{2} / 16$. Thus, the total number of edges removed is no more than $(3 / 16) \epsilon n^{2}$. After removing all these edges, the resulting graph $G^{\prime}$ still has greater than $\pi(F)\frac{n^{2}}{2}-\frac{11\epsilon}{16}n^{2}$ edges.

Next we claim that $G^{\prime}$ is $K_{r+1}$- free.
%Suppose that $G^{\prime}$ contains a copy of $K_{r+1}$.
Otherwise, assume that the $r+1$ vertices of this $K_{r+1}$ land in $W_{i_1}, \cdots, W_{i_{r+1}}$ (allowing repeated indices). Since each pair of these sets is $\eta$-regular, has edge density no less than $ \epsilon / 16$, and each has size  no less than $ \epsilon n /(16 m)$,  applying Theorem~\ref{graph counting lemma}, %the graph counting lemma (see Theorem 2.6.4 in~\cite{Zhao 2023}),
we see that as long as
$\eta<\left(\epsilon/(16|V(F)|)\right)^{|V(F)|}$
%$\eta<\left(\frac{\epsilon}{16|V(F)|}\right)^{|V(F)|}$
%$\eta\geqslant (16 |V(F)|)^{\frac{|V(F)|}{|V(F)|-1}}$
% $\eta$ is sufficiently small in terms of $\epsilon$ and $F$,
and $n$ is sufficiently large, there exists an injective embedding of $F$ into $G^{\prime},$ where the vertices of $F$ in the $j$-th color class are mapped into $W_{i_j}$ and
 $|W_{i_{j}}|\geqslant \frac{\epsilon n}{16m}\geqslant \frac{|V(F)|}{\eta}$ for $j=1,\cdots,r+1.$
%$k\geqslant |V(F)|$.
So $G$ contains $F$ as a subgraph, a contradiction. Thus $G^{\prime}$ is $K_{r+1}$-free.

By Lemma~\ref{Furedi}, $G^{\prime}$ contains a $r$-chromatic subgraph $G^{{\prime\prime}}$ such that
$e(G^{\prime\prime})>\pi(F)\frac{n^{2}}{2}-\frac{22\epsilon}{16}n^{2}$
and $G^{\prime\prime}$ can be obtained from  $G^{\prime}$ by removing at most $\frac{11\epsilon}{16}n^{2}$ edges. Then we can remove at most $\frac{14\epsilon}{16}n^{2}$ edges from $G$ to obtain $G^{{\prime\prime}}$.
Let $U_1,U_2, \cdots, U_r$ be the $r$ parts of the $r$-chromatic subgraph $G^{{\prime\prime}}$, then $\left(\frac{1}{r}+2\sqrt{\epsilon}\right)n>|U_i|>\left(\frac{1}{r}-2\sqrt{\epsilon}\right)n$ for each $1\leqslant i\leqslant r$ as $e(G^{\prime\prime})>\pi(F)\frac{n^{2}}{2}-\frac{22\epsilon}{16}n^{2}.$
Let $k=\max\{k_{0}(F),r\}.$
\begin{claim}\label{Claim1}
If there is a vertex $u\in U_{i_{0}}$  such that $d_{U_{i_{0}}}(u)>{\epsilon}^{\frac{1}{4}} n$, then there exists just one
 $U_{j_{0}}$ such that $d_{U_{j_{0}}}(u)<k^2\epsilon^\frac{1}{4} n$, where $j_{0}\neq i_{0}$.
\end{claim}

Without loss of  generality, we can assume that $i_0=1$. Suppose for any $j\neq 1$, we have $d_{U_j}(u)\geqslant  k^2\epsilon^\frac{1}{4} n$. Let
$${X}_0=\cup _{t=1}^{r}\{\ v\ |\ \mbox{ if } v
\in U_t \mbox
{ and }
d_{U_t}(v)>{\epsilon} ^{\frac{1}{4}}n\}. $$
Since we can remove at most $\frac{14\epsilon}{16}n^{2}$ edges from $G$ to obtain $G^{\prime\prime}$,  $\sum_{i=1}^{r}e(U_{i})\leqslant  \frac{14\epsilon}{16}n^{2},$ which implies that
$$| X
_0|<2{\epsilon}^{\frac{3}{4}}n<{\epsilon} ^{\frac{1}{4}}n.$$
By $u\in U_{1}$ and $d_{U_{1}}(u)>\epsilon^{\frac{1}{4}}n,$  there are $k$ vertices in $ U_1\setminus X_0$, denoted by $u_{1t},1\leqslant t\leqslant k$, adjacent to $u$. We will do the following process to get $u_{st}\in U_s,~2\leqslant s\leqslant r,~1\leqslant t\leqslant k$. \\

\vspace{-0.2cm}
 \hspace{0.35cm}$s:=1;$

\vspace{0.05cm} \hspace{0.3cm}{\bf While} $s<r$ {\bf do begin}

\vspace{0.05cm}\hspace{0.7cm}
$X_s=\{ v\in \cup_{i=s+1}^rU_i| ~\mbox{there exists some } 1\leqslant t\leqslant k \mbox{ such that } v\mbox{ is not adjacent to }u_{st}\}$; \\
\vspace{0.05cm}\hspace{1.2cm}
Select $k$ vertices
from $ U_{s+1}\setminus (\cup_{i=0}^{s}X_i)$ adjacent to $u$, denoted by $u_{(s+1)t},~1\leqslant t\leqslant k;$\\
\vspace{0.05cm}\hspace{1.25cm} $s:=s+1$;

\vspace{0.1cm}\hspace{0.3cm}{\bf End.}
\vspace{0.25cm}

By $\delta(G)> (\pi(F)-\epsilon)n$ and each vertex of $  U_{s}\setminus (\cup_{i=0}^{s-1}X_i)$ has no more than $ {\epsilon} ^{\frac{1}{4}}n$ neighbors in $U_{s}$, so each vertex of  $  U_{s}\setminus (\cup_{i=0}^{s-1}X_i)$ has more than $(\pi(F)-\epsilon-{\epsilon} ^{\frac{1}{4}})n$ neighbors not in $U_{s}$. Since $|\cup_{\substack{i=1\\i\neq s}}^rU_i|=n-|U_s|< (\pi(F)+2\sqrt{\epsilon})n$, we have
$$|X_s|\leqslant k\left(|\cup_{\substack{i=1\\i\neq s}}^rU_i|-(\pi(F)-\epsilon-{\epsilon} ^{\frac{1}{4}})n\right)< k\left(2\sqrt{\epsilon}+\epsilon+\epsilon^{\frac{1}{4}}\right)n.$$
Since for any $j\neq 1$,  $d_{U_j}(u)\geqslant k^2\epsilon^\frac{1}{4} n$ and $$\left| \cup_{i=0}^{r-1}X_i\right|< (r-1)k\left(2\sqrt{\epsilon}+\epsilon+\epsilon^{\frac{1}{4}}\right)n+2{\epsilon}^{\frac{3}{4}}n<k^2\epsilon^{\frac{1}{4}} n,$$
we can choose vertices $u_{st},~1\leqslant t\leqslant k$ which are adjacent to $u$. This  ensures the procedure can be done until $s=r$ as $|U_t|>\left(\frac{1}{r}-2\sqrt{\epsilon}\right)n$ for $1\leqslant  t\leqslant  r$.

 By the above procedure, we have $u_{ij}$ is adjacent to $u_{st}$ for any $i\neq s$. We denote by $H$ the graph induced by $u$ and $u_{st},~1\leqslant s\leqslant r,~1\leqslant t\leqslant k.$ Then $e(H)\geqslant t_r(rk+1)+k>t_r(rk+1)+c_0\geqslant  ex(rk+1,F)$ as $rk+1>k_0(F)$, a contradiction as $H$ is $F$-free. Then there exists  one
 $U_j$ such that $d_{U_j}(u)<k^2\epsilon^\frac{1}{4} n$, where $j\neq 1$. If there are two $U_{j_1},U_{j_2},~j_1, j_2\neq 1$ such that $d_{U_{j_1}}(u),~d_{U_{j_2}}(u)<k^2\epsilon^\frac{1}{4} n$, then $$d_G(u)\leqslant n-|U_{j_1}|-|U_{j_2}|+2k^2\epsilon^\frac{1}{4}n< n-\left(\frac{2}{r}-4\sqrt{\epsilon}\right)n+2k^2\epsilon^\frac{1}{4}n<(\pi(F)-\epsilon)n< \delta(G),$$
a contradiction.

\vspace{0.25cm}

\begin{claim}\label{Claim2}
    There exists a partition $V_1,V_2, \cdots, V_r$ of  $V(G)$ such that for each $1\leqslant i\leqslant r$, $\left(\frac{1}{r}-3\sqrt{\epsilon}\right)n<|V_i|<\left(\frac{1}{r}+3\sqrt{\epsilon}\right)n$ and $e(V_i)\leqslant c_{0}.$ %and the maximum degree of $G[V_i]$ is at most $k-1$.
\end{claim}

By Claim~\ref{Claim1}, we can move all  possible  vertices  $u\in U_i$ to the corresponding vertices set  $U_j$ if $d_{U_i}(u)>\epsilon^{\frac{1}{4}}n$, $d_{U_j}(u)<k^2\epsilon^{\frac{1}{4}}n$ and $i\neq j,$ the resulting partition of $V(G)$ is $V_1,V_2, \cdots,V_r$. Then   $\left(\frac{1}{r}+3\sqrt{\epsilon}\right)n>|V_i|>\left(\frac{1}{r}-3\sqrt{\epsilon}\right)n$ for each $1\leqslant i\leqslant r$ as $|X
_0|<2{\epsilon}^{\frac{3}{4}}n<{\epsilon} ^{\frac{1}{2}}n$, and for any vertex $w\in V_i,$ we have
\begin{eqnarray}\label{procedure}
d_{V_i}(w)<k^2{\epsilon} ^{\frac{1}{4}}n +|X_0|<(k^2+1){\epsilon} ^{\frac{1}{4}}n.
\end{eqnarray}
%Firstly, we prove that the maximum degree of $G[V_i]$ is at most $k-1$. Suppose not, without loss of the generality, we can assume that the maximum degree of $G[V_1]$ is at least $k$. Let $v\in V_1$, $d_{V_1}(v)\geqslant k$ and $v_{1t}\in V_1,~1\leqslant t\leqslant k $ are adjacent to $v$. We can do the same procedure as in Claim~\ref{Claim1} to get a graph of order $rk+1$, which contains more than $t_r(rk+1)+k$ edges, a contradiction.

Next we prove that $e(V_i)\leqslant c_{0}$  for each $1\leqslant i\leqslant r$. Suppose not, without loss of  generality, we can assume  $e(V_1)\geqslant c_{0}+1$. Then  choose $2(k+1)$ vertices $w_{1t}\in V_1,~1\leqslant t \leqslant 2(k+1)$ such that the graph induced by these vertices contains at least $c_{0}+1$ edges. By~(\ref{procedure}), we  can also excute the procedure of Claim~\ref{Claim1} to get an $F$-free graph of order $2r(k+1)$, which contains at least $t_r(2r(k+1))+c_{0}+1$ edges, a contradiction.

\begin{claim}
    Let $G_{in},$ $G_{out}$ be two graphs  with the same vertex set $V(G)$ and edge sets $\cup_{i=1}^{r}E(V_{i}),$ $\cup_{i\neq j}\overline{E(V_{i},V_{j})}$, respectively. %$G_{in}=\cup_{i=1}^rG[V_i]$ and $G_{out}$ be the graph induced by the edges missed between $V_i$ and $V_j$, where $i\neq j$.
    Then $e(G_{in})-e(G_{out})\leqslant c_{0}.$
\end{claim}

By Claim~\ref{Claim2}, $e(G_{in})= \sum_{i=1}^re(V_i)\leqslant rc_{0}.$ If $e(G_{in})-e(G_{out})\geqslant c_{0}+1,$ then $ e(G_{out})\leqslant (r-1)c_{0}-1.$ Denote by $U$ the set of the vertices which are incident with at least one edge of $E(G_{in})$ or $E(G_{out})$.  We have $|U|\leqslant 4rc_{0}-2c_{0}-2$. Thus, we can choose a vertex subset $W$ of $V(G)$ containing $rk( 4rc_{0}-2c_{0}-2)$ vertices  such that $U\subseteq W$ and $|W\cap V_i|=|W\cap V_j|,~i\neq j$. Then  $e(W)=t_r(rk(4rc_{0}-2c_{0}-2))+e(G_{in})-e(G_{out})\geqslant t_r(rk(4rc_{0}-2c_{0}+2))+c_{0}+1$, a contradiction.

\begin{claim}
The partition $V_1, \cdots, V_r$ of  $V(G)$ in Claim~2  is the unique partition satisfying  that $e(V_i)\leqslant c_0$  for $1\leqslant i\leqslant r.$
\end{claim}
%Without loss of generality, assume that $n_{1}\geqslant \cdots \geqslant n_{r}.$
Suppose $V_1',V_2', \cdots, V_r'$ is  a different partition of $V(G)$
 satisfying  that $e(V_i')\leqslant c_0$  for $1\leqslant i\leqslant r.$
 %and $|V_1'|\geqslant \cdots \geqslant |V_r'|.$
 Combining with $e(G)\geqslant  \frac{1}{2}(\pi(F)-\epsilon)n^2,$ we have $\left(\frac{1}{r}-3\sqrt{\epsilon}\right)n<|V_{i}'|<\left(\frac{1}{r}+3\sqrt{\epsilon}\right)n.$ Then there exists some  $V_{i}'$ contains two vertices $v$ and $w$ satisfying that $v\in V_{j_{1}}$ and $w\in V_{j_{2}},$ where $j_{1}\neq j_{2}.$ %Assume that $v\in V_{j}'.$
%Let $i_{0}$ be the smallest index
Notice that $|N(v)\cap V_{j_{1}}|\leqslant  c_{0}$  and $|N(v)\cap V_{i}'|\leqslant  c_{0},$ which implies that
\begin{eqnarray*}
|V_{i}'\cup V_{j_{1}}|&=&|(V_{i}'\cup V_{j_{1}})\cap(N(v)\cup N^{c}(v))|=|((V_{i}'\cup V_{j_{1}})\cap N(v))\cup ((V_{i}'\cup V_{j_{1}})\cap N^{c}(v))| \\
&\leqslant  & |(V_{i}'\cup V_{j_{1}})\cap N(v)|+|N^{c}(v)| \leqslant  2c_{0}+|N^{c}(v)|.
\end{eqnarray*}
Thus,
%$|N^{c}(v)\cap V_{j_{1}}|\geqslant  |V_{j_{1}}|- c_{0}$ and $|N^{c}(v)\cap V_{i}'|\geqslant c_{0}.$ Thus,
\begin{eqnarray*}
|V_{i}'\cap V_{j_{1}}|&=&|V_{i}'|+|V_{j_{1}}|-|V_{i}'\cup V_{j_{1}}|\geqslant |V_{i}'|+|V_{j_{1}}|-(2c_{0}+|N^{c}(v)|)\\
&\geqslant & |V_{i}'|+|V_{j_{1}}|-(2c_{0}+n-\delta(G))>
\frac{n}{r}-(6\sqrt{\epsilon}+\epsilon)n-2c_{0}.
\end{eqnarray*}
Also,
\begin{eqnarray*}
|N(w)\cap V_{j_{1}}|&=&|N(w)|+|V_{j_{1}}|-|N(w)\cup V_{j_{1}}|=|N(w)|+|V_{j_{1}}|-|(N(w)\cap V_{j_{2}})\cup (N(w)\cap V_{j_{2}}^{c})\cup V_{j_{1}}|\\
&\geqslant& |N(w)|+|V_{j_{1}}|-(|N(w)\cap V_{j_{2}}|+ |(N(w)\cap V_{j_{2}}^{c})\cup V_{j_{1}}|)\\
&\geqslant& |N(w)|+|V_{j_{1}}|-(c_{0}+ n-|V_{j_{2}}|)
>\frac{n}{r}-(6\sqrt{\epsilon}+\epsilon)n-c_{0}.
\end{eqnarray*}
Thus,
\begin{eqnarray*}
|N(w)\cap V_{i}'|&\geqslant& |N(w)\cap V_{i}'\cap V_{j_{1}}|=|(N(w)\cap V_{j_{1}})\cap (V_{i}'\cap V_{j_{1}})|\\
&=& |N(w)\cap V_{j_{1}}|+ |V_{i}'\cap V_{j_{1}}|-|(N(w)\cap V_{j_{1}})\cup (V_{i}'\cap V_{j_{1}})|\\
&=& |N(w)\cap V_{j_{1}}|+ |V_{i}'\cap V_{j_{1}}|-|(N(w)\cup V_{i}')\cap V_{j_{1}}| \\
&\geqslant& |N(w)\cap V_{j_{1}}|+ |V_{i}'\cap V_{j_{1}}|-|V_{j_{1}}| >
\frac{n}{r}- 15\sqrt{\epsilon}n-2\epsilon n-3c_{0}>c_{0},
\end{eqnarray*}
where the last inequality holds as $n$ is sufficiently large. A contradiction to $e(V_{i}')\leqslant  c_{0}.$ We complete the proof.
\end{proof}
\begin{remark}\label{lemma 1.3}
Let $G\in \mathcal{G}_{n}$ be a  graph. By Theorem~\ref{jin-main lemma}, there exists
%an unique complete multipartite graph $K_{n_{1},\cdots,n_{r}}$ and
a unique partition $V_{1},\cdots,V_{r}$ of $V(G)$  satisfying that   $G=G_{0}\cup G_{in},$  $e(G_{in})-e(G_{out})\leqslant c_{0}$ and $e(V_{i})\leqslant  c_{0}$ for $1\leqslant i\leqslant r,$  where  $G_0\subseteq K_{n_1,\cdots, n_r}$,  $G_{in}\subseteq K^c_{n_1,\cdots, n_r},$  $\left(\frac{1}{r}-3\sqrt{\epsilon}\right)n< n_i=|V_{i}|<\left(\frac{1}{r}+3\sqrt{\epsilon}\right)n$ %and $e(V_{i})\leqslant  c_{0}$
for $1\leqslant i\leqslant r.$ Thus, if $G\in \mathcal{G}_{n},$ then the  three graphs $G_{0},$ $G_{in}$ and $G_{out}$ associated with $G$ are defined,    %$e(G_{in})-e(G_{out})\leqslant c_{0},$  $G$ can be written as  $G_{0}\cup G_{in}$ and
where
$G_{0},$ $G_{in}$ and $G_{out}$ are three graphs corresponding to the partition  $V_{1},\cdots,V_{r}$ of $V(G)$ and  satisfying  the above conditions.
\end{remark}

\begin{corollary}\label{corollary}
Let  $F$ be  a graph
with  $ex(n,F)=t_r(n)+c_0$.
If $G$ is an $F$-free graph of order $n$ with $e(G)=ex(n,F)$ and $n$ is sufficiently large, then $G=G_0\cup G_{in},$  $ex(n-1,F)=t_r(n-1)+c_0,$  $e(G_{in})-e(G_{out})= c_0$ and  $d_{G_{out}}(v)\leqslant  c_0+1$ for any vertex $v\in V(G),$ where $G_0\subseteq T_{r}(n)$, $G_{in}\subseteq T_{r}^c(n).$
\end{corollary}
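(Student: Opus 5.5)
The plan is to reduce to Theorem~\ref{jin-main lemma} by first showing that an extremal graph $G$ lies in $\mathcal{G}_n$, and then to read off the three remaining conclusions from edge counting.

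\textbf{Step 1: $G\in\mathcal{G}_n$.} Since $e(G)=ex(n,F)=t_r(n)+c_0$ and $F$-freeness is preserved under vertex deletion, every vertex $v$ satisfies
\[
d(v)=e(G)-e(G-v)\geqslant t_r(n)+c_0-ex(n-1,F).
\]
Here $ex(n-1,F)\leqslant t_r(n-1)+c_0$ for $n$ large, by the definition of $c_0$ as the eventual supremum. Hence
\[
d(v)\geqslant t_r(n)-t_r(n-1)=n-\lceil n/r\rceil>(\pi(F)-\epsilon)n .
\]
So $G\in\mathcal{G}_n$, and Theorem~\ref{jin-main lemma} and Remark~\ref{lemma 1.3} give the unique partition $V_1,\dots,V_r$ with $G=G_0\cup G_{in}$ and $e(G_{in})-e(G_{out})\leqslant c_0$.

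\textbf{Step 2: $e(G_{in})-e(G_{out})=c_0$ and $G_0\subseteq T_r(n)$.} Counting edges gives
\[
e(G)=e(K_{n_1,\dots,n_r})+e(G_{in})-e(G_{out})\leqslant t_r(n)+c_0 .
\]
Equality forces $e(K_{n_1,\dots,n_r})=t_r(n)$, because $t_r(n)$ is the unique maximum among $r$-partite graphs. Therefore the parts are balanced, $K_{n_1,\dots,n_r}=T_r(n)$, and $e(G_{in})-e(G_{out})=c_0$.

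\textbf{Step 3: $ex(n-1,F)=t_r(n-1)+c_0$.} Delete a vertex $v$ from a largest part. Then $G-v$ is $F$-free, its underlying partition is $T_r(n-1)$, and
\[
e(G-v)=t_r(n-1)+c_0-d_{G_{in}}(v)+d_{G_{out}}(v).
\]
We need a vertex $v$ in a largest part with $d_{G_{in}}(v)\leqslant d_{G_{out}}(v)$. Since $e(G_{in})\leqslant rc_0$ involves only $O(1)$ vertices, any vertex of a largest part not incident to $G_{in}$ works, giving $e(G-v)\geqslant t_r(n-1)+c_0$. Combined with the upper bound from Step 1, this yields equality.

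\textbf{Step 4: $d_{G_{out}}(v)\leqslant c_0+1$.} Suppose some $v\in V_i$ has $d_{G_{out}}(v)\geqslant c_0+2$. Delete $v$. The remaining graph is $F$-free on $n-1$ vertices with
\[
e(G-v)=t_r(n)+c_0-d(v).
\]
Here $d(v)=n-n_i-d_{G_{out}}(v)+d_{G_{in}}(v)$. I expect this step to be the main obstacle, because $d_{G_{in}}(v)$ could in principle be large and offset the missing edges.

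I would first try a local modification that avoids controlling $d_{G_{in}}(v)$. Replace $v$ by a new vertex $v'$ placed in some part and joined to all vertices outside its part, except that it is kept away from $F$-copies by choosing it as a clone of a \emph{typical} vertex $w$ of $V_i$, one with no $G_{in}$ or $G_{out}$ edges, and making $v'$ nonadjacent to $w$. Such a clone is $F$-free: any copy of $F$ using $v'$ becomes a copy using $w$ after swapping, and $w\notin N(v')$. The cloned vertex has degree $n-n_i$ (in $G-v$ plus $v'$), which exceeds $d(v)$ whenever $d_{G_{out}}(v)-d_{G_{in}}(v)\geqslant 1$. That would contradict the extremality of $G$.

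So it suffices to bound $d_{G_{in}}(v)$. Using Step 2, $e(G_{out})=e(G_{in})-c_0$. Then $d_{G_{out}}(v)\leqslant e(G_{out})\leqslant e(G_{in})-c_0$, while $d_{G_{in}}(v)\leqslant e(G_{in})$. This alone is not enough, so I would combine the cloning argument, which forces $d_{G_{in}}(v)\geqslant d_{G_{out}}(v)$ for every $v$, with summation. Summing this inequality over all vertices gives
\[
2e(G_{in})\geqslant 2e(G_{out}),
\]
which is consistent with the identity above. Applied to a single vertex, it gives
\[
d_{G_{out}}(v)\leqslant d_{G_{in}}(v)\leqslant e(G_{in}).
\]
To reach the constant $c_0+1$, I would delete $v$ and use $ex(n-1,F)=t_r(n-1)+c_0$ from Step 3 on $G-v$. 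This gives
\[
e(G_{in}-v)-e(G_{out}-v)\leqslant c_0,
\]
i.e.\ $c_0-d_{G_{in}}(v)+d_{G_{out}}(v)\leqslant c_0$, together with the structure of $G_{in}$ at $v$. Checking that this yields exactly $c_0+1$ is the delicate point I would verify last.
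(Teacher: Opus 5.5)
Your Steps 1--3 are correct. Step 2 in fact differs usefully from the paper: you get balancedness by squeezing $t_r(n)+c_0=e(K_{n_1,\dots,n_r})+e(G_{in})-e(G_{out})$ between $e(K_{n_1,\dots,n_r})\leqslant t_r(n)$ and Theorem~\ref{jin-main lemma}(ii), whereas the paper applies $\delta(G)\geqslant\lfloor(1-\frac1r)n\rfloor$ at a vertex of $V_i$ with no neighbour in $V_i$.

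The genuine gap is Step 4, which you leave unfinished. Your worry that $d_{G_{in}}(v)$ might be large is unfounded. For $v\in V_i$, every $G_{in}$-edge at $v$ lies inside $V_i$, so $d_{G_{in}}(v)=d_{V_i}(v)\leqslant e(V_i)\leqslant c_0$ by Theorem~\ref{jin-main lemma}(i). With this, the claim follows in one line from Steps 1 and 2, which give $d(v)\geqslant n-\lceil n/r\rceil$ and $n_i\geqslant\lfloor n/r\rfloor$:
\[
d_{G_{out}}(v)=n-n_i+d_{G_{in}}(v)-d(v)\leqslant \lceil n/r\rceil-\lfloor n/r\rfloor+c_0\leqslant c_0+1 .
\]
This is exactly the paper's argument.

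What you wrote instead does not reach $c_0+1$.
\begin{itemize}
\item You only bound $d_{G_{in}}(v)\leqslant e(G_{in})\leqslant rc_0$, so you end with $d_{G_{out}}(v)\leqslant rc_0$.
\item Your closing appeal to $ex(n-1,F)$ on $G-v$ merely re-derives $d_{G_{out}}(v)\leqslant d_{G_{in}}(v)$. As written, it does so only for $v$ in a largest part, since otherwise $K_{n_1,\dots,n_r}-v\neq T_r(n-1)$.
\item Your justification that the clone is $F$-free is wrong as stated. A copy of $F$ through $v'$ may also use $w$, and then swapping $v'$ for $w$ gives nothing. For example, with $F=C_4$, two non-adjacent twins with two common neighbours already span a $C_4$.
\end{itemize}
The cloning can be repaired: swap $v'$ with some other vertex of $V_i$ whose neighbourhood is exactly $V(G)\setminus V_i$ and which is not in the copy. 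There are $n/r-O(1)$ such vertices, because $G_{in}$ and $G_{out}$ have $O(1)$ edges. Repaired this way, cloning does yield $d_{G_{out}}(v)\leqslant d_{G_{in}}(v)$, but you still need the missing bound $d_{V_i}(v)\leqslant c_0$ to conclude.
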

\begin{proof}
If $\delta(G)<\lfloor(1-\frac{1}{r})n\rfloor$, then select $w\in V(G)$ satisfying that $d(w)=\delta(G)$. Let $G_{n-1}=G-w.$  Since $G_{n-1}\subseteq G$ and $G$ is $F$-free, $G_{n-1}$ is $F$-free. It is easy to see that
$e(G_{n-1})=e(G)-\delta(G)\geqslant t_r(n)+c_0-\lfloor(1-\frac{1}{r})n\rfloor+1>t_r(n-1)+c_0,$
which is a contradiction as $n$ is sufficiently large and $ex(n-1,F)\leqslant  t_r(n-1)+c_0$. So $\delta(G)\geqslant\lfloor(1-\frac{1}{r})n\rfloor$.

Moreover, since $\delta(G)\geqslant \lfloor(1-\frac{1}{r})n\rfloor,$
by Theorem~\ref{jin-main lemma},
$G=G_0\cup G_{in}$, $G_0\subseteq K_{n_1,\cdots,n_r}$ and $G_{in}\subseteq K_{n_1,\cdots,n_r}^c$.
%\textcolor{red}{
Since $n$ is sufficiently large and $e(V_i)\leqslant  c_0$, there is a vertex $w_i\in V_{i}$ for each $1\leqslant  i\leqslant  r$ such that $d_{V_i}(w_i)=0$.
If there exist two integers $i$ and $j$ satisfying that $|n_i-n_j|> 1,$ where $1\leqslant  i,j\leqslant  r$, then $\delta(G)\leqslant \min_{1\leqslant  i\leqslant  r}\{n-n_i\}<\lfloor(1-\frac{1}{r})n\rfloor$, a contradiction to $\delta(G)\geqslant\lfloor(1-\frac{1}{r})n\rfloor$.
Thus  $|n_i-n_j|\leqslant  1 $ for any $1\leqslant  i,j\leqslant  r$ and  there is a vertex $w'\in V(G)$ such that $d(w')=\delta(G)= \lfloor(1-\frac{1}{r})n\rfloor,$ which deduces that $K_{n_1,\cdots,n_r}=T_{r}(n)$.
Obviously,
for any vertex $w\in V(G),$ $d_{G_{out}}(w)\leqslant  c_0+1$ as $\delta(G)\geqslant \lfloor(1-\frac{1}{r})n\rfloor$ and $e(V_i)\leqslant c_0$ for each $1\leqslant i\leqslant r.$

Now
let $G_{n-1}'=G-w'.$  It is easy to see that $G_{n-1}'$ is $F$-free and $e(G_{n-1}')=e(G)-\delta(G)= t_r(n)+c_{0}-\lfloor(1-\frac{1}{r})n\rfloor =t_r(n-1)+c_0.$ Also, by
$e(G_{n-1}')\leqslant ex(n-1,F)\leqslant t_r(n-1)+c_0$, we have $ex(n-1,F)=t_r(n-1)+c_0.$
Since $G=G_0\cup G_{in}$ and $e(G)=t_{r}(n)+c_0,$   then $t_{r}(n)+c_0=e(G)=t_{r}(n)+e(G_{in})-e(G_{out}),$ which implies that $e(G_{in})-e(G_{out})=c_0.$
\end{proof}

\begin{remark}\label{c0}
By the definition of $c_{0}$ and Corollary~\ref{corollary}, if $n$ is sufficiently large, then $ex(n,F)=t_{r}(n)+c_{0}.$
\end{remark}

Recall that a graph $G$ is $F$-free if $G$ does not contain $F$ as a  subgraph,  we say a graph  $G$ is {\it $F$-saturated} if $G$ is $F$-free but $G+e$ does contain  a copy of $F$ for each edge $e\in E(\overline{G}).$ Obviously, for an $F$-saturated graph $G$ of order $n,$ we have $|E(G)|\leqslant  ex(n,F).$

%A {\it maximal $F$-free graph} is an $F$-free graph adding any new edges  results in a graph containing a subgraph isomorphic to $F$.
\begin{corollary}\label{property of edge maximal}
For a graph $F$
with  $ex(n,F)=t_r(n)+O(1)$,
let $G=G_0\cup G_{in}\in \mathcal{G}_n$ be an $F$-saturated %maximal $F$-free
graph of order $n$ %with the maximum number of
%, where $G_0\subseteq K_{n_1,\cdots,n_r},G_1\subseteq K_{n_1,\cdots,n_r}^c $ are obtained from Theorem~\ref{jin-main lemma}
and
the corresponding partition of  $V(G)$ is $V_1,V_2,\cdots,V_r$. Let $A_i=\{v\in V_i \ | \ d_{V_i}(v)\geqslant 1\}$ and $B_i=V_i\setminus A_i$ for any $1\leqslant i\leqslant r$. If $n$ is sufficiently large, then  $|A_i|\leqslant 2c_0,$ $e(G_{out})\leqslant  2(r-1)rc_{0}^2$ and  any vertex $u\in B_i$ is adjacent to every vertex of $V(G)\setminus V_i$ for $1\leqslant i\leqslant r.$
\end{corollary}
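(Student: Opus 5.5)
We prove the three assertions in the order $|A_i|\leqslant 2c_0$, then the statement about $B_i$, then the bound on $e(G_{out})$.

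\emph{Bound on $|A_i|$.} By Theorem~\ref{jin-main lemma}$(i)$ we have $e(V_i)\leqslant c_0$. Every vertex of $A_i$ is an endpoint of some edge inside $V_i$, and $e(V_i)$ edges have at most $2e(V_i)$ endpoints. Hence $|A_i|\leqslant 2e(V_i)\leqslant 2c_0$.

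\emph{Every $u\in B_i$ is complete to $V(G)\setminus V_i$.} This is the key step, and it uses saturation. Suppose $u\in B_i$ and $v\in V_j$, $j\neq i$, with $uv\notin E(G)$. Since $G$ is $F$-saturated, $G+uv$ contains a copy of $F$. The plan is to show $G+uv$ is $F$-free, a contradiction. Work in $G'=G+uv$ with the same partition $V_1,\dots,V_r$.
\begin{itemize}
\item Inside the parts: we have $e_{G'}(V_t)=e_G(V_t)\leqslant c_0$ for every $t$, since $uv$ is a cross edge.
\item Comparison of $G_{in}$ and $G_{out}$: for $G'$ we get $e(G'_{in})-e(G'_{out})=e(G_{in})-e(G_{out})+1$, which could exceed $c_0$.
\end{itemize}
So the approach is to imitate the proof of the third claim in Theorem~\ref{jin-main lemma}. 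Choose a bounded, balanced set $W$ that contains all vertices incident with edges of $G'_{in}$ or $G'_{out}$ and $u,v$, and that has plenty of common-neighbour vertices in every part. If $e(G'[W])>t_r(|W|)+c_0$, then $G'$, and hence $F$, is not contained in a contradiction-free way, i.e.\ $F\subseteq G'[W]$ is not excluded by counting and we must instead argue via the structure of $F$.

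The cleaner route is as follows. Take any copy of $F$ in $G'$ using the edge $uv$. Because $u$ has no neighbours in $V_i$ and $\delta(G)>(\pi(F)-\epsilon)n$, we can replace $u$ in this copy by a different vertex $u'\in B_i$ that is adjacent to all the other (boundedly many) vertices of the copy, including $v$. Such a $u'$ exists because the at most $|V(F)|$ vertices of the copy outside $V_i$ each miss only $O(\epsilon^{1/4}n)$ vertices of $V_i$ (inequality~(\ref{procedure}) and the minimum degree), while $|A_i|\leqslant 2c_0$ and $|V_i|\approx n/r$. We need that $u$ has no neighbours in the copy inside $V_i$; this holds since $u\in B_i$. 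The resulting copy lies in $G$, contradicting $F$-freeness. This vertex-swapping is the main obstacle: one must check that the common non-neighbourhood count leaves a valid $u'$ not already in the copy, which follows from $n$ large.

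\emph{Bound on $e(G_{out})$.} Each missing cross edge must have both endpoints in $\bigcup_t A_t$, by the previous step. There are at most $r\cdot 2c_0$ such vertices. For a vertex $a\in A_i$, its missing cross edges go to $\bigcup_{j\neq i}A_j$, which has size at most $2(r-1)c_0$. Summing over the at most $2rc_0$ choices of $a$, and halving for double counting (or not, if a cruder bound suffices), gives $e(G_{out})\leqslant \tfrac12\cdot 2rc_0\cdot 2(r-1)c_0=2(r-1)rc_0^2$.
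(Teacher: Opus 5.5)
Your proof is correct and is essentially the paper's. You get $|A_i|\le 2c_0$ from $e(V_i)\le c_0$, then swap $u$ for some $u'\in B_i$ outside the copy that is adjacent to all of $u$'s neighbours in the copy (these lie outside $V_i$ because $u\in B_i$), and finally count missing cross edges, which can only join $A_i$ to $A_j$. The preliminary detour imitating the third claim of Theorem~\ref{jin-main lemma} is a dead end, since a dense $G'[W]$ would only show $F\subseteq G+uv$, which saturation already gives; you can simply delete it.
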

\begin{proof}
By Theorem~\ref{jin-main lemma}, $e(V_i)\leqslant c_0$, which implies that $|A_i|\leqslant 2c_0$. Suppose that there are two vertices $u\in B_i$ and $v\in V(G)\setminus V_i$ satisfying  that $uv\notin E(G)$. Since $G$ is an $F$-saturated graph,  the graph $G'=G+\{uv\}$ contains a copy of $F$, denoted by $F'$. Since $G$ is $F$-free, $F'$ contains the edge $uv$. Let $N_{F'}(u)=\{w_1,w_2,\cdots, w_s\}$. Since $u\in B_i$, we have  $w_1,w_2,\cdots,w_s$ are not in $V_i$. We can assume that $w_j\in V_{i_j},~1\leqslant j\leqslant s.$

For any vertex $w_j$,  we have
$$|N_G(w_j)\cap V_i|=|N_G(w_j)|+|V_i|-|N_G(w_j)\cup V_i|\geqslant \delta(G)+|V_i|-n+|V_{i_j}|-c_0>\left(\frac{1}{r}-\epsilon-6\sqrt{\epsilon}\right)n-c_0,$$
which follows from $(\frac{1}{r}-3\sqrt{\epsilon})n< |V_i|,|V_{i_j}|< (\frac{1}{r}+3\sqrt{\epsilon})n$, $N_G(w_j)\cup V_i\subseteq (V\setminus V_{i_j})\cup (N _G(w_j)\cap V_{i_j} )$  and $|N _G(w_j)\cap V_{i_j}| \leqslant e(G[V_{i_j}]) \leqslant c_0$.
By Lemma~\ref{property of cup of sets},
\begin{eqnarray*}
|\cap _{j=1}^s(N_G(w_j)\cap V_i)|&\geqslant & \sum_{j=1}^s|N_G(w_j)\cap V_i|-(s-1)|\cup_{j=1}^s(N_G(w_j)\cap V_i)|\\
&\geqslant &\sum_{j=1}^s|N_G(w_j)\cap V_i|-(s-1)| V_i|\\
&\geqslant &s\left(\frac{1}{r}-\epsilon-6\sqrt{\epsilon}\right)n-sc_0-(s-1)\left(\frac{1}{r}+3\sqrt{\epsilon}\right)n\\
&=&\left(\frac{1}{r}-s\epsilon-(9s-3)\sqrt{\epsilon}\right)n-sc_0\geqslant|V(F)|+2+|A_{i}|,
\end{eqnarray*}
the last inequality holds as $n$ is sufficiently large and $\epsilon$ is sufficiently small. We can find a vertex $u'\in B_i, u'\neq u$, which is adjacent to all vertices $w_j,1\leqslant j\leqslant s$. Replace $u$ with $u'$ in $F'$, we can find a copy of $F$ in $G$, a contradiction. Thus any vertex $u\in B_i$ is adjacent to every vertex of $V(G)\setminus V_i$ for $1\leqslant i\leqslant r$.

Furthermore, by $|A_i|\leqslant 2c_0$ and  any vertex $u\in B_i$ is adjacent with every vertex of $V(G)\setminus V_i$ for $1\leqslant i\leqslant r,$ the edges of $G_{out}$ are only occurred between $A_{i}$ and $A_{j},$ which implies that
$e(G_{out})\leqslant  2(r-1)rc_{0}^2.$
\end{proof}
%\begin{lemma}\label{the upper bound of q}
%Let $1\gg \epsilon >\eta >0,$  $F$ be a graph with $\chi(F)=r+1$  and $G$ be a graph on $n$ vertices with minimum degree $\delta(G)\geqslant (\pi(F)-\epsilon)n,$ there is a partition of $V(G),$ denoted by $\{V_{1},\cdots,V_{r}\},$
%and a subset $W$ of $V(G)$ such that $|W|<\eta n,$  $|V_{i}|>\frac{n}{r}-\epsilon^{1/4}n$ and $d_{\overline{V_{i}}}(w)\geqslant (1-\frac{1}{r}-\epsilon^{\frac{1}{8}})n$ for $1\leqslant  i \leqslant  r.$ If there is vertex $u\in V_{1}$ containing more than $|V(F)|$ neighbors in $V_{1}\cap \overline{W},$ then $G$ contains a copy of $F$ for sufficiently large $n.$
%\end{lemma}
%\begin{proof}
%\end{proof}
\section{Proof of Theorem~\ref{main result}}
In this section, we give the proof of Theorem~\ref{main result}.
The  spectral radius of a nonnegative matrix $A,$ denoted
by $\rho(A)$, is the largest eigenvalue of $A.$
By Perron-Frobenius theorem, there exists a nonnegative vector $\mathbf{x}$ of $Q(G)$ corresponding to $q(G).$
We call it    the {\it Perron vector} if   it satisfies  $\max_{i\in V(G)}\{x_{i}\}=1.$
For a complete $r$-partite graph $K_{n_{1},\cdots,n_{r}}$ on $n$ vertices,
if $r=2,$
then
%Notice that for all complete bipartite graphs
%$K_{n_{1},n-n_{1}}$ , we have
$q(K_{n_{1},n_{2}})=n$ for any two positive integers $n_{1}$ and $n_{2}$ satisfying that $n_{1}+n_{2}=n,$  which is totally different from the case for $r\geqslant 3.$ We %can %
shall
see it in the following lemma.
\begin{lemma}\label{difference of two type of partitions}
Let $r\geqslant 3$ and $n_{1}\geqslant \cdots\geqslant  n_{r}\geqslant  1$ be integers satisfying that $\sum_{i=1}^{r}n_{i}=n.$ If there exist two integers $1\leqslant  i<j\leqslant  r$ satisfying that $n_{i}-n_{j}\geqslant 2,$ then
$(i)$ $q(K_{n_{1},\cdots,n_{i}-1,\cdots,n_{j}+1,\cdots,n_{r}})>q(K_{n_{1},\cdots,n_{i},\cdots,n_{j},\cdots,n_{r}});$
$(ii)$ $q(T_{r}(n))>q(K_{n_{1},\cdots,n_{r}})+\frac{2(r-2)}{r^2n}.$
\end{lemma}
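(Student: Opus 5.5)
The plan is to reduce everything to a scalar secular equation for $q$ of a complete multipartite graph and then use concavity.

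\emph{Secular equation.} Let $K=K_{n_1,\dots,n_r}$ and let $\mathbf{x}$ be its Perron vector of $Q(K)$. By symmetry $\mathbf{x}$ is constant, say $x_i$, on the $i$-th part. Put $S=\sum_j n_jx_j$. The eigen-equation at a vertex of part $i$ reads
$$q x_i=(n-n_i)x_i+S-n_ix_i ,\qquad\text{so}\qquad x_i=\frac{S}{q-n+2n_i}.$$
Summing $n_ix_i$ shows that $q=q(K)$ satisfies
$$f_K(q):=\sum_{i=1}^r \frac{n_i}{q-n+2n_i}=1 .$$
Next I show $q(K)>n$ when $r\geqslant 3$. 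The graph $K$ is connected and strictly contains $K_{n_1,n-n_1}$, whose signless Laplacian spectral radius equals $n$. Hence $q(K)>n$, and $q(K)$ is the largest root of $f_K=1$. On $(n,\infty)$ every term is positive, decreasing and convex in $q$. Consequently $f_K$ is decreasing and convex there, and $f_K(q)<1$ for $q>q(K)$.

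\emph{Part (i).} Write $t=q-n>0$ and $g_t(a)=\frac{a}{t+2a}=\frac12-\frac{t}{2(t+2a)}$. Then $g_t''(a)=-\frac{4t}{(t+2a)^3}<0$ for $a\geqslant 0$, so $g_t$ is strictly concave. Let $K'$ be obtained from $K$ by replacing $n_i,n_j$ with $n_i-1,n_j+1$; here $n_i-1\geqslant n_j+1$. Strict concavity gives
$$g_t(n_i-1)+g_t(n_j+1)>g_t(n_i)+g_t(n_j),$$
so $f_{K'}(q(K))>f_K(q(K))=1$. Since $q(K)>n$, we are inside the region where $f_{K'}$ is decreasing, and therefore its root satisfies $q(K')>q(K)$.

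\emph{Part (ii).} Starting from any unbalanced $K$, repeated moves as in (i) reach $T_r(n)$, and $q$ increases strictly at each step. Let $K^*$ be the graph just before the last move. Then $q(K)\leqslant q(K^*)$, and $K^*$ is $T_r(n)$ with one pair of parts of sizes $b+2,b$ instead of $b+1,b+1$. So every part of $K^*$ has size $n/r+O(1)$. Set $q^*=q(K^*)$ and $t^*=q^*-n$.

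First I get a lower bound on $t^*$ from $q^*\geqslant 4e(K^*)/n=2(1-\frac1r)n-O(1)$, which gives $t^*\geqslant(1-\frac2r)n-O(1)$. With this,
$$\delta:=f_{T}(q^*)-1=2g_{t^*}(b+1)-g_{t^*}(b)-g_{t^*}(b+2).$$
By a second-difference estimate this is at least about $\frac{4t^*}{(t^*+2b+2)^3}\approx\frac{4(r-2)}{r\,n^2}$. I also need an upper bound on $t^*$; $q^*\leqslant 2\Delta(K^*)$ gives $t^*+2n_i=O(n)$.

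By convexity of $f_T$, the tangent line at $q^*$ lies below $f_T$, so
$$q(T_r(n))\geqslant q^*+\frac{\delta}{|f_T'(q^*)|},\qquad |f_T'(q^*)|=\sum_i\frac{2n_i}{(t^*+2n_i)^2}\approx\frac{1}{n}.$$
This yields $q(T_r(n))-q(K)\gtrsim\frac{4(r-2)}{rn}>\frac{2(r-2)}{r^2n}$.

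The main obstacle is making these asymptotic estimates rigorous with explicit constants. One needs a clean lower bound on the second difference of $g_t$, an upper bound on $|f_T'|$, and two-sided bounds on $t^*$. The claimed constant $\frac{2(r-2)}{r^2n}$ is weaker than the heuristic one by a factor of $2r$, so there is ample slack; where exact estimates are awkward, I would use crude bounds such as $t^*+2n_i\leqslant 2n$.
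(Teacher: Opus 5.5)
Your argument is correct in substance and takes a genuinely different route from the paper.

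\textbf{How the two routes differ.} The paper works with the $r\times r$ quotient matrices. It takes a right Perron vector $\mathbf{x}$ of $Q(K)/\pi$ and a left Perron vector $\mathbf{y}$ of $Q(K')/\pi'$, both normalized to sum $1$, and evaluates
$(q(K')-q(K))\,\mathbf{y}^{T}\mathbf{x}=\mathbf{y}^{T}\bigl(Q(K')/\pi'-Q(K)/\pi\bigr)\mathbf{x}=(2y_i-1)x_i+(1-2y_j)x_j$.
It gets positivity from explicit formulas for $x_i/x_j$, $y_i$ and $y_j$. For (ii) it bounds the same quantity for the single graph $K''=K_{n_1'+1,\dots,n_r'-1}$. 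That left vector has $y_k=\frac{n_k}{q-n+2n_k}$, so the paper's normalization $\sum_k y_k=1$ is exactly your secular equation.

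\textbf{What your route buys.} Your scalar formulation makes (i) a short consequence of strict concavity of $a\mapsto a/(t+2a)$ plus monotonicity of $f_{K'}$ on $(n,\infty)$. That part is complete as written. For (ii), the tangent-line bound $q(T_r(n))-q^*\geqslant\delta/|f_T'(q^*)|$ gives the sharper order $\frac{4(r-2)}{rn}$. The paper's step $q(K')-q(K)>(q(K')-q(K))\mathbf{y}^{T}\mathbf{x}$ throws away a factor $\mathbf{y}^{T}\mathbf{x}\approx\frac1r$. The price is that your bounds are asymptotic, whereas the paper's computation is explicit for every $n$.

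Your reduction to the graph just before the last move is also more robust than the paper's. When $r\nmid n$, $K''$ is not the unbalanced graph of largest $q$. For $r=3$, $n=7$ one has $K''=K_{4,2,1}$, yet $q(K_{3,3,1})=9>q(K_{4,2,1})$. So ``combining with (i)'' does not by itself reduce every unbalanced $K$ to $K''$.

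\textbf{Points to fix in (ii).} Part (ii) is still a sketch.

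First, $K^*$ need not have the form you state. When $r\nmid n$, the last move can be $(b+2,b-1)\to(b+1,b)$. This is harmless: with $g=g_{t^*}$ one then has $\delta=\bigl[g(b)-g(b-1)\bigr]-\bigl[g(b+2)-g(b+1)\bigr]$. By concavity this is at least your difference-two expression $\bigl[g(b+1)-g(b)\bigr]-\bigl[g(b+2)-g(b+1)\bigr]$. It still has to be said.

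Second, $f_T'(q)=-\sum_i n_i/(q-n+2n_i)^2$, with no factor $2$. Your value $\approx 1/n$ is that of the correct expression.

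Third, the slack is less ample than you claim. With $u=t^*+2b+2$ one has exactly $\delta=\frac{4t^*}{u(u^2-4)}$. The crude bound $u\leqslant 2n$ costs a factor $8$ through the cube, which the available factor $2r$ does not cover when $r\leqslant 4$. You need $u\leqslant n+4$, and the inequality you already invoke, $q^*\leqslant 2\Delta(K^*)\leqslant 2(n-\lfloor n/r\rfloor+1)$, gives exactly that. You also need $t^*+2n_i'\geqslant n-O(1)$, which follows from $t^*\geqslant(1-\frac2r)n-O(1/n)$. That gives $|f_T'(q^*)|\leqslant n/(n-O(1))^2$, hence $\frac{4(r-2)}{rn}\bigl(1-O(1/n)\bigr)$, and so (ii) holds for $n\geqslant n_0(r)$.

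That is all the paper ever uses. However, the lemma as stated has no largeness hypothesis. Small $n$ would therefore need either non-asymptotic versions of these bounds or a separate check.
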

\begin{proof}
(i)
Let $K=K_{n_{1},\cdots,n_{r}}$ and $K'=K_{n_{1},\cdots,n_{i}-1,\cdots,n_{j}+1,\cdots,n_{r}}$.  Let $\pi=\{V_{1},\cdots,V_{r}\}$ (resp. $\pi'=\{V_{1}',\cdots,V_{r}'\}$) be the partition consisting of all partite sets of  $K$ (resp. $K'$), where $|V_{\ell}|=n_{\ell}$ for $1\leqslant  \ell \leqslant  r,$ $|V_{\ell}'|=n_{\ell}$ for $1\leqslant  \ell \leqslant  r,$ $\ell\neq i,j,$ $|V_{i}'|=n_{i}-1$ and $|V_{j}'|=n_{j}+1.$
The quotient matrix of $Q(K)$ corresponding to the partition  $\pi$ is
$$Q(K)/\pi=\begin{bmatrix}
     n-n_{1}&\cdots & n_{i} & \cdots  &  n_{j} & \cdots & n_{r}\\
      \vdots&\cdots & \vdots & \cdots  &  \vdots& \cdots  &  \vdots\\
     n_{1}&\cdots & n-n_{i} & \cdots  &  n_{j} & \cdots &  n_{r}\\
     \vdots&\cdots & \vdots & \cdots  &  \vdots& \cdots  &  \vdots\\
     n_{1}&\cdots & n_{i} & \cdots  &  n-n_{j} & \cdots&  n_{r}\\
      \vdots&\cdots & \vdots & \cdots  &  \vdots& \cdots  &  \vdots\\
     n_{1}&\cdots & n_{i} & \cdots  &  n_{j} & \cdots&  n-n_{r}
\end{bmatrix}.$$
And the quotient matrix of $Q(K')$ corresponding to the partition $\pi'$ can be deduced similarly.
By Theorem~8.2.8 in \cite{Horn Johnson 2013}, there exist two positive vectors $\mathbf{x}$ and $\mathbf{y}$ such that $(Q(K)/\pi)\mathbf{x}=\rho(Q(K)/\pi)\mathbf{x}$ and $\mathbf{y}^{T}(Q(K')/\pi')=\rho(Q(K')/\pi')\mathbf{y}^{T},$ where $\sum_{i=1}^{r}x_{i}=1$ and $\sum_{i=1}^{r}y_{i}=1.$

It follows from the irreducibility of $Q(K),$ $Q(K'),$ Lemma~2.3.1 in \cite{Brouwer2012} and Theorem~8.3.4 in \cite{Horn Johnson 2013} that $q(K)=\rho(Q(K)/\pi)$ and $q(K')=\rho(Q(K')/\pi').$
Since $(Q(K)/\pi)\mathbf{x}=q(K)\mathbf{x},$ we have
\begin{equation}
\left\{
\begin{array}{ll}
q(K)x_{i} = (n-n_{i})x_{i}+\sum\limits_{\substack{\ell=1 \\ \ell \neq i,j}}^{r}n_{\ell}x_{\ell}+n_{j}x_{j} \nonumber\\
q(K)x_{j} = (n-n_{j})x_{j}+\sum\limits_{\substack{\ell=1 \\ \ell \neq i,j}}^{r}n_{\ell}x_{\ell}+n_{i}x_{i},
\end{array}
\right.
\end{equation}
which implies that
\begin{eqnarray}\label{the eigencomponent of x}
x_{i}=\frac{q(K)-n+2n_{j}}{q(K)-n+2n_{i}}x_{j}.
\end{eqnarray}
%Obviously, $x_{i}< x_{j}$ since $n_{i}- n_{j}\geqslant2.$ %and $x_{j}>\frac{1}{r}.$

Moreover, by  $\mathbf{y}^{T}Q(K')/\pi'=q(K')\mathbf{y}^{T}$ and $\sum_{i=1}^{r}y_{i}=1,$ we have
\begin{equation}
\left\{
\begin{array}{ll}
q(K')y_{i} = (n_{i}-1)(1-y_{i})+(n-(n_{i}-1))y_{i} \nonumber\\
q(K')y_{j} = (n_{j}+1)(1-y_{j})+(n-(n_{j}+1))y_{j},
\end{array}
\right.
\end{equation}
which implies that
\begin{eqnarray*}
y_{i}=\frac{n_{i}-1}{q(K')-n+2(n_{i}-1)}, \quad y_{j}=\frac{n_{j}+1}{q(K')-n+2(n_{j}+1)}.
\end{eqnarray*}
Since $r\geqslant 3,$ we have $K'$ contains $K_{n_{1},n-n_{1}}$ as its proper subgraph, which implies that $q(K')>n$
and $y_{j}<\frac{1}{2}$.
Thus,
\begin{eqnarray}\label{the relation of y_{i}}
y_{i}&=&\frac{(n_{i}-1)(q(K')-n+2(n_{j}+1))}{(n_{j}+1)(q(K')-n+2(n_{i}-1))}y_{j}%\nonumber\\
%&=&y_{j}+\frac{(n_{i}-1)-(n_{j}+1)}{n_{j}+1}y_{j}-\frac{2((n_{i}-1)-(n_{j}+1))}{q(K')-n+2(n_{i}-1)}y_{j}-\frac{2((n_{i}-1)-(n_{j}+1))^2}{(n_{j}+1)(q(K')-n+2(n_{i}-1))}y_{j},\nonumber\\
=y_{j}+\frac{(n_{i}-n_{j}-2)(q(K')-n)}{(n_{j}+1)(q(K')-n+2(n_{i}-1))}y_{j}.
\end{eqnarray}
Obviously, $y_{i}\geqslant y_{j}$ as $n_{i}- n_{j}\geqslant  2.$% and $y_{j}<\frac{1}{r}.$

Therefore, we have
\begin{eqnarray}\label{the difference of spectral radius}
&&q(K')-q(K)>(q(K')-q(K))\mathbf{y}^{T}\mathbf{x}=(\rho(Q(K')/\pi')-\rho(Q(K)/\pi))\mathbf{y}^{T}\mathbf{x} \nonumber\\
&=& \mathbf{y}^{T}(Q(K')/\pi')\mathbf{x}-\mathbf{y}^{T}(Q(K)/\pi) \mathbf{x}=\mathbf{y}^{T}(Q(K')/\pi'-Q(K)/\pi)\mathbf{x}\nonumber\\
&=& (-y_{1}-\cdots -y_{i-1}+y_{i}-y_{i+1}-\cdots- y_{r})x_{i}+(y_{1}+\cdots+y_{j-1}-y_{j}+y_{j+1}+\cdots+y_{r})x_{j}\nonumber\\
&=& (-(1-y_{i})+y_{i})x_{i}+(1-y_{j}-y_{j})x_{j}=(2y_{i}-1)x_{i}+(1-2y_{j})x_{j}\nonumber\\
%&=& (2y_{j}-1)x_{i}+(1-2y_{j})x_{j}\nonumber\\
%&&+2(n_{i}-n_{j}-2))y_{j}x_{i}\left(\frac{1}{n_{j}+1}-\frac{2}{q(K')-n+2(n_{i}-1)}-\frac{2((n_{i}-1)-(n_{j}+1))}{(n_{j}+1)(q(K')-n+2(n_{i}-1))}\right)\nonumber\\
&=& (1-2y_{j})(x_{j}-x_{i})+2(n_{i}-n_{j}-2)y_{j}x_{i}\frac{q(K')-n}{(n_{j}+1)(q(K')-n+2(n_{i}-1))}\nonumber\\
&\geqslant & (1-2y_{j})(x_{j}-x_{i})= (1-2y_{j})\left(1- \frac{q(K)-n+2n_{j}}{q(K)-n+2n_{i}}\right)x_{j}\nonumber\\
&=& (1-2y_{j})\frac{2(n_{i}-n_{j})}{q(K)-n+2n_{i}}x_{j}>0,
\end{eqnarray}
where the last inequality holds as $y_{j}<\frac{1}{2},$ $n_{i}-n_{j}\geqslant 2$ and $q(K)>n.$
We complete the proof of (i).

(ii)
Let  $T_{r}(n)=K_{n_{1}',\cdots,n_{r}'}$ and $K''=K_{n_{1}'+1,\cdots,n_{r}'-1},$ where $\lceil \frac{n}{r}\rceil=n_{1}'\geqslant  \cdots\geqslant  n_{r}'=\lfloor \frac{n}{r}\rfloor.$
Let $\pi=\{V_{1},\cdots,V_{r}\}$ (resp. $\pi'=\{V_{1}',\cdots,V_{r}'\}$) be the  partition consisting of all partite sets  of  $K''$ (resp. $T_{r}(n)$), where $|V_{\ell}|=n_{\ell}'$ for $1\leqslant  \ell \leqslant  r,$ $\ell\neq 1,r,$ $|V_{1}|=n_{1}'+1,$ $|V_{r}|=n_{r}'-1$ and  $|V_{\ell}'|=n_{\ell}'$ for $1\leqslant  \ell \leqslant  r.$
Obviously, $n_{1}'+1\geqslant n_{2}'\geqslant \cdots \geqslant  n_{r}'-1.$
Similarly, there exist two positive vectors $\mathbf{x}$ and $\mathbf{y}$ such that $(Q(K'')/\pi)\mathbf{x}=\rho(Q(K'')/\pi)\mathbf{x}$ and $\mathbf{y}^{T}(Q(T_{r}(n))/\pi')=\rho(Q(T_{r}(n))/\pi')\mathbf{y}^{T},$ where $\sum_{i=1}^{r}x_{i}=1$ and $\sum_{i=1}^{r}y_{i}=1.$
Using the same technique as the proof in (i), by~(\ref{the eigencomponent of x}), (\ref{the relation of y_{i}})  and $n_{1}'+1\geqslant n_{2}'\geqslant \cdots \geqslant n_{r}'-1$, we have $y_{r}\leqslant \frac{1}{r}$ and $x_{r}>\frac{1}{r}.$

Then by~(\ref{the difference of spectral radius}), we have
$$q(T_{r}(n))-q(K'')>(1-2y_{r})\frac{2((n_{1}'+1)-(n_{r}'-1))}{q(K'')-n+2(n_{1}'+1)}x_{r}>\frac{2(r-2)}{r^2n},$$
where the last inequality holds as $n<q(K'')\leqslant  2(n-1),$ $y_{r}\leqslant \frac{1}{r}<x_{r}$ and $r\geqslant 3.$
Combining with (i), we complete the proof of (ii).
\end{proof}
%$\overline{K}_{\alpha n}$
\begin{lemma}\label{the upper bound of signless laplacian spectral radius}
Let $G=G_1\vee G_2$ be a graph on $n$ vertices, $|V(G_1)|=\alpha n$ and $e(G_1)\leqslant  c_1,$ where $\alpha,$  $c_1$ are two positive constants and $n>\frac{2c_{1}}{\alpha}$. Then
$$q(G)< q(\overline{K}_{\alpha n}\vee G_{2})+\frac{4c_1(1-\alpha)n}{(q(G)-(1-\alpha)n-2c_1)^{2}}\leqslant  q(\overline{K}_{\alpha n}\vee G_{2})+\frac{4c_1(1-\alpha)n}{(\alpha n-2c_1)^{2}}. $$
\end{lemma}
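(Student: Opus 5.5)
We compare the Perron vector of $Q(G)$ with the structure of $G'=\overline{K}_{\alpha n}\vee G_2$, which has the same vertex set and differs from $G$ only by the at most $c_1$ edges inside $V(G_1)$. Let $\mathbf{x}$ be the Perron vector of $Q(G)$, normalized so that $\|\mathbf{x}\|_2=1$. Write $S=V(G_1)$ and $T=V(G_2)$, so that $|S|=\alpha n$ and $|T|=(1-\alpha)n$. The Rayleigh quotient gives
$$q(G)=\sum_{uv\in E(G)}(x_u+x_v)^2=\sum_{uv\in E(G')}(x_u+x_v)^2+\sum_{uv\in E(G_1)}(x_u+x_v)^2\leqslant q(G')+\sum_{uv\in E(G_1)}(x_u+x_v)^2 .$$
It therefore suffices to prove that every $u\in S$ satisfies
$$x_u^2< \frac{(1-\alpha)n}{(q(G)-(1-\alpha)n-2c_1)^2}.$$
Indeed, $(x_u+x_v)^2\leqslant 2(x_u^2+x_v^2)\leqslant 4\max_{w\in S}x_w^2$, and summing this over at most $c_1$ edges yields the first inequality. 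The inequality is strict because $\mathbf{x}$ cannot be a Perron vector of $Q(G')$ when $E(G_1)\neq\emptyset$; if $E(G_1)=\emptyset$, then $G=G'$ and the statement is trivial, up to the strictness.

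For the bound on $x_u$, $u\in S$, we use the eigen-equation
$$(q(G)-d_G(u))x_u=\sum_{v\sim u}x_v .$$
We have $d_G(u)=(1-\alpha)n+d_{G_1}(u)\leqslant (1-\alpha)n+2c_1$; since $d_{G_1}(u)\leqslant c_1$, this is a generous estimate. On the right-hand side, Cauchy--Schwarz over the neighbours in $T$ gives $\sum_{v\in T}x_v\leqslant\sqrt{(1-\alpha)n}\,\bigl(\sum_{v\in T}x_v^2\bigr)^{1/2}$, and the neighbours in $S$ contribute $\sum_{v\in N_{G_1}(u)}x_v$. To get the clean form, we avoid bounding the $S$-part separately by absorbing it. Let $M=\max_{w\in S}x_w$, attained at $u$. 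Then
$$(q(G)-(1-\alpha)n-d_{G_1}(u))M\leqslant \sqrt{(1-\alpha)n}+d_{G_1}(u)M,$$
so
$$(q(G)-(1-\alpha)n-2d_{G_1}(u))M\leqslant\sqrt{(1-\alpha)n}.$$
Using $d_{G_1}(u)\leqslant c_1$ we obtain $M\leqslant \sqrt{(1-\alpha)n}/(q(G)-(1-\alpha)n-2c_1)$, which is what we need. The denominator must be positive, and this is where the second inequality comes in.

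For the second inequality we need $q(G)-(1-\alpha)n-2c_1\geqslant \alpha n-2c_1>0$, that is $q(G)\geqslant n$. The latter holds because $G$ contains the spanning complete bipartite graph $K_{\alpha n,(1-\alpha)n}$, whose signless Laplacian spectral radius is $n$, together with monotonicity of $q$ under adding edges. The hypothesis $n>2c_1/\alpha$ gives $\alpha n-2c_1>0$, so the denominators are positive and the bound is monotone. The only step needing care is the maximum-vertex absorption argument and the strictness bookkeeping; everything else is routine.
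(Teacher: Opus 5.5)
Your proof is correct and follows the paper's argument step for step. You take the maximum Perron entry over $V(G_1)$, absorb the at most $2c_1x_v$ contribution of $G_1$ in the eigen-equation, apply Cauchy--Schwarz over $V(G_2)$, split the Rayleigh quotient as $\mathbf{x}^TQ(\overline{K}_{\alpha n}\vee G_2)\mathbf{x}+\sum_{ij\in E(G_1)}(x_i+x_j)^2$, and use $q(G)\geqslant q(K_{\alpha n,(1-\alpha)n})=n$. The only difference is strictness: the paper gets it directly from $\sum_{w\in V(G_2)}x_w^2<1$ (because $\mathbf{x}>0$ on $V(G_1)$), which is simpler than your claim that $\mathbf{x}$ is not a top eigenvector of $Q(\overline{K}_{\alpha n}\vee G_2)$; and when $E(G_1)=\emptyset$ strictness holds outright, since the added term is positive.
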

\begin{proof}
Let $\mathbf{x}$ be the nonnegative unit eigenvector of $G$ corresponding to
$q(G)$ and $  x_{v}=\max_{w\in V(G_{1})}\{x_{w}\}$, where $v\in V(G_1)$. By
$Q(G)\mathbf{x}=q(G)\mathbf{x}$, we have
\begin{eqnarray*}
q(G)x_{v}&=& d(v)x_{v}+\sum_{w\sim v}x_{w}
=  d(v)x_v+  \sum\limits_{\substack{w\sim v \\ w\in V(G_1)}}x_w   +  \sum_{w\in V(G_2)}x_w  \\
&\leqslant & \left[(1-\alpha)n+c_1\right]x_{v}+ c_1 x_{v}+\sum_{w\in V(G_{2})}x_{w} \\
&\leqslant & [(1-\alpha )n+2c_1]x_{v}+\sqrt{\sum_{w\in V(G_{2})}x_{w}^{2}}\cdot\sqrt{\sum_{w\in V(G_{2})}1}
 <  [(1-\alpha)n+2c_1]x_v+\sqrt{(1-\alpha)n},
\end{eqnarray*}
which implies that
$$x_v< \frac{\sqrt{(1-\alpha)n}}{q(G)-(1-\alpha)n-2c_1}\leqslant    \frac{\sqrt{(1-\alpha)n}}{\alpha n-2c_1},$$
where the last inequality holds as $ q(G)\geqslant q(K_{\alpha n, (1-\alpha)n})= n$ and $n>\frac{2c_{1}}{\alpha}$. Thus,
$$q(G)= \mathbf{x}^T Q(G)\mathbf{x}=  \mathbf{x}^TQ(\overline{K}_{\alpha n}\vee G_2)\mathbf{x}+\sum_{ij\in E(G_{1})}(x_{i}+x_{j})^2
< q(\overline{K}_{\alpha n}\vee G_{2})+\frac{4c_1(1-\alpha )n}{(\alpha n-2c_1)^{2}}.$$
Then the assertion holds.
\end{proof}

In the following lemmas,
  assume that $F$ is  a graph with $r\geqslant 3,$ $ex(n,F)=t_{r}(n)+O(1),$  let $G\in \mathcal{G}_{n}$  and $q(G)=q(\mathcal{G}_{n}).$
 By Remark~\ref{lemma 1.3}, there exists
%an unique complete multipartite graph $K_{n_{1},\cdots,n_{r}}$ and
a unique partition $V_{1},\cdots,V_{r}$ of $V(G)$  satisfying that   $G=G_{0}\cup G_{in},$  $e(G_{in})-e(G_{out})\leqslant c_{0}$ and $e(V_{i})\leqslant  c_{0}$ for $1\leqslant i\leqslant r,$  where  $G_0\subseteq K_{n_1,\cdots, n_r}$,  $G_{in}\subseteq K^c_{n_1,\cdots, n_r},$  $\left(\frac{1}{r}-3\sqrt{\epsilon}\right)n< n_i=|V_{i}|<\left(\frac{1}{r}+3\sqrt{\epsilon}\right)n$ %and $e(V_{i})\leqslant  c_{0}$
for $1\leqslant i\leqslant r.$
 By Perron-Frobenius theorem, adding any edge to a graph  will not decrease its signless Laplacian spectral radius. According to the definition of $F$-saturated graph,  $G$ is   an
$F$-saturated graph.

\begin{lemma}\label{the size of each partition}
%Let $G\in \mathcal{G}_{n}$ and $q(G)=q(\mathcal{G}_{n}).$
%Then $G=G_{0}\cup G_{1},$ where  $G_{0}$ is a subgraph of $K_{n_1,\cdots,n_{r}},$ $G_{1}$ is a subgraph of $K_{n_1,\cdots,n_{r}}^{c}.$
If $n$ is  sufficiently large,
 then there exists a constant  $c_2$ %$c_2>\left\lceil 3\sqrt{\frac{(r+1)^2}{2}+\frac{2rc_{0}(r^8-r^7+r^4)}{(r^3-2)^2}}\right\rceil$ and
satisfying that  $n+c_{2}$ is a multiple of $r$ and   $\frac{n+(r-1)^2c_2}{r}\geqslant  n_{1} \geqslant \cdots \geqslant  n_{r}>\frac{n-(r-1)c_2}{r}.$
%where $r\geqslant 3,$

\end{lemma}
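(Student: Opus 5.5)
Here $G\in\mathcal{G}_n$ maximizes $q$ and is $F$-saturated, so Corollary~\ref{property of edge maximal} applies. The plan is to compare $q(G)$ with $q(T_r(n))$ and to show that the part sizes $n_1\geqslant\cdots\geqslant n_r$ can deviate from $n/r$ by at most a constant. The statement is then a rewriting of this bound.

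\textbf{Lower bound.} Since $T_r(n)$ is $K_{r+1}$-free, hence $F$-free for this $F$, and lies in $\mathcal{G}_n$, we have $q(G)\geqslant q(T_r(n))$.

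\textbf{Upper bound, key step.} The aim is
\[
q(G)\leqslant q(K_{n_1,\dots,n_r})+\frac{C}{n}
\]
with $C$ depending only on $r$ and $c_0$. By Corollary~\ref{property of edge maximal}, $|A_i|\leqslant 2c_0$ and every vertex of $B_i$ is complete to $V(G)\setminus V_i$. So $G$ differs from $K_{n_1,\dots,n_r}$ only by:
\begin{itemize}
\item at most $rc_0$ edges inside the parts, spanned by the bounded set $A=\cup A_i$;
\item at most $2(r-1)rc_0^2$ missing cross edges, also within $A$.
\end{itemize}
Let $\mathbf{x}$ be the unit Perron vector of $G$. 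Mimicking Lemma~\ref{the upper bound of signless laplacian spectral radius}, use the eigen-equation at a vertex of $A$ together with $q(G)\geqslant q(T_r(n))>(2-\frac{2}{r})n-O(1)$ and degree $\leqslant n-n_i+c_0$. This gives $x_v=O(n^{-1/2})$ for every vertex $v$. Then
\[
q(G)=\mathbf{x}^TQ(G)\mathbf{x}\leqslant \mathbf{x}^TQ(K_{n_1,\dots,n_r})\mathbf{x}+\sum_{uv\in E(G_{in})}(x_u+x_v)^2\leqslant q(K_{n_1,\dots,n_r})+\frac{4rc_0\cdot O(1)}{n}.
\]
Missing edges only decrease the quadratic form, so they can be dropped.

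\textbf{Lower bound on the deficit.} Next, show that unbalanced partitions lose at least order $D/n$ in $q$, where $D$ measures the deviation of the part sizes, so that a large deviation contradicts the key step. One route: iterate Lemma~\ref{difference of two type of partitions}(i), where each balancing move $n_i\to n_i-1$, $n_j\to n_j+1$ gains at least
\[
(1-2y_j)\frac{2(n_i-n_j)}{q-n+2n_i}x_j\geqslant \frac{c(n_i-n_j)}{n^2}.
\]
Here $y_j,x_j\approx 1/r$ because the part sizes are $n/r\pm 3\sqrt{\epsilon}n$. Summing along a balancing path from $(n_1,\dots,n_r)$ to $T_r(n)$, with $\sum_i|n_i-n/r|=D$, gives roughly $\Omega(D^2/n^2)$. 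A cleaner alternative is Lemma~\ref{difference of two type of partitions}(ii), which gives a gap of $\frac{2(r-2)}{r^2n}$ from a single unbalanced step.

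\textbf{Main obstacle.} The gap $\frac{2(r-2)}{r^2n}$ must dominate the perturbation $C/n$, and these are of the same order $1/n$. I would first sharpen the perturbation bound by bounding $x_v$ on $A$ more precisely, using that $q-d(v)\approx n/r$, which gives $x_v\lesssim\frac{1}{\sqrt{n}}\cdot\frac{r-1}{\cdot}$-type constants. If this is not enough, I would use the quadratic growth in the deviation. The gain from each move is $\gtrsim (n_i-n_j)/n^2$, so the total gain grows like $D^2/n^2$, while the loss stays $C/n$. Hence $D^2\lesssim Cn$, i.e.\ $D=O(\sqrt{n})$. This is not enough by itself; bootstrapping via the explicit quotient eigenvalue would give a gain of $\Theta(D/n)$ for the first-order term. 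To get that, compute $q(K_{n_1,\dots,n_r})$ directly from the quotient matrix, which shows that $q(T_r(n))-q(K_{n_1,\dots,n_r})\geqslant c'\sum_i(n_i-n/r)^2/n$ up to lower order. Combined with the key step, this forces $\sum_i(n_i-n/r)^2\leqslant C/c'$, so every $|n_i-n/r|\leqslant c_3$ for a constant $c_3$.

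\textbf{Finishing.} Choose $c_2\geqslant c_3$ with $c_2\equiv -n \pmod r$; enlarging $c_2$ by at most $r$ achieves this. Then
\[
n_r>\frac{n}{r}-c_3\geqslant \frac{n-(r-1)c_2}{r}.
\]
For the other side, since $\sum_i n_i=n$ and every $n_i>\frac{n-(r-1)c_2}{r}$,
\[
n_1<n-(r-1)\frac{n-(r-1)c_2}{r}=\frac{n+(r-1)^2c_2}{r}.
\]
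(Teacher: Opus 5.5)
Your architecture matches the paper's. You have the perturbation bound $q(G)\leqslant q(K_{n_1,\dots,n_r})+C/n$ (essentially Lemma~\ref{the upper bound of signless laplacian spectral radius} applied to the at most $rc_0$ edges inside the parts) and the lower bound $q(G)\geqslant q(T_r(n))$. You also see that the deficit $q(T_r(n))-q(K_{n_1,\dots,n_r})$ grows quadratically in the imbalance, so a large constant imbalance beats $C/n$. What is missing is a proof of that last estimate, which is the heart of the lemma. You only assert that ``computing $q(K_{n_1,\dots,n_r})$ directly from the quotient matrix shows $q(T_r(n))-q(K_{n_1,\dots,n_r})\geqslant c'\sum_i(n_i-n/r)^2/n$ up to lower order.'' For a general $r\times r$ quotient this is not a one-line computation. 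The lower-order terms must also be controlled uniformly: the $n_i$ may deviate from $n/r$ by $\Theta(\sqrt{\epsilon}\,n)$, and $q(T_r(n))$ itself lies strictly below $2(1-\frac1r)n$ when $r\nmid n$. The paper handles this by reduction. If $n_r\leqslant\frac{n-(r-1)c_2}{r}$, repeated use of Lemma~\ref{difference of two type of partitions}(i) gives $q(K_{n_1,\dots,n_r})\leqslant q(K_{\frac{n-(r-1)c_2}{r},\frac{n+c_2}{r},\dots,\frac{n+c_2}{r}})$. The $2\times2$ equitable quotient of the latter gives $q<2(1-\frac1r)n-\frac{2(r-1)(r-2)c_2^2}{r^2n-2(r-2)rc_2}$. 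The explicit formula for $q(T_r(n))$ gives $q(T_r(n))>2(1-\frac1r)n-\frac{(r+1)^2}{n-2}$. Then $c_2$ is chosen so that $\frac29 c_2^2$ exceeds $(r+1)^2+C$. Your general claim is also true. It can be proved from the secular equation $\sum_i\frac{n_i}{q-n+2n_i}=1$ for the Perron root, using $\frac{1}{1+t}=1-t+\frac{t^2}{1+t}$ and $\sum_i(n_i-\frac nr)=0$. But some such argument has to be supplied.

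Your detour through $D=O(\sqrt n)$ comes from an arithmetic slip. In Lemma~\ref{difference of two type of partitions} the vectors $\mathbf{x},\mathbf{y}$ are normalized over the $r$ parts, so $x_j,y_j\approx\frac1r$. Hence the per-move gain $(1-2y_j)\frac{2(n_i-n_j)}{q(K)-n+2n_i}x_j$ is of order $(n_i-n_j)/n$, not $(n_i-n_j)/n^2$. With the correct order, your Route~1 already closes the gap. A move from part $i$ to part $j$ lowers $\sum_k(n_k-\frac nr)^2$ by exactly $2(n_i-n_j-1)$. Summing the gains along a balancing path to $T_r(n)$ therefore gives $q(T_r(n))-q(K_{n_1,\dots,n_r})\geqslant\frac{c}{n}\left(\sum_k(n_k-\frac nr)^2-\frac r4\right)$. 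You must check that $x_j$ stays bounded below and $y_j$ stays below, say, $\frac25$ along the path; this holds because all part sizes remain within $3\sqrt{\epsilon}\,n$ of $\frac nr$.

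Two small fixes in the finish. First, $c_2\geqslant c_3$ does not give $\frac nr-c_3\geqslant\frac{n-(r-1)c_2}{r}$. You need $c_2\geqslant\frac{r}{r-1}c_3$, with some slack for the strict inequality, since you only know $n_r\geqslant\frac nr-c_3$. Second, your derivation of the bound on $n_1$ from the bound on the other parts and $\sum_i n_i=n$ is correct and simpler than the paper's separate second case.
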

\begin{proof}
%By Theorem~\ref{jin-main lemma}, $G=G_{0}\cup G_{in},$ where  $G_{0}\subseteq K_{n_1,\cdots,n_{r}},$ $G_{in}\subseteq K_{n_1,\cdots,n_{r}}^{c},$ $n_{1}\geqslant  \cdots \geqslant n_{r},$  $\left(\frac{1}{r}-3\sqrt{\epsilon}\right)n\leqslant n_i\le\left(\frac{1}{r}+3\sqrt{\epsilon}\right)n$ and $e(V_{i})\leqslant  c_{0}$ for $1\leqslant i\leqslant r.$
Assume that $n_{r}=\alpha n$ and $0<\epsilon\leqslant  \frac{1}{9r^8}.$  Obviously, $ n_{1}\geqslant  n_{2}\cdots \geqslant n_{r}=\alpha n$ and $\alpha>  \frac{1}{r}-3\sqrt{\epsilon}\geqslant  \frac{1}{r}-\frac{1}{r^4}.$ Since $e(V_{i})\leqslant  c_{0}$ for any $1\leqslant i\leqslant r,$  using Lemma~\ref{the upper bound of signless laplacian spectral radius},
 \begin{eqnarray*}
q(G)&<& q(K_{n_{1},\cdots,n_{r}})+\frac{4rc_{0}(1-\alpha) n}{(\alpha n-2c_{0})^2}
<  q(K_{n_{1},\cdots,n_{r}})+\frac{4rc_{0}(1-\frac{1}{r}+\frac{1}{r^4})n}{(\frac{1}{r}-\frac{1}{r^4}-\frac{2c_{0}}{n})^2n^2}\\
&= & q(K_{n_{1},\cdots,n_{r}})+\frac{4rc_{0}\frac{r^8-r^7+r^4}{(r^3-1-\frac{2c_{0}r^4}{n})^2}}{n}.
\end{eqnarray*}

Let $c_2$ be the smallest positive integer greater than  $\left\lceil 3\sqrt{\frac{(r+1)^2}{2}+\frac{2rc_{0}(r^8-r^7+r^4)}{(r^3-2)^2}}\right\rceil$ satisfying that $n+c_{2}$ is a multiple of $r.$

%{\bf Claim:} $n_{r}> \frac{n-(r-1)c_2}{r}.$

If $n_{r}\leqslant  \frac{n-(r-1)c_2}{r},$
then by Lemma~\ref{difference of two type of partitions}, $q(K_{n_{1},\cdots,n_{r}})\leqslant  q\left(K_{\frac{n-(r-1)c_2}{r},\frac{n+c_2}{r},\cdots,\frac{n+c_2}{r}}\right).$
Let $W_{i}$ be the partite set with $|W_{1}|=\frac{n-(r-1)c_2}{r}$ and $|W_{i}|=\frac{n+c_2}{r}$ for $2\leqslant  i\leqslant  r,$  $\pi=\{W_{1}, W_{2}\cup \cdots\cup W_{r}\}$ be a partition  consisting of all partite sets of  $K_{\frac{n-(r-1)c_2}{r},\frac{n+c_2}{r},\cdots,\frac{n+c_2}{r}}.$
The quotient matrix of $Q\left(K_{\frac{n-(r-1)c_2}{r},\frac{n+c_2}{r},\cdots,\frac{n+c_2}{r}}\right)$ corresponding to the partition $\pi$ is
$$Q\left(K_{\frac{n-(r-1)c_2}{r},\frac{n+c_2}{r},\cdots,\frac{n+c_2}{r}}\right)/\pi=\begin{bmatrix}
    \frac{(n+c_2)(r-1)}{r}&\frac{(n+c_2)(r-1)}{r}\\
     \frac{n-(r-1)c_2}{r}&n+\frac{(r-3)(n+c_2)}{r}
\end{bmatrix}.$$
By the equitable partition, we have  $q\left(K_{\frac{n-(r-1)c_2}{r},\frac{n+c_2}{r},\cdots,\frac{n+c_2}{r}}\right)=\rho\left(Q\left(K_{\frac{n-(r-1)c_2}{r},\frac{n+c_2}{r},\cdots,\frac{n+c_2}{r}}\right)/\pi\right).$ Thus,
\begin{eqnarray*}
q\left(K_{\frac{n-(r-1)c_2}{r},\frac{n+c_2}{r},\cdots,\frac{n+c_2}{r}}\right)&=& \frac{n+\frac{2(r-2)(n+c_2)}{r}+\sqrt{(n+\frac{2(r-2)(n+c_2)}{r})^2-4\frac{2(r-1)(r-2)(n+c_2)^2}{r^2}}}{2}\\
&=&\frac{(3r-4)n+2c_2(r-2)+\sqrt{(rn-2(r-2)c_2)^2-8(r-1)(r-2)c_2^2}}{2r}\\
&<& \frac{(3r-4)n+2c_2(r-2)+(rn-2(r-2)c_2)-\frac{4(r-1)(r-2)c_2^2}{rn-2(r-2)c_2}}{2r}\\
&=& 2\left(1-\frac{1}{r}\right)n-\frac{2(r-1)(r-2)c_2^2}{r^2n-2(r-2)rc_2}.
\end{eqnarray*}

Assume that $n=kr+t,$ where $0\leqslant  t <r$. By the result in \cite{Cai Fan 2009}, we have $$q(T_{r}(n))=\frac{(3r-4)k+3t-2+\sqrt{r^2k^2+(2(t+2)r-8t)k+(t-2)^2}}{2}.$$

If $t=0,$ then $q(T_{r}(n))=2\left(1-\frac{1}{r}\right)n>2\left(1-\frac{1}{r}\right)n-\frac{(r+1)^2}{n-2}.$
If $t\neq 0,$ then $q(T_{r}(n))>\frac{(3r-4)k+3t-2+n+2-\frac{4t}{r}-\frac{2(r+1)^2}{n-2}}{2}=2\left(1-\frac{1}{r}\right)n-\frac{(r+1)^2}{n-2}$
as $n$ is sufficiently large.
Since $T_{r}(n)\in \mathcal{G}_{n}$, we have
\begin{eqnarray*}
q(T_{r}(n))&\leqslant & q(\mathcal{G}_{n})=q(G)
< q\left(K_{\frac{n-(r-1)c_2}{r},\frac{n+c_2}{r},\cdots,\frac{n+c_2}{r}}\right)+\frac{4rc_{0}\frac{r^8-r^7+r^4}{(r^3-1-\frac{2c_{0}r^4}{n})^2}}{n}\\
&<& 2\left(1-\frac{1}{r}\right)n-\frac{2(r-1)(r-2)c_2^2}{r^2n-2(r-2)rc_2}+\frac{4rc_{0}\frac{r^8-r^7+r^4}{(r^3-1-\frac{2c_{0}r^4}{n})^2}}{n}\\
&<& q(T_{r}(n))+\frac{(r+1)^2}{n-2}-\frac{2(r-1)(r-2)c_2^2}{r^2n-2(r-2)rc_2}+\frac{4rc_{0}\frac{r^8-r^7+r^4}{(r^3-1-\frac{2c_{0}r^4}{n})^2}}{n}\\
&<& q(T_{r}(n))+\frac{(r+1)^2}{n-2}-\frac{\frac{2}{9}c_2^2}{n-2}+\frac{4rc_{0}\frac{r^8-r^7+r^4}{(r^3-1-\frac{2c_{0}r^4}{n})^2}}{n}\\
&\leqslant  & q(T_{r}(n))+\frac{(r+1)^2-\frac{2}{9}c_2^2+4rc_{0}\frac{r^8-r^7+r^4}{(r^3-1-\frac{2c_{0}r^4}{n})^2}}{n-2}
<  q(T_{r}(n)),
\end{eqnarray*}
where the last inequality holds as $n$ is sufficiently large and $c_2>\left\lceil 3\sqrt{\frac{(r+1)^2}{2}+\frac{2rc_{0}(r^8-r^7+r^4)}{(r^3-2)^2}}\right\rceil.$ This is a contradiction. Thus,  $n_{r}> \frac{n-(r-1)c_2}{r}.$

If $n_{1} >\frac{n+(r-1)^2c_2}{r},$
then by Lemma~\ref{difference of two type of partitions},
$$q(K_{n_{1},\cdots,n_{r}})<  q\left(K_{\frac{n+(r-1)^2c_2}{r},\frac{n-(r-1)c_2}{r},\cdots,\frac{n-(r-1)c_2}{r}}\right)< q\left(K_{\frac{n-(r-1)c_2}{r},\frac{n+c_2}{r},\cdots,\frac{n+c_2}{r}}\right).$$
Using the above discussion, we deduce a contradiction.  Thus,  $n_{1} \leqslant \frac{n+(r-1)^2c_2}{r}.$
\end{proof}

\begin{lemma}\label{the lower bound of minimum component}
Let %$G\in \mathcal{G}_{n},$  $q(G)=q(\mathcal{G}_{n}),$
$\mathbf{x}$ be the  Perron vector of $G$ corresponding to $q(G).$
 If $n$ is  sufficiently large,
 then there exists a positive constant $c_{3}$ satisfying that
$x_{w}> 1-\frac{c_{3}}{n}$ for any $w\in V(G).$
\end{lemma}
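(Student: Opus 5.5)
The plan is to compare entries of $\mathbf{x}$ through the eigen-equation $q(G)x_w=d(w)x_w+\sum_{u\sim w}x_u$, exploiting that $G$ is very close to a complete $r$-partite graph with nearly balanced parts. Throughout we use Lemma~\ref{the size of each partition}, Corollary~\ref{property of edge maximal} (which applies since $G$ is $F$-saturated) and the bound $q(G)\geqslant q(T_r(n))>2(1-\frac1r)n-\frac{(r+1)^2}{n-2}$ from the proof of Lemma~\ref{the size of each partition}. In particular $q(G)=2(1-\frac1r)n+O(1)$; the upper bound follows from Lemma~\ref{the upper bound of signless laplacian spectral radius} together with $q(K_{n_1,\dots,n_r})\leqslant q(T_r(n))$.

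\textbf{Step 1: vertices in the same part have almost equal entries.} Let $u^*$ be a vertex with $x_{u^*}=1$, say $u^*\in V_{i}$. For two vertices $v,w\in B_j$, Corollary~\ref{property of edge maximal} says both are adjacent to exactly $V(G)\setminus V_j$ and to nothing in $V_j$. Hence $d(v)=d(w)=n-n_j$, the neighbour sums coincide, and $x_v=x_w$. Call this common value $b_j$.

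\textbf{Step 2: vertices of $A_j$ are close to $b_j$.} For $v\in A_j$, the vertex $v$ misses at most $e(G_{out})\leqslant 2(r-1)rc_0^2$ vertices outside $V_j$ and has at most $c_0$ neighbours inside $V_j$. So $d(v)=n-n_j+O(1)$, and $\sum_{u\sim v}x_u$ differs from the common neighbour sum $S_j$ of $B_j$ by $O(1)$, because all entries lie in $[0,1]$. Subtracting the eigen-equations gives
\[
(q(G)-d(v))(x_v-b_j)=(d(v)-(n-n_j))\,b_j+O(1)=O(1).
\]
Since $q(G)-d(v)\geqslant q(G)-(n-n_j)-c_0\geqslant (1-\frac1r)n-O(\sqrt{\epsilon}n)$, this yields $|x_v-b_j|=O(1/n)$.

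\textbf{Step 3: compare different parts.} For $v\in B_j$ we have $(q(G)-(n-n_j))b_j=S-\sum_{u\in V_j}x_u$, where $S=\sum_{u}x_u$. By Steps 1–2, $\sum_{u\in V_j}x_u=n_jb_j+O(1)$, so
\[
(q(G)-n+2n_j)\,b_j=S+O(1).
\]
By Lemma~\ref{the size of each partition}, $n_j=\frac nr+O(1)$ for every $j$, so the coefficients $q(G)-n+2n_j$ all equal $n+O(1)$ (recall $q=2(1-\frac1r)n+O(1)$). Hence $b_j=\frac{S+O(1)}{n+O(1)}$, and therefore $b_j/b_{j'}=1+O(1/n)$, using that $S\geqslant b_{j}\cdot\Omega(n)$ is of order $n$ (it is at least the sum over one part, with $b\geqslant$ something $\Theta(1)$, which follows from $S/n\approx b_j$ for the part containing $u^*$). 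Combining with Step 2, every entry is within $O(1/n)$ of $x_{u^*}=1$. This gives $x_w>1-\frac{c_3}{n}$ with $c_3$ depending only on $r$ and $c_0$.

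The main obstacle is bookkeeping the $O(1)$ error terms uniformly. The constants coming from $e(G_{out})$, $|A_j|$ and $c_2$ must not depend on $n$. Care is also needed at $u^*$ itself, which may lie in some $A_i$; Step 2 handles this because it gives $b_i\geqslant 1-O(1/n)$, which then propagates to all other parts through Step 3.
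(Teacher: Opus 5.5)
Your proof is correct, but it is organized differently from the paper's.

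\textbf{The paper's route.} The paper takes a maximal entry $x_v=1$ and a minimal entry $x_u$ and subtracts their eigen-equations. It then splits into two cases: $u,v$ in the same part, or in different parts. In the cross-part case a term $|V_j|(x_v-x_u)$ appears on the right-hand side and has to be absorbed into the left. This forces the authors to show that $q(G)-d(u)-\frac{n+(r-1)^2c_2}{r}$ is of order $n$. That quantity is roughly $(1-\frac{2}{r})n$, and the step is justified by an ad hoc split on $d(u)$ that invokes $r\geqslant 3$.

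\textbf{Your route.} You use the structure from Corollary~\ref{property of edge maximal} more fully. First, the vertices of $B_j$ are twins with neighbourhood exactly $V(G)\setminus V_j$, so their entries are exactly equal to a common value $b_j$. Second, the $O(1)$ exceptional vertices of $A_j$ are handled by perturbing the eigen-equation, with the large denominator $q(G)-d(v)\approx(1-\frac1r)n$. Third, the cross-part comparison becomes the identity $(q(G)-n+2n_j)b_j=S+O(1)$.

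\textbf{What each buys.} Your version has no absorption step and gives a more structural picture: all entries agree up to $O(1/n)$. The hypothesis $r\geqslant 3$ enters only through the near-balance $n_j=\frac nr+O(1)$ from Lemma~\ref{the size of each partition}. The paper's version is more direct, needs no twin classes, and is shorter to write down.

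\textbf{A simplification.} In Step 3 you need neither $S=\Theta(n)$ nor the two-sided estimate $q(G)=2(1-\frac1r)n+O(1)$. Write $c_j=q(G)-n+2n_j$. Then:
\begin{itemize}
\item $c_jb_j-c_{j'}b_{j'}=O(1)$, since both equal $S+O(1)$;
\item $|c_j-c_{j'}|=2|n_j-n_{j'}|=O(1)$;
\item $c_j>2n_j=\Omega(n)$, because $q(G)>n$.
\end{itemize}
Since $b_{j'}\leqslant 1$, the identity $c_j(b_j-b_{j'})=(c_jb_j-c_{j'}b_{j'})-(c_j-c_{j'})b_{j'}$ gives $|b_j-b_{j'}|=O(1/n)$ directly. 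This also removes the slightly circular-sounding justification you give for $S$ being of order $n$, although your final remark that $b_i\geqslant 1-O(1/n)$ for the part containing $u^*$ does make that justification sound.
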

\begin{proof}
%Firstly, we show that $x_{u}\geqslant  1-\frac{c_{3}}{n}$ for any $u\in V(G).$
Since $T_{r}(n)\in \mathcal{G}_{n},$ we have        $q(G)=q(\mathcal{G}_{n})\geqslant  q(T_{r}(n))> 2(1-\frac{1}{r})n-\frac{(r+1)^2}{n-2}>n$ as $r\geqslant3.$
By %Theorem~\ref{jin-main lemma} and
Lemma~\ref{the size of each partition},
%$G=G_{0}\cup G_{in},$ where $G_{0}\subseteq K_{n_1,\cdots,n_{r}},$  $G_{in}\subseteq K_{n_1,\cdots,n_{r}}^{c}$ and
%$n_{1}\geqslant \cdots \geqslant n_{r}\geqslant \frac{n-(r-1)c_{2}}{r}.$
$\frac{n+(r-1)^2c_2}{r}\geqslant n_{1} \geqslant \cdots \geqslant  n_{r}>\frac{n-(r-1)c_2}{r}.$
Let $x_{v}=\max_{w\in V(G)}\{x_{w}\}=1$ and $x_{u}=\min_{w\in V(G)}\{x_{w}\}.$
We divide it into two cases.

$\mathbf{Case~1}:$ $u,v\in V_{i}$ for some $i\in [r].$

By $q(G)x_{w}=(Q(G)x)_{w},$ we have
\begin{equation}
\left\{
\begin{array}{ll}
q(G)x_{v}=d(v)x_{v}+\sum\limits_{\substack{w\sim v \\w\in V_{i}} }x_{w}+\sum\limits_{\substack{w\sim v \\w\notin V_{i}} }x_{w} \leqslant  d(v)x_{v}+\sum\limits_{\substack{w\sim v\\w\in V_{i}}}x_{w}+\sum\limits_{w\notin V_{i}}x_{w} \nonumber\\
q(G)x_{u}=d(u)x_{u}+\sum\limits_{\substack{w\sim u \\w\in V_{i}} }x_{w}+\sum\limits_{\substack{w\sim u \\w\notin V_{i}} }x_{w} \geqslant  d(u)x_{u}+\sum\limits_{w\notin V_{i}}x_{w}-\sum\limits_{\substack{w\nsim u \\ w\notin V_{i}}}x_{w},
\end{array}
\right.
\end{equation}
which implies that
\begin{eqnarray*}
(q(G)-d(u))(x_{v}-x_{u})&\leqslant &(d(v)-d(u))x_{v}+\sum\limits_{\substack{ w\sim v\\ w\in V_{i}}}x_{w}+\sum\limits_{\substack{w\nsim u\\ w\notin V_{i}}}x_{w} \\&\leqslant & d(v)-d(u)+c_{0}+\sum\limits_{\substack{w\nsim u\\ w\notin V_{i}}}x_{w} \leqslant  d(v)-d(u)+c_{0}+d_{G_{out}}(u)
\\&\leqslant & d_{V_{i}}(v)+2d_{G_{out}}(u)+c_{0}\leqslant    (4r-2)c_{0},
\end{eqnarray*}
where the last second inequality holds as $d(v)\leqslant  d_{V_{i}}(v)+n-|V_{i}|$ and    $d(u)\geqslant n-|V_{i}|-d_{G_{out}}(u),$  the last inequality holds as $d_{G_{out}}(u)\leqslant  2(r-1)c_{0}$ by Corollary~\ref{property of edge maximal}.
Thus, by $r\geqslant 3$ and $n$ is sufficiently large, there exists a positive constant  $c_{3}'$ such that $x_{u}\geqslant  1-\frac{(4r-2)c_{0}}{q(G)-d(u)}\geqslant  1-\frac{c_{3}'}{q(G)} > 1-\frac{c_{3}'}{n}.$

$\mathbf{Case~2}:$ $u\in V_{i}, v\in V_{j}$ for some $i\neq j\in [r].$
Similarly, we have
\begin{equation}
\left\{
\begin{array}{ll}
q(G)x_{v} \leqslant  d(v)x_{v}+\sum\limits_{\substack{w\sim v \\w\in V_{j}}}x_{w}+\sum\limits_{w\in V_{i}}x_{w}+\sum\limits_{w\notin V_{i}\cup V_{j}}x_{w} \nonumber\\
q(G)x_{u} \geqslant  d(u)x_{u}+\sum\limits_{w\in V_{j}}x_{w}+\sum\limits_{w\notin V_{i}\cup V_{j}}x_{w}-\sum\limits_{\substack{w\nsim u\\w\notin V_{i}}}x_{w},
\end{array}
\right.
\end{equation}
which implies that
\begin{eqnarray*}
(q(G)-d(u))(x_{v}-x_{u})&\leqslant &(d(v)-d(u))x_{v}+\sum\limits_{\substack{ w\sim v\\ w\in V_{j}}}x_{w}+\sum_{w\in V_{i}}x_{w} -\sum_{w\in V_{j}}x_{w}+\sum\limits_{\substack{w\nsim u\\w\notin V_{i}}}x_{w}\\
&\leqslant & (d(v)-d(u))x_{v}+d_{V_{j}}(v)x_{v}+|V_{i}|x_{v}-|V_{j}|x_{u}+d_{G_{out}}(u)x_{v}\\
&\leqslant &  2(d_{V_{j}}(v)+d_{G_{out}}(u)+|V_{i}|-|V_{j}|)x_{v}+|V_{j}|(x_{v}-x_{u})\\
&\leqslant &  2c_{0}+4(r-1)c_{0}+2(r-1)c_{2}+\frac{n+(r-1)^2c_{2}}{r}(x_{v}-x_{u}),
\end{eqnarray*}
where the last second inequality holds as $d(v)\leqslant  d_{V_{j}}(v)+n-|V_{j}|$ and    $d(u)\geqslant n-|V_{i}|-d_{G_{out}}(u),$ the last  inequality holds as $|V_{i}|-|V_{j}|\leqslant  (r-1)c_{2},$  $|V_{j}|\leqslant  \frac{n+(r-1)^2c_{2}}{r}$ and $d_{G_{out}}(u)\leqslant  2(r-1)c_{0}$ by Corollary~\ref{property of edge maximal}.  % by Lemma~\ref{the size of each partition}.

Since $q(G)x_{u} = d(u)x_{u}+\sum\limits_{w\sim u }x_{w}\geqslant 2d(u)x_{u},$ we have $q(G)\geqslant 2d(u).$
If $d(u)\geqslant \frac{3}{4}n,$ by $r\geqslant 3$ and $n$ is sufficiently large, then $$q(G)> d(u)+\frac{n+(r-1)^2c_{2}}{r}.$$  If $d(u)< \frac{3}{4}n,$  by $q(G)>  2(1-\frac{1}{r})n-\frac{(r+1)^2}{n-2},$ $r\geqslant 3$ and $n$ is sufficiently large, then $q(G)> d(u)+\frac{n+(r-1)^2c_{2}}{r}.$
Thus, by $r\geqslant 3$ and $n$ is sufficiently large, there exists a positive constant   $c_{3}''$ such that $$x_{u}\geqslant  1-\frac{2c_{0}+4(r-1)c_{0}+2(r-1)c_{2}}{q(G)-d(u)-\frac{n+(r-1)^2c_{2}}{r}}\geqslant  1-\frac{c_{3}''}{q(G)} > 1-\frac{c_{3}''}{n}.$$

Let $c_{3}=\max\{c_{3}',c_{3}''\}.$ Then $x_{u}> 1-\frac{c_{3}}{n}.$
%Thus, ? $x_{u}\geqslant  1-\frac{1}{\sqrt{n}}\geqslant 1-\frac{c_{1}}{q(G)}.$
%\textcolor{purple}{Need to add this part.} Now we prove $e(G_{out})\leqslant  c_{2}.$
%By way of contradiction. If $e(G_{out})> c_{2}.$
%Since $G=G_{0}\cup G_{in},$ we have
%\begin{eqnarray*}
%q(T_{r}(n))&\leqslant & q(G)=x^{T}Q(G)x=\sum_{ij\in E(G)}(x_{i}+x_{j})^2\\
%&=&\sum_{ij\in E(K_{n_{1},\cdots,n_{r}})}(x_{i}+x_{j})^2+\sum_{ij\in E(G_{in})}(x_{i}+x_{j})^2-\sum_{ij\in E(G_{out})}(x_{i}+x_{j})^2\\
%&\leqslant & q(T_{r}(n))+4e(G_{in})-4e(G_{out})x_{min}^2\\
%&\leqslant & q(T_{r}(n))+4e(G_{in})-4e(G_{out})\left(1-\frac{c_{1}}{n}\right)^2\\
%&\leqslant & q(T_{r}(n))+4rc_{0}-4e(G_{out})\left(1-\frac{c_{1}}{n}\right)^2 < q(T_{r}(n)),
%\end{eqnarray*}
%where $e(G_{in})=\sum_{i=1}^{r}e(G[V_{i}])\leqslant  rc_{0}.$
\end{proof}

\begin{lemma}\label{turan graph}
%Let %$G\in \mathcal{G}_{n},$  $q(G)=q(\mathcal{G}_{n})$ and
%$r\geqslant 3.$
%where $G=G_{0}\cup G_{1}$ and $G_{0}\subseteq K_{n_1,\cdots,n_{r}}.$
 If $n$ is  sufficiently large,
 then $|n_{i}-n_{j}|\leqslant  1$
for any $1\leqslant  i< j\leqslant  r.$
\end{lemma}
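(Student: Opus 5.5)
We argue by contradiction. Suppose $n_1-n_r\geqslant 2$ (with $n_1\geqslant\cdots\geqslant n_r$, as in Lemma~\ref{the size of each partition}). The aim is to show $q(G)<q(T_r(n))$. Since $T_r(n)\in\mathcal{G}_n$ and $q(G)=q(\mathcal{G}_n)$, this is a contradiction. The idea is to compare $q(G)$ with $q(K_{n_1,\cdots,n_r})$ up to an error of order $o(1/n)$, and then use the gap $\frac{2(r-2)}{r^2n}$ from Lemma~\ref{difference of two type of partitions}(ii).

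\textbf{Step 1: expansion of $q(G)$.} Let $\mathbf{x}$ be the Perron vector of $G$ normalized as in Lemma~\ref{the lower bound of minimum component}, so every coordinate satisfies $1-\frac{c_3}{n}<x_w\leqslant 1$. Since $G=G_0\cup G_{in}$, we can write
\[
\mathbf{x}^TQ(G)\mathbf{x}=\mathbf{x}^TQ(K_{n_1,\cdots,n_r})\mathbf{x}+\sum_{ij\in E(G_{in})}(x_i+x_j)^2-\sum_{ij\in E(G_{out})}(x_i+x_j)^2 .
\]
Each term $(x_i+x_j)^2$ lies in $\bigl(4(1-\frac{c_3}{n})^2,4\bigr]$. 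Moreover $e(G_{in})\leqslant rc_0$ and $e(G_{out})\leqslant 2(r-1)rc_0^2$ by Corollary~\ref{property of edge maximal}. Together with $e(G_{in})-e(G_{out})\leqslant c_0$, this gives
\[
\mathbf{x}^TQ(G)\mathbf{x}\leqslant \mathbf{x}^TQ(K_{n_1,\cdots,n_r})\mathbf{x}+4c_0+\frac{C}{n}
\]
for some constant $C$.

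\textbf{Step 2: a sharper bound.} Step 1 alone is too weak, because the additive error $4c_0$ is much larger than the gap $\frac{2(r-2)}{r^2n}$. Normalizing by $\|\mathbf{x}\|^2\approx n$ fixes this: dividing by $\mathbf{x}^T\mathbf{x}\geqslant n(1-\frac{c_3}{n})^2$ gives
\[
q(G)=\frac{\mathbf{x}^TQ(G)\mathbf{x}}{\mathbf{x}^T\mathbf{x}}\leqslant q(K_{n_1,\cdots,n_r})+\frac{4c_0}{n}+O\!\left(\frac{1}{n^2}\right).
\]
The term $\frac{4c_0}{n}$ is still of the same order as the gap, so this is where the real work lies.

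\textbf{Step 3: exploiting $\mathbf{x}$ in the right direction.} I would avoid the crude Rayleigh bound on $K$ and instead compare both sides with the same quotient. The cleaner route is to compare $G$ with the graph $G^*=T_r(n)\cup(\text{the same }G_{in}\text{ pattern})\setminus(\text{the same }G_{out}\text{ pattern})$. This is done by moving vertices of $B_i$ (which are fully joined outside $V_i$, by Corollary~\ref{property of edge maximal}) from a largest part $V_1$ to a smallest part $V_r$ until the parts are balanced. Each move keeps $G^*$ inside $\mathcal{G}_n$ and $F$-free (the $F$-freeness is checked as in Corollary~\ref{corollary}: only $O(1)$ special vertices are involved, and $e(G^*)$ stays $\leqslant t_r(n)+c_0$ with the same local structure).

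For a single move of $u\in B_1$ to $V_r$, the first-order change is
\[
\mathbf{x}^T\bigl(Q(G')-Q(G)\bigr)\mathbf{x}\approx \sum_{w\in V_1}(x_u+x_w)^2-\sum_{w\in V_r}(x_u+x_w)^2 .
\]
Using the coordinate bounds and the eigen-equations, as in the derivation of (\ref{the eigencomponent of x}), this is at least $4(n_1-n_r-1)-O(1)$ up to lower-order terms, hence positive, since $n_1-n_r\geqslant2$ and the gap is amplified by the factor $n_1-n_r$. The part I expect to need most care is controlling the $O(1)$ errors from $G_{in}$ and $G_{out}$ edges at $u$. Choosing $u\in B_1$ removes most of them, because $u$ has no inside edges and no missing cross edges.

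\textbf{Step 4: conclusion.} Step 3 gives $q(G')>q(G)=q(\mathcal{G}_n)$ with $G'\in\mathcal{G}_n$, which is a contradiction. If the $F$-freeness of $G'$ cannot be verified directly, the fallback is Step 2 combined with Lemma~\ref{difference of two type of partitions}(ii), sharpened by showing that the coordinate deficits cancel the $\frac{4c_0}{n}$ term.
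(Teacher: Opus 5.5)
Your Steps 1--2 are correct and coincide with the paper's upper bound $q(G)\leqslant q(K_{n_1,\cdots,n_r})+\frac{4c_0}{n}+O(n^{-2})$. Step 3, however, fails exactly in the case that matters.

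Since $G$ is $F$-saturated, every vertex of $B_i$ has neighbourhood $V(G)\setminus V_i$. So $\mathbf{x}$ is constant on $B_i$, say equal to $\beta_i$, and the eigen-equations give $(q(G)-n+2n_i)\beta_i=\sum_w x_w+O(n^{-1})$. Hence $\beta_1=\beta_r\bigl(1-\frac{2(n_1-n_r)}{n}\bigr)+O(n^{-2})$. This deficit, summed over about $n/r$ terms, is of order one. For $u\in B_1$ you get
\[
\sum_{w\in V_1\setminus\{u\}}(x_u+x_w)^2-\sum_{w\in V_r}(x_u+x_w)^2=4\Bigl((n_1-n_r)\Bigl(1-\frac{2}{r}\Bigr)-1\Bigr)+O\Bigl(\frac1n\Bigr).
\]
So the ``$O(1)$'' you subtract from $4(n_1-n_r-1)$ is $\frac{8(n_1-n_r)}{r}$, which is not small. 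For $r=3$ and $n_1-n_r=2$ the expression equals $-\frac43+O(\frac1n)<0$, and for $(r,n_1-n_r)\in\{(3,3),(4,2)\}$ it vanishes to leading order. The Rayleigh quotient with the old eigenvector therefore cannot certify $q(G')>q(G)$ in the borderline case $n_1-n_r=2$, which Lemma~\ref{the size of each partition} does not exclude. The actual increase, of order $\frac1n$, comes from the readjustment of the eigenvector. Capturing it would need a genuinely two-vector comparison, as in the proof of Lemma~\ref{difference of two type of partitions}(i).

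Your justification of $F$-freeness of the moved graph via edge counts is also not an argument. It can be repaired: the moved vertex becomes a non-adjacent twin of the many vertices of $B_r$, so any copy of $F$ through it can be rerouted through an unused vertex of $B_r$.

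The fallback in Step 4 does not work either. The term $\frac{4c_0}{n}$ in Step 2 is not an artifact: to first order, $q(G)-q(K_{n_1,\cdots,n_r})$ really equals $\frac{4(e(G_{in})-e(G_{out}))}{n}$. So no sharpening of the upper bound can remove it. What is missing is a matching \emph{lower} bound that carries the same term; comparing $G$ with $T_r(n)$ alone throws it away.

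The paper obtains this lower bound from an edge-extremal graph $H\in Ex(n,F)$. By Corollary~\ref{corollary}, $H=H_0\cup H_{in}$ with $H_0\subseteq T_r(n)$, $e(H_{in})-e(H_{out})=c_0$ and $\delta(H)\geqslant\lfloor(1-\frac1r)n\rfloor$. Hence $H\in\mathcal{G}_n$ and $q(G)\geqslant q(H)$. The entries of the Perron vector of $T_r(n)$ are at least $1-\frac2n$, and $e(H_{in}),e(H_{out})=O(1)$. Testing $Q(H)$ on this vector therefore gives $q(H)\geqslant q(T_r(n))+\frac{4c_0}{n}-O(n^{-2})$. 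Together with your Step 2 this yields $q(T_r(n))-q(K_{n_1,\cdots,n_r})=O(n^{-2})$. That contradicts the gap $\frac{2(r-2)}{r^2n}$ from Lemma~\ref{difference of two type of partitions}(ii).
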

\begin{proof}
We prove it by way of contradiction. Then there exist two integers $1\leqslant  i,j\leqslant  r$ such that $|n_{i}-n_{j}|\geqslant 2.$
%By Theorem~\ref{jin-main lemma}, we have $G=G_{0}\cup G_{in},$ where $G_{0}\subseteq K_{n_1,\cdots,n_{r}}$ and $G_{in}\subseteq K_{n_1,\cdots,n_{r}}^{c}.$

%By (ii) in Lemma~\ref{difference of two type of partitions}, we have $q(T_{r}(n))> q(K)+\frac{2(r-2)}{r^2n}.$

Let $\mathbf{x}$ be the  Perron vector of $T_{r}(n)$ corresponding to $q(T_{r}(n))$ and $a=n-r\lfloor\frac{n}{r}
\rfloor.$
If $a=0,$ then $T_{r}(n)$ is a regular graph and $x_{1}=\cdots=x_{n}=1.$
If $a\neq 0,$
%since  $T_{r}(n)$ is a complete $r$-partite graph on $n$ vertices where each partite set has either $\lfloor\frac{n}{r} \rfloor$ or $\lceil\frac{n}{r} \rceil$ vertices,
then we assume
$\mathbf{x}=(\underbrace{x_{1},\cdots,x_{1}}_
{a\left\lceil{\frac{n}{r}}\right\rceil},
\underbrace{x_{2},\cdots,x_{2}}_{n-a\left\lceil{\frac{n}{r}}
\right\rceil})^{\mathrm{T}}.$
Thus
$$q(T_{r}(n))x_{1}=(n-\lceil\frac{n}{r}\rceil)x_{1}+(a-1)\lceil\frac{n}{r}\rceil x_{1}+(n-a\lceil\frac{n}{r}\rceil)x_{2}$$
and
$$q(T_{r}(n))x_{2}=(n-\lfloor\frac{n}{r}\rfloor)x_{2}+a\lceil\frac{n}{r}\rceil x_{1}+(n-a\lceil\frac{n}{r}\rceil-\lfloor\frac{n}{r}\rfloor)x_{2},$$
which implies that
$$\left(q(T_{r}(n))-n+2\lceil\frac{n}{r}\rceil\right)x_{1}=\left(q(T_{r}(n))-n+2\lfloor\frac{n}{r}\rfloor\right)x_{2}.$$
%Without loss of generality, we assume that $x_{2}=1.$
Then
$$x_{2}\geqslant x_{1}= \frac{q(T_{r}(n))-n+2\lfloor\frac{n}{r}\rfloor}{q(T_{r}(n))-n+2\lceil\frac{n}{r}\rceil}= 1-\frac{2}{q(T_{r}(n))-n+2\lceil\frac{n}{r}\rceil}.$$
Combining with $q(T_{r}(n))\geqslant  2\delta(T_{r}(n))\geqslant  2(n-\lceil\frac{n}{r}\rceil),$ we have
$x_{1}\geqslant 1-\frac{2}{n}.$

On the one hand, denote by $H$ is an $F$-free graph of order $n$ with $e(H)=ex(n,F).$
By the proof of Corollary~\ref{corollary}, we have $H=H_{0}\cup H_{in},$ $H_{0}\subseteq T_{r}(n),$ $H_{in}\subseteq T_{r}^{c}(n),$ $e(H_{in})-e( H_{out})=c_{0}$ and $\delta(H)\geqslant  \lfloor(1-\frac{1}{r})n\rfloor.$ Then  $H\in \mathcal{G}_{n}$   as  $\delta(H)\geqslant \lfloor(1-\frac{1}{r})n\rfloor =  n-\lceil\frac{n}{r}\rceil$.  Thus,
\begin{eqnarray}\label{upper bound1}
&&q\left(G\right)\geqslant q\left(H\right)\geqslant\frac{\mathbf{x}^{T}Q\left(H\right)\mathbf{x}}{\mathbf{x}^{T}\mathbf{x}} \nonumber
\\
&=& \frac{\mathbf{x}^{T}Q\left(T_{r}(n)\right)\mathbf{x}}{\mathbf{x}^{T}\mathbf{x}}+\frac{\sum_{ij\in E\left(H_{in}\right)}\left(x_{i}+x_{j}\right)^{2}}{\mathbf{x}^{T}\mathbf{x}}-\frac{\sum_{ij\in E\left(H_{out}\right)}\left(x_{i}+x_{j}\right)^{2}}{\mathbf{x}^{T}\mathbf{x}}\nonumber\\
&=&q\left(T_{r}(n)\right)+\frac{\sum_{ij\in E\left(H_{in}\right)}\left(x_{i}+x_{j}\right)^{2}}{\mathbf{x}^{T}\mathbf{x}}-\frac{\sum_{ij\in E\left(H_{out}\right)}\left(x_{i}+x_{j}\right)^{2}}{\mathbf{x}^{T}\mathbf{x}}\nonumber\\
&\geqslant& q\left(T_{r}(n)\right)+\frac{4e(H_{in})}{\mathbf{x}^{T}\mathbf{x}}x_{min}^{2}-\frac{4e(H_{out})}{\mathbf{x}^{T}\mathbf{x}}\geqslant q\left(T_{r}(n)\right)+\frac{4e(H_{in})}{\mathbf{x}^{T}\mathbf{x}}\left(1-\frac{2}{n}\right)^2-\frac{4e(H_{out})}{\mathbf{x}^{T}\mathbf{x}}\nonumber\\
&\geqslant& q\left(T_{r}(n)\right)+\frac{4e(H_{in})}{\mathbf{x}^{T}\mathbf{x}}\left(1-\frac{4}{n}\right)-\frac{4e(H_{out})}{\mathbf{x}^{T}\mathbf{x}}\geqslant q\left(T_{r}(n)\right)+\frac{4e(H_{in})}{n}\left(1-\frac{4}{n}\right)-\frac{4e(H_{out})}{n(1-\frac{2}{n})^2}\nonumber\\
&\geqslant&  q\left(T_{r}(n)\right)+\frac{4(e(H_{in})-e(H_{out}))}{n}+\frac{4e(H_{out})}{n}-\frac{16e(H_{in})}{n^2}-\frac{4e(H_{out})}{n-4} \nonumber\\
&=&
q\left(T_{r}(n)\right)+\frac{4c_{0}}{n}-\frac{16e(H_{out})}{n(n-4)}-\frac{16(e(H_{out})+c_{0})}{n^2}\geqslant
q\left(T_{r}(n)\right)+\frac{4c_{0}}{n}-\frac{16(2e(H_{out})+c_{0})}{n(n-4)}\nonumber\\
&\geqslant & q\left(T_{r}(n)\right)+\frac{4c_{0}}{n}-\frac{16c_{0}(4(r-1)rc_{0}+1)}{n(n-4)},
\end{eqnarray}
where the sixth inequality holds as $x_{min}\geqslant 1-\frac{2}{n}$ %the last third equality holds by Corollary~\ref{corollary}
and the last  inequality holds as $e(G_{out})\leqslant  2(r-1)rc_{0}^2$ by Corollary%~\ref{corollary} and
~\ref{property of edge maximal}.

On the other hand, let $K=K_{n_1,\cdots,n_{r}}$ and  $\mathbf{y}$ be the Perron vector of $G$ corresponding to $q(G).$ %Assume that $\max_{i\in V(G)}\{y_{i}\}=1.$
Since %$G=G_{0}\cup G_{in}$ by Theorem~\ref{jin-main lemma} and
$y_{i}> 1-\frac{c_{3}}{n}$ for all $i\in V(G)$ by Lemma~\ref{the lower bound of minimum component}, we have
\begin{eqnarray}\label{upper bound2}
q(G)&=&\frac{\mathbf{y}^{T}Q(G)\mathbf{y}}{\mathbf{y}^{T}\mathbf{y}}=\frac{\mathbf{y}^{T}Q(K)\mathbf{y}}{\mathbf{y}^{T}\mathbf{y}}+\frac{\sum_{ij\in E(G_{in})}(y_{i}+y_{j})^2}{\mathbf{y}^{T}\mathbf{y}}-\frac{\sum_{ij\in E(G_{out})}(y_{i}+y_{j})^2}{\mathbf{y}^{T}\mathbf{y}} \nonumber\\
&\leqslant  &  \frac{\mathbf{y}^{T}Q(K)\mathbf{y}}{\mathbf{y}^{T}\mathbf{y}}+\frac{4e(G_{in})}{\mathbf{y}^{T}\mathbf{y}}-\frac{4e(G_{out})(1-\frac{c_{3}}{n})^2}{\mathbf{y}^{T}\mathbf{y}}\nonumber\\
%&\leqslant  &  q(K)+\frac{4e(G_{in})}{\mathbf{y}^{T}\mathbf{y}}-\frac{4e(G_{out})(1-\frac{c_{3}}{n})^2}{\mathbf{y}^{T}\mathbf{y}}
%\nonumber\\
&\leqslant  &  q(K)+\frac{4(e(G_{in})-e(G_{out}))}{\mathbf{y}^{T}\mathbf{y}}+\frac{4e(G_{out})\left(\frac{2c_{3}}{n}-\frac{c_{3}^2}{n^2}\right)}{\mathbf{y}^{T}\mathbf{y}}
\nonumber\\
&< &  q(K)+\frac{4c_{0}}{n-2c_{3}}+\frac{16(r-1)rc_{0}^2c_{3}}{n(n-2c_{3})},
\end{eqnarray}
where the last inequality holds as $e(G_{in})-e(G_{out})\leqslant  c_{0},$  $e(G_{out})\leqslant  2(r-1)rc_{0}^2$ and $\mathbf{y}^{T}\mathbf{y}> n\left(1-\frac{c_{3}}{n}\right)^2>n-2c_{3}.$

According to  (\ref{upper bound1}) and (\ref{upper bound2}), we have
\begin{eqnarray}\label{combining upper bound}
q(T_{r}(n))-q(K)&<& \frac{4c_{0}}{n-2c_{3}}+\frac{16(r-1)rc_{0}^2c_{3}}{n(n-2c_{3})}-\frac{4c_{0}}{n}+\frac{16c_{0}(4(r-1)rc_{0}+1)}{n(n-4)}\nonumber\\
&= & \frac{8c_{0}c_{3}}{n(n-2c_{3})}+\frac{16(r-1)rc_{0}^2c_{3}}{n(n-2c_{3})}+\frac{16c_{0}(4(r-1)rc_{0}+1)}{n(n-4)} \leqslant  \frac{c_{4}}{n^2},
\end{eqnarray}
where $c_{4}$ is a positive constant  and the last inequality holds as   $n$ is sufficiently large.

By Lemma~\ref{difference of two type of partitions}, we have $q(T_{r}(n))> q(K)+\frac{2(r-2)}{r^2n}.$
Combining with (\ref{combining upper bound}), we have
$$\frac{2(r-2)}{r^2n}< q(T_{r}(n))-q(K)< \frac{c_{4}}{n^2},$$
a contradiction as $n$ is sufficiently large. We complete the proof.
\end{proof}
%\textcolor{red}{The next lemma is the key to the Main theorem.}
\begin{lemma}\label{edge extremal graph}
%Let $G\in \mathcal{G}_{n}$ and $q(G)=q(\mathcal{G}_{n}).$
%where $G=G_{0}\cup G_{1}$ and $G_{0}\subseteq K_{n_1,\cdots,n_{r}}.$
If $n$ is  sufficiently large,
 then $e(G)=ex(n,F).$
\end{lemma}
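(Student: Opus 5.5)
By Lemma~\ref{turan graph}, $K=K_{n_1,\cdots,n_r}=T_r(n)$, so $G=G_0\cup G_{in}$ with $G_0\subseteq T_r(n)$. Hence $e(G)=t_r(n)+e(G_{in})-e(G_{out})$, and Theorem~\ref{jin-main lemma}(ii) gives $e(G_{in})-e(G_{out})\leqslant c_0$. Since $ex(n,F)=t_r(n)+c_0$ for large $n$ (Remark~\ref{c0}), it suffices to rule out $e(G_{in})-e(G_{out})\leqslant c_0-1$. I argue by contradiction, comparing $q(G)$ with $q(H)$ for an extremal graph $H\in Ex(n,F)$.

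\textbf{Lower bound on $q(G)$.} From inequality (\ref{upper bound1}) in the proof of Lemma~\ref{turan graph},
$$q(G)\geqslant q(H)\geqslant q(T_r(n))+\frac{4c_0}{n}-\frac{c_5}{n^2}$$
for some constant $c_5$. This uses only that the Perron vector of $T_r(n)$ has entries at least $1-\frac{2}{n}$ and that $e(H_{out})=O(1)$. The latter holds because $H$ is $F$-saturated, so Corollary~\ref{property of edge maximal} applies.

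\textbf{Upper bound on $q(G)$.} Repeat (\ref{upper bound2}) with $K=T_r(n)$, now using the sharper hypothesis $e(G_{in})-e(G_{out})\leqslant c_0-1$. Take the Perron vector $\mathbf{y}$ of $G$ with entries $y_i>1-\frac{c_3}{n}$ (Lemma~\ref{the lower bound of minimum component}) and $\mathbf{y}^T\mathbf{y}>n-2c_3$. Then
$$q(G)\leqslant q(T_r(n))+\frac{4(c_0-1)}{n-2c_3}+\frac{16(r-1)rc_0^2c_3}{n(n-2c_3)}\leqslant q(T_r(n))+\frac{4(c_0-1)}{n}+\frac{c_6}{n^2}.$$
Here I use $\mathbf{y}^TQ(T_r(n))\mathbf{y}/\mathbf{y}^T\mathbf{y}\leqslant q(T_r(n))$ and $e(G_{out})\leqslant 2(r-1)rc_0^2$, which holds because $G$ is $F$-saturated. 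Comparing the two bounds gives $\frac{4}{n}\leqslant \frac{c_5+c_6}{n^2}$, which is false for large $n$. Hence $e(G_{in})-e(G_{out})=c_0$ and $e(G)=t_r(n)+c_0=ex(n,F)$.

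\textbf{Main obstacle.} The delicate point is the error bookkeeping. The bounds must differ by a main term $4/n$, with all corrections $O(1/n^2)$. This works because the perturbations $e(G_{in})$, $e(G_{out})$, $e(H_{in})$, $e(H_{out})$ are all bounded by constants and both Perron vectors are $1-O(1/n)$ entrywise. If $c_0=0$, the upper bound reads $e(G_{in})-e(G_{out})\leqslant -1$ and the same comparison goes through unchanged. One more check: $H\in\mathcal{G}_n$, which holds because $\delta(H)\geqslant\lfloor(1-\frac1r)n\rfloor$ by Corollary~\ref{corollary}. This justifies $q(G)\geqslant q(H)$ by the choice $q(G)=q(\mathcal{G}_n)$.
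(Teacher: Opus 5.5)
Your argument is correct, but it takes a different route from the paper. The paper never passes through $q(T_r(n))$ here. It places an extremal graph $H$ on the same partition $V_1,\dots,V_r$ as $G$ (both are $T_r(n)$ up to $O(1)$ edges). It then sets $E_1=E(G)\setminus E(H)$ and $E_2=E(H)\setminus E(G)$, and notes that $|E_2|\geqslant |E_1|+1$ and $|E_1|\leqslant rc_0+2(r-1)rc_0^2$, since $E_1\subseteq E(G_{in})\cup E(H_{out})$. Plugging the Perron vector $\mathbf{x}$ of $G$ into the Rayleigh quotient of $Q(H)$ gives $q(H)\geqslant q(G)+\frac{4}{\mathbf{x}^T\mathbf{x}}\bigl((|E_1|+1)x_{\min}^2-|E_1|\bigr)>q(G)$, which is a one-step switching argument.

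You instead sandwich $q(G)$ between the lower bound (\ref{upper bound1}) and a sharpened version of (\ref{upper bound2}). The lemma thus becomes a corollary of the two estimates already proved for Lemma~\ref{turan graph}: each missing edge costs $4/n$, while all errors are $O(1/n^2)$. Your route needs no new computation and no alignment of $H$ with $G$'s partition. It does rely on two Perron vectors, the reference value $q(T_r(n))$, and the exact identity $e(H_{in})-e(H_{out})=c_0$. The paper's route uses only $x_{\min}\geqslant 1-\frac{c_3}{n}$ for $G$ and a bound on $|E_1|$, and it gives the strict inequality $q(H)>q(G)$ directly without second-order bookkeeping.

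One small correction concerns the case $c_0=0$, where your hypothesis reads $e(G_{in})-e(G_{out})\leqslant -1$. There the intermediate bound $\frac{4(c_0-1)}{n-2c_3}$ is not justified, because dividing a negative numerator by the lower bound $\mathbf{y}^T\mathbf{y}>n-2c_3$ goes the wrong way. Use $\mathbf{y}^T\mathbf{y}\leqslant n$ in that case instead, which gives $\frac{4(e(G_{in})-e(G_{out}))}{\mathbf{y}^T\mathbf{y}}\leqslant -\frac{4}{n}$. Your final bound $q(T_r(n))+\frac{4(c_0-1)}{n}+\frac{c_6}{n^2}$ and the conclusion are therefore unaffected.
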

\begin{proof}
We prove it by way of contradiction. Suppose that $e(G)\leqslant  ex(n,F)-1.$ Let $H\in Ex(n,F)$ with $V(H)=V(G).$ Then $e(H)=ex(n,F)=t_{r}(n)+c_{0}$ by Remark~\ref{c0}.
 According to Corollaries~\ref{corollary} and \ref{property of edge maximal}, we have $H=H_{0}\cup H_{in}$ and $\delta(H)\geqslant  \lfloor(1-\frac{1}{r})n\rfloor,$ where  $H_{0}\subseteq T_{r}(n)$ and $H_{in}\subseteq T_{r}^{c}(n).$
Hence, $H\in \mathcal{G}_{n}.$ In the following, we  prove that $q(H)> q(G).$
By Lemma~\ref{turan graph}, $K_{n_1,\cdots,n_{r}}=T_{r}(n).$
%Let $V_{1},\cdots,V_{r}$ be a partition of $G.$

Let $E_{1}=E(G)\setminus E(H)$ and $E_{2}=E(H)\setminus E(G).$ Then $E(H)=(E(G)\cup E_{2})\setminus E_{1}$ and $|E_{2}|\geqslant |E_{1}|+1.$ By Corollaries~\ref{corollary} and \ref{property of edge maximal}, we have
\begin{eqnarray}\label{the size of E1}
|E_{1}|\leqslant  \sum_{i=1}^{r}e(V_{i})+ e(H_{out})\leqslant  rc_{0}+2(r-1)rc_{0}^2.%\binom{}{}\sum_{1\leqslant  i<j\leqslant  r} |A_{i}||A_{j}|.
\end{eqnarray}
Let $\mathbf{x}$ be the  Perron vector of $G$ corresponding to $q(G).$ Then for sufficiently large $n,$ we have
\begin{eqnarray*}
q(H)&\geqslant& \frac{\mathbf{x}^{T}Q(H)\mathbf{x}}{\mathbf{x}^{T}\mathbf{x}}= \frac{\mathbf{x}^{T}Q(G)\mathbf{x}}{\mathbf{x}^{T}\mathbf{x}}+\frac{\sum_{ij\in E_{2}}(x_{i}+x_{j})^2}{\mathbf{x}^{T}\mathbf{x}}-\frac{\sum_{ij\in E_{1}}(x_{i}+x_{j})^2}{\mathbf{x}^{T}\mathbf{x}}\nonumber\\
&\geqslant & q(G)+\frac{4}{\mathbf{x}^{T}\mathbf{x}}(|E_{2}|x_{min}^2- |E_{1}|)\geqslant  q(G)+\frac{4}{\mathbf{x}^{T}\mathbf{x}}((|E_{1}|+1)x_{min}^2- |E_{1}|)\nonumber\\
&= & q(G)+\frac{4}{\mathbf{x}^{T}\mathbf{x}}(|E_{1}|+1)\left(x_{min}^2-1+ \frac{1}{|E_{1}|+1}\right)> q(G),
\end{eqnarray*}
where the last inequality holds by   (\ref{the size of E1}), $x_{min}\geqslant  1-\frac{c_{3}}{n}$ and $n$ is sufficiently large,
a contradiction to the assumption that $G$ has the maximum signless Laplacian spectral radius among $\mathcal{G}_{n}$. Therefore, $e(G)=ex(n,F).$
\end{proof}

Now we are ready to give the proof of Theorem~\ref{main result}.

\begin{proofof}~{\bf\ref{main result}}.
Let $G$ be a graph  with the maximum signless Laplacian spectral radius among all  $F$-free graphs of order $n$.
Since $ex(n,F)=t_{r}(n)+O(1),$ we have
$\pi(F)=1-\frac{1}{r}.$  Thus,  for sufficiently large $n,$ we have
$$\left|ex(n,F)-ex(n-1,F)-\pi(F)n\right|=\left|\left(1-\frac{1}{r}\right)\frac{n^2}{2}-\left(1-\frac{1}{r}\right)\frac{(n-1)^2}{2}-\left(1-\frac{1}{r}\right)n+O(1)\right|\leqslant  \sigma n,$$
which implies that the condition (\ref{the first inequality}) of Theorem~\ref{theorem of liyongtao} holds.
%Let $\mathcal{G}_{n}$ be the collection of all $n$-vertex $F$-free graphs $H$ with minimum degree $\delta(H)\geqslant  \left(\pi(F)-\epsilon\right)n.$
According to Lemma~\ref{turan graph}, we have $G$ is  a spanning subgraph of the graph obtained from $T_{r}(n)$ by adding at most $rc_{0}$ edges.
By Lemma~\ref{the upper bound of signless laplacian spectral radius} recursively,
$$q(\mathcal{G}_{n})=q(G)\leq q(T_{r}(n))+\frac{4rc_{0}(n-\lfloor \frac{n}{r}\rfloor)}{\left(\lfloor \frac{n}{r}\rfloor-2c_{0}\right)^2}.$$
Combining with  $q(\mathcal{G}_{n})\geqslant  q(T_{r}(n))$,  we have $q(\mathcal{G}_{n})=q(T_{r}(n))+o(1)=2\left(1-\frac{1}{r}\right)n+o(1).$ Then  for sufficiently large $n,$ we have
$$\left|q(\mathcal{G}_{n})-4ex(n,F)n^{-1}\right|=\left|q(T_{r}(n))-4ex(n,F)n^{-1}+o(1)\right|=o(1)\leqslant  \sigma,$$
which implies that the condition (\ref{the second inequality}) of Theorem~\ref{theorem of liyongtao} holds.
Thus, by Theorem~\ref{theorem of liyongtao}, we have $q(G)=q(\mathcal{G}_{n})$ and $G\in \mathcal{G}_{n}$ for sufficiently large $n.$
According to Lemma~\ref{edge extremal graph}, we have  $e(G)=ex(n,F),$ which implies that $G\in Ex(n,F).$
We complete the proof.
\end{proofof}

\begin{remark}\label{a solution to problem} For $k\geqslant 1,$  $t\geqslant 2$ and $n\geqslant 16k^3t^{8},$
Chen et al.%~ Gould, Pfender and Wei
~\cite{Chen Gould Pfender Wei 2003} proved an exact result for  $ex(n,F_{k,t})$  and also determined  the extremal graph in $Ex(n, F_{k,t}),$ which is obtained from $T_{t-1}(n)$ by adding $O(1)$ edges. While for $k\geqslant 1$ and $t\geqslant 3,$ Desai et al.~\cite{DKLNTW2022} proposed that $Ex_{ssp}(n,F_{k,t})=\{K_{k(t-2)}\vee \overline{K}_{n-k(t-2)}\}$ if $n$ is sufficiently large.
According to  Theorem~\ref{main result}, we provide a negative answer to this  problem. \end{remark}

\end{document}